\theoremstyle{plain}
\newtheorem{theorem}{\bf Theorem}[section]
\newtheorem{proposition}[theorem]{\bf Proposition}
\newtheorem{lemma}[theorem]{\bf Lemma}
\newtheorem{corollary}[theorem]{\bf Corollary}
\theoremstyle{definition}
\newtheorem{example}[theorem]{\bf Example}
\newtheorem{remark}[theorem]{\bf Remark}
\newcommand{\N}{\mathbb N}
\newcommand{\Z}{\mathbb Z}
\newcommand{\R}{\mathbb R}
\newcommand{\Q}{\mathbb Q}
\newcommand{\LK}{\,[\![}
\newcommand{\RK}{]\!]}
\DeclareMathOperator{\spec}{spec}\DeclareMathOperator{\supp}{supp}
\DeclareMathOperator{\Pic}{Pic}
\DeclareMathOperator{\Int}{Int}
\renewcommand{\time}{\negthinspace\times\negthinspace}
\newcommand{\red}{{\text{\rm red}}}
\newcommand{\mon}{\text{\rm mon}}
\newcommand{\BF}{\text{\rm BF}}
\newcommand{\FF}{\text{\rm FF}}
\newcommand{\DP}{\negthinspace :\negthinspace}
\numberwithin{equation}{section}
\begin{document}
\title{On the arithmetic of stable domains}

\address{Institut f\"ur Mathematik und wissenschaftliches Rechnen, Karl-Franzens-Universit\"at Graz, NAWI Graz, Heinrichstra{\ss}e 36, 8010 Graz, Austria}
\email{aqsa.bashir@uni-graz.at,alfred.geroldinger@uni-graz.at,andreas.reinhart@uni-graz.at}
\urladdr{https://imsc.uni-graz.at/geroldinger, https://imsc.uni-graz.at/reinhart/}
\author{Aqsa Bashir and Alfred Geroldinger and Andreas Reinhart}

\thanks{This work was supported by the Austrian Science Fund FWF, Project Numbers J4023-N35, W1230, and P33499.}
\keywords{stable domains, Mori domains, factorizations, sets of lengths, catenary degrees}
\subjclass[2020]{13A05, 13A15, 13F05, 13H10}

\begin{abstract}
A commutative ring $R$ is stable if every non-zero ideal $I$ of $R$ is projective over its ring of endomorphisms. Motivated by a paper of Bass in the 1960s, stable rings have received wide attention in the literature ever since then. Much is known on the algebraic structure of stable rings and on the relationship of stability with other algebraic properties such as divisoriality and the $2$-generator property. In the present paper we study the arithmetic of stable integral domains, with a focus on arithmetic properties of semigroups of ideals of stable orders in Dedekind domains.
\end{abstract}

\maketitle

\section{Introduction}\label{1}

Motivated by a paper of Bass (\cite{Ba63a}), Lipman, Sally and Vasconcelos (\cite{Li71a, Sa-Va73}) introduced the concept of stable ideals and stable rings, which has received wide attention in the literature ever since then. In the present paper we restrict to integral domains. Let $R$ be a commutative integral domain. A non-zero ideal $I\subset R$ is stable if it is projective over its ring of endomorphisms (equivalently, if it is invertible as an ideal of the overring $(I\DP I)$ of $R$). The domain $R$ is called (finitely) stable if every non-zero (finitely generated) ideal of $R$ is stable. By definition, invertible ideals are stable and this implies that Dedekind domains are stable and Pr\"ufer domains are finitely stable. On the other hand, stable domains need neither be noetherian, nor one-dimensional, nor integrally closed.
For background on stable rings, their applications, and for results till 2000 we refer to the survey \cite{Ol01a} by Olberding. Since then stable rings and domains were studied in a series of papers by Bazzoni, Gabelli, Olberding, Roitman, Salce, and others (e.g., \cite{Ga14a, Ga-Ro19a, Ga-Ro19b, Ol01b, Ol02a,Ol14a, Ol14b, Ol16a}).

The goal of the present paper is to study the arithmetic of stable domains, by building on the existing algebraic results. Mori domains and Mori monoids play a central role in factorization theory of integral domains. Every Mori domain $R$ is a BF-domain (this means that every non-zero non-unit element $a\in R$ has a factorization into irreducible elements and the set $\mathsf L(a)\subset\N$ of all possible factorization lengths is finite). For every Mori domain $R$, the monoid $\mathcal I_v^*(R)$ of $v$-invertible $v$-ideals is a Mori monoid. Our starting point is a recent result by Gabelli and Roitman (\cite{Ga-Ro19a}) stating that a domain is stable and Mori if and only if it is one-dimensional stable (Proposition~\ref{3.1}). This implies that stable Mori domains with non-zero conductor to their complete integral closure are stable orders in Dedekind domains (Theorem~\ref{3.7}), and these domains are in the center of our interest.

In Section~\ref{2} we put together some basics on monoids and domains. In Section~\ref{3} we first gather structural results on stable domains (Propositions~\ref{3.1} to~\ref{3.5}). Then we apply them to domains which are of central interest in factorization theory, namely seminormal domains, weakly Krull domains, and Mori domains. In Section~\ref{4}, we study semigroups of $r$-ideals in the setting of ideal systems of cancellative monoids. We derive structural algebraic results and use them to understand when such semigroups of $r$-ideals are half-factorial. Section~\ref{5} contains our main arithmetical results. The main purpose of Section~\ref{5} is to highlight the arithmetical advantages of stability in the context of orders in Dedekind domains. In particular, we show that a series of properties, valid in orders in quadratic number fields (which are stable), also hold true for general stable orders in Dedekind domains. The main result of Section~\ref{5} is Theorem~\ref{5.10}. Among others, it states that  the monoid of non-zero ideals and the monoid of invertible ideals of a stable order in a Dedekind domain are transfer Krull if and only if they are half-factorial (this means that they are transfer Krull only in the trivial case). This is in contrast to a recent result on Bass rings (which are stable). In \cite[Theorem 1.1]{Ba-Sm21}, Baeth and Smertnig show that the monoid of isomorphism classes of finitely generated torsion-free modules over a Bass ring is always transfer Krull.

\section{Background on monoids and domains}\label{2}
\smallskip

We denote by $\N$ the set of positive integers and by $\N_0$ we denote the set of non-negative integers. For rational numbers $a, b\in\Q$, $[a,b]=\{x\in\Z\mid a\le x\le b\}$ is the discrete interval between $a$ and $b$. For subsets $A, B\subset\Z$, $A+B=\{a+b\mid a\in A, b\in B\}$ denotes their sumset. The {\it set of distances} $\Delta(A)$ is the set of all $d\in\N$ for which there is $a\in A$ such that $A\cap [a, a+d]=\{a, a+d\}$. If $A\subset\N$, then $\rho(A)=\sup(A)/\min(A)\in\Q_{\ge 1}\cup\{\infty\}$ is the {\it elasticity} of $A$, and we set $\rho(\{0\})=1$.

Let $H$ be a multiplicatively written commutative semigroup with identity element. We denote by $H^{\times}$ the group of invertible elements of $H$. We say that $H$ is {\it reduced} if $H^{\times}=\{1\}$ and we denote by $H_{\red}=\{aH^{\times}\mid a\in H\}$ the {\it associated reduced semigroup} of $H$. An element $u\in H$ is said to be {\it cancellative} if $au=bu$ implies $a=b$ for all $a,b\in H$. The semigroup $H$ is said to be
\begin{itemize}
\item {\it cancellative} if all elements of $H$ are cancellative;
\item {\it unit-cancellative} if $a,u\in H$ and $a=au$ implies that $u\in H^{\times}$.
\end{itemize}
Clearly, every cancellative monoid is unit-cancellative. We will study semigroups of ideals that are unit-cancellative but not necessarily cancellative.

\smallskip
\centerline{\it Throughout, a monoid means a}
\centerline{\it commutative unit-cancellative semigroup with identity element.}

\smallskip
For a set $P$, we denote by $\mathcal F(P)$ the free abelian monoid with basis $P$. Elements $a\in\mathcal F(P)$ are written in the form
\[
a=\prod_{p\in P} p^{\mathsf v_p(a)}\,,\quad\text{where $\mathsf v_p\colon\mathcal F(P)\to\N_0$ }
\]
is the $p$-adic valuation. We denote by $|a|=\sum_{p\in P}\mathsf v_p(a)\in\N_0$ the {\it length} of $a$ and by $\supp(a)=\{ p\in P\mid\mathsf v_p(a) > 0\}\subset P$ the {\it support} of $a$.

Let $H$ be a monoid. A non-unit $a\in H$ is said to be an {\it atom} (or {\it irreducible}) if $a=bc$ with $b,c\in H$ implies that $b\in H^{\times}$ or $c\in H^{\times}$. We denote by $\mathcal A(H)$ the {\it set of atoms} of $H$ and we say that $H$ is {\it atomic} if every non-unit is a finite product of atoms. Two elements $a,b\in H$ are called {\it associated} if $a=bc$ for some $c\in H^{\times}$. If $a=u_1\cdot\ldots\cdot u_k\in H$, where $k\in\N$ and $u_1,\ldots,u_k\in\mathcal A(H)$, then $k$ is a {\it factorization length} and the set $\mathsf L(a)\subset\N$ of all factorization lengths of $a$ is called the {\it set of lengths} of $a$. For convenience, we set $\mathsf L(a)=\{0\}$ for $a\in H^{\times}$. Then $\mathcal L(H)=\{\mathsf L(a)\mid a\in H\}$ is the {\it system of sets of lengths} of $H$,
\[
\begin{aligned}
\Delta(H) &=\bigcup_{L\in\mathcal L(H)}\Delta(L)\subset\N\quad\text{is the {\it set of distances} of $H$, and}\\
\rho(H) &=\sup\{\rho(L)\mid L\in\mathcal L(H)\}\in\R_{\ge 1}\cup\{\infty\}\quad\text{is the {\it elasticity} of $H$}\,.
\end{aligned}
\]
We say that the {\it elasticity is accepted} if $\rho(H)=\rho(L)$ for some $L\in\mathcal L(H)$. The monoid $H$ is
\begin{itemize}
\item {\it half-factorial} if it is atomic and $|L|=1$ for all $L\in\mathcal L(H)$,
\item an {\it\FF-monoid} if it is atomic and each element of $H$ is divisible by only finitely many non-associated atoms, and
\item a {\it\BF-monoid} if it is atomic and all sets of lengths are finite.
\end{itemize}
By definition, an atomic monoid is half-factorial if and only if $\Delta(H)=\emptyset$ if and only if $\rho(H)=1$.
FF-monoids are BF-monoids, BF-monoids satisfy the ACCP (ascending chain condition on principal ideals), and monoids satisfying the ACCP are atomic and archimedean (i.e., $\bigcap_{n\ge 1} a^nH=\emptyset$ for all $a\in H\setminus H^{\times}$). If $H$ is atomic but not half-factorial, then we have $\gcd\Delta(H)=\min\Delta(H)$.

Suppose that $H$ is cancellative, $\mathfrak m=H\setminus H^{\times}$, and let $\mathsf q(H)$ be the quotient group of $H$. We denote by
\begin{itemize}
\item $H'=\{x\in\mathsf q(H)\mid\text{there is some $N\in\N$ such that $x^n\in H$ for all $n\ge N$}\}$ the {\it seminormal closure} of $H$, and by
\item $\widehat H=\{x\in\mathsf q(H)\mid\text{there is $c\in H$ such that $cx^n\in H$ for all $n\in\N$}\}$ the {\it complete integral closure} of $H$.
\end{itemize}
Then $H\subset H'\subset\widehat H\subset\mathsf q(H)$, and we say that $H$ is {\it seminormal} (resp. {\it completely integrally closed}) if $H=H'$ (resp. $H=\widehat H$). Let $A, B\subset\mathsf q(H)$ be subsets. We set $(A\DP B)=\{z\in\mathsf q(H)\mid zB\subset A\}$ and $A^{-1}=(H\DP A)$. If $A\subset H$, then $A$ is a {\it divisorial ideal} (or a {\it $v$-ideal}) if $A=A_v:=(A^{-1})^{-1}$, and $A$ is an {\it $s$-ideal} if $A=AH$. If $\mathfrak p\subsetneq H$ is an $s$-ideal of $H$, then $\mathfrak p$ is called a {\it prime $s$-ideal} of $H$ if for all $x,y\in H$ with $xy\in\mathfrak p$, it follows that $x\in\mathfrak p$ or $y\in\mathfrak p$. For an $s$-ideal $I$ of $H$ let $\sqrt{I}=\{x\in H\mid$ there is $n\in\N$ such that $x^n\in I\}$ denote the {\it radical} of $I$. The monoid $H$ is said to be
\begin{itemize}
\item {\it Mori} if it satisfies the ascending chain condition on divisorial ideals,
\item {\it Krull} if it is a completely integrally closed Mori monoid,
\item a G-{\it monoid} if the intersection of all non-empty prime $s$-ideals is non-empty,
\item {\it primary} if $H\ne H^{\times}$ and for all $a,b\in\mathfrak m$ there is $n\in\N$ such that $b^n\in aH$,
\item {\it strongly primary} if $H\ne H^{\times}$ and for every $a\in\mathfrak m$ there is $n\in\N$ such that $\mathfrak m^n\subset aH$ (we denote by $\mathcal M(a)$ the smallest $n\in\N$ having this property), and
\item {\it finitely primary} (of rank $s$ and exponent $\alpha$) if $H$ is a submonoid of a factorial monoid $F=F^{\times}\times\mathcal F(\{p_1,\ldots, p_s\})$ such that $\mathfrak m\subset p_1\cdot\ldots\cdot p_sF$ and $(p_1\cdot\ldots\cdot p_s)^{\alpha}F\subset H$.
\end{itemize}
Finitely primary monoids and primary Mori monoids are strongly primary. (To see that the last statement is valid let $H$ be a primary Mori monoid, $\mathfrak m=H\setminus H^{\times}$ and $a\in\mathfrak m$. Then $\sqrt{aH}=\mathfrak m=E_v$ for some finite set $E\subset\sqrt{aH}$. We infer that $E^n\subset aH$ for some $n\in\N$. Therefore, $\mathfrak m^n\subset (E^n)_v\subset aH$, and thus $H$ is strongly primary.) Mori monoids and strongly primary monoids are BF-monoids.

By a {\it domain}, we mean a commutative ring with non-zero identity element and without non-zero zero-divisors. Let $R$ be a domain. We denote by $R^{\bullet}=R\setminus\{0\}$ the {\it multiplicative monoid of non-zero elements}, by $R^{\times}$ the {\it group of units}, by $\overline R$ the {\it integral closure} of $R$, by $\widehat R$ the {\it complete integral closure} of $R$, by $\mathfrak X(R)$ the set of {\it non-zero minimal prime ideals} of $R$, and by $\mathsf q(R)$ the {\it quotient field} of $R$. An ideal $I\subset R$ is called {\it $2$-generated} if there are some $a,b\in I$ such that $I=aR+bR$.
We say that $R$ is atomic (a BF-domain, an FF-domain, a Mori domain, a Krull domain, a G-domain, archimedean, (strongly) primary, seminormal, completely integrally closed) if its monoid $R^{\bullet}$ has the respective property. By \cite[Proposition 2.10.7]{Ge-HK06a}, $R$ is primary if and only if $R$ is one-dimensional and local.
The domain $R$
\begin{itemize}
\item has {\it finite character} if every non-zero element is contained in only finitely many maximal ideals.
\item is {\it divisorial} if every non-zero ideal is divisorial.
\item is {\it h-local} if $R$ has finite character and every non-zero prime ideal of $R$ is contained in a unique maximal ideal of $R$.
\end{itemize}
One-dimensional Mori domains have finite character by \cite[Lemma 3.11]{Ga-Ro19a}.

Let $S$ be an integral domain such that $R\subset S$ is a subring. Then $R$ is an {\it order} in $S$ if $\mathsf q(R)=\mathsf q(S)$ and $S$ is a finitely generated $R$-module. Moreover, the following statements are equivalent if $R$ is not a field (\cite[Theorem 2.10.6]{Ge-HK06a}){\rm\,:}
\begin{itemize}
\item $R$ is an order in a Dedekind domain.
\item $R$ is one-dimensional noetherian and the integral closure $\overline R$ of $R$ is a finitely generated $R$-module.
\end{itemize}
The extension $R\subset S$ is {\it quadratic} if $xy\in xR+yR+R$ for all
$x,y\in S$; equivalently, every $R$-module between $R$ and $S$ is a ring. If $R\subset S$ is quadratic, then, for every $x\in S$, we have
$x^2\in xR+R$; that is, every $ x\in S$ is a root of a monic polynomial of degree at most $2$
with coefficients in $R$. Thus every quadratic extension is an integral extension.

\section{Stable domains}\label{3}
\smallskip

In this section we first gather main properties of stable domains (Propositions~\ref{3.1} to~\ref{3.5}). Then we analyze what consequences stability has on some key classes of domains studied in factorization theory, including seminormal domains, weakly Krull domains, G-domains, and Mori domains (Theorems~\ref{3.6} and~\ref{3.7}).

Let $R$ be a domain. A non-zero ideal $I\subset R$ is {\it stable} if it is invertible as an ideal of the overring $(I\DP I)$ of $R$. The domain is called {\it $($finitely$)$ stable} if every non-zero (finitely generated) ideal of $R$ is stable. Since invertible ideals are obviously stable, Dedekind domains are stable and Pr\"ufer domains are finitely stable. Conversely, if $R$ is completely integrally closed and stable, then $R=(I\DP I)$ for every non-zero ideal $I\subset R$, whence every non-zero ideal is invertible in $R$ and $R$ is a Dedekind domain. Recall that $R$ is an {\it almost Dedekind} domain if $R_{\mathfrak m}$ is a Dedekind domain for each $\mathfrak m\in\max(R)$. Every almost Dedekind domain is a completely integrally closed Pr\"ufer domain, and thus it is finitely stable. Nevertheless, $R$ is a Dedekind domain if and only if $R$ is a stable almost Dedekind domain. In particular, every almost Dedekind domain that is not a Dedekind domain is not stable. For an example of an almost Dedekind domain that is not a Dedekind domain we refer to \cite[Example 35, page 290]{Lo06}. We recall that stable domains need neither be noetherian, nor integrally closed, nor one-dimensional (\cite[Sections 3 and 4]{Ol01a}), and we use without further mention that overrings of stable domains are stable (\cite[Theorem 5.1]{Ol02a}).

\begin{proposition}\label{3.1}
Let $R$ be a domain that is not a field. Then the following statements are equivalent.
\begin{enumerate}
\item[(a)] $R$ is a one-dimensional stable domain.
\item[(b)] $R$ is a finitely stable Mori domain.
\item[(c)] $R$ is a stable Mori domain.
\end{enumerate}
\end{proposition}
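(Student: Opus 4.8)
The plan is to prove the cycle $(c)\Rightarrow(b)\Rightarrow(a)\Rightarrow(c)$; note that $(a)\Leftrightarrow(c)$ is essentially the Gabelli--Roitman theorem quoted in the introduction, so the point of the added statement $(b)$ is to sandwich ``finitely stable Mori'' between the other two. The implication $(c)\Rightarrow(b)$ is immediate, since by definition every stable domain is finitely stable.

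For $(a)\Rightarrow(c)$, assume $R$ is one-dimensional and stable and produce the ascending chain condition on divisorial ideals. I would first pass to the local case: localizations (and more generally overrings) of stable domains are stable, so each $R_{\mathfrak m}$ with $\mathfrak m\in\max(R)$ is a one-dimensional local stable domain, hence primary by the one-dimensional-local/primary dictionary. The key local claim is that such an $R_{\mathfrak m}$ is noetherian, i.e. an order in a DVR; one proves this by iterating the endomorphism-ring construction $R_{\mathfrak m}\subseteq(\mathfrak m R_{\mathfrak m}:\mathfrak m R_{\mathfrak m})\subseteq\cdots$, using stability of the maximal ideal at each step to control the tower, and showing that it stabilizes, so that $\overline{R_{\mathfrak m}}$ is a finitely generated $R_{\mathfrak m}$-module. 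A one-dimensional noetherian domain is Mori, and a stable domain has finite character; assembling the finitely many local contributions then yields the global ACC on divisorial ideals, so $R$ is Mori.

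For $(b)\Rightarrow(a)$, assume $R$ is finitely stable and Mori; two things must be checked. First, $R$ is one-dimensional: localizing at a maximal ideal preserves ``finitely stable and Mori'', and in the local case the stability of the finitely generated ideal $\mathfrak m$ together with the ACC on divisorial ideals excludes a chain $0\subsetneq\mathfrak p\subsetneq\mathfrak m$ of primes --- this is the Mori analogue of the Sally--Vasconcelos argument that noetherian stable domains are one-dimensional. Second, finite stability has to be upgraded to full stability: given a nonzero ideal $I$, the divisorial ideal $I_v$ of the Mori domain $R$ is the divisorial closure of a finitely generated ideal, and combining this finiteness with finite stability of that ideal lets one identify the overring $(I:I)$ and check that $I$ is invertible over it.

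The main obstacle is the local structure theory underlying both nontrivial directions: proving that a one-dimensional local stable domain is noetherian (equivalently, that the tower of endomorphism overrings of the maximal ideal terminates and $\overline R$ is module-finite), and, dually, that a finitely stable local Mori domain cannot carry a two-step prime chain. Both reduce to a termination/finiteness statement for that tower, which is the technical heart of the Gabelli--Roitman result; hence the cleanest presentation is to cite \cite{Ga-Ro19a} for $(a)\Leftrightarrow(c)$ and to supply the short arguments above for the equivalence with $(b)$.
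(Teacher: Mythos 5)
There is a genuine gap, and it sits at the heart of your argument for (a) $\Rightarrow$ (c). You propose to show that a one-dimensional local stable domain $R_{\mathfrak m}$ is noetherian (``an order in a DVR'') by arguing that the tower of endomorphism overrings of the maximal ideal terminates and that $\overline{R_{\mathfrak m}}$ is a finitely generated module. This is false: the paper itself remarks, immediately after Proposition~\ref{3.1}, that examples of Olberding (\cite{Ol01b,Ol14b}) show one-dimensional stable domains need not be noetherian. The correct (and much subtler) statement is that such domains are Mori --- they satisfy the ACC on \emph{divisorial} ideals --- without satisfying the ACC on all ideals; the endomorphism tower need not stabilize in the way you describe. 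Deducing Mori without noetherianity is precisely the content of the Gabelli--Roitman theorem, so your reduction ``noetherian $\Rightarrow$ Mori'' bypasses rather than proves the hard part. Your sketch of (b) $\Rightarrow$ (a) has the same character: both the exclusion of a two-step prime chain and the upgrade from finitely stable to stable (note that stability of $I_v$ over $(I_v\DP I_v)$ does not formally transfer to stability of $I$ over $(I\DP I)$) are exactly the cited results \cite[Theorem 4.8]{Ga-Ro19a} and \cite[Proposition 4.4]{Ga-Ro19a}, not short arguments one can supply in a few lines.

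Your closing hedge --- cite \cite{Ga-Ro19a} for (a) $\Leftrightarrow$ (c) and argue the rest --- does not quite close the cycle either, because you would still need a correct proof of (b) $\Rightarrow$ (a) or (b) $\Rightarrow$ (c), and that is again Gabelli--Roitman. The paper's own proof is purely a citation, organized slightly differently: (a) $\Leftrightarrow$ (b) is \cite[Theorem 4.8]{Ga-Ro19a}, (c) $\Rightarrow$ (b) is trivial, and (a) together with (b) implies (c) by \cite[Proposition 4.4]{Ga-Ro19a}. If you want to keep a self-contained flavor, the honest course is to cite both of these results explicitly and confine your own contribution to the trivial implication.
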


\begin{proof}
This is due to Gabelli and Roitman. More precisely,
the equivalence of (a) and (b) is proved in \cite[Theorem 4.8]{Ga-Ro19a}. Clearly, (c) implies (b). If (a) and (b) hold, then (c) holds by \cite[Proposition 4.4]{Ga-Ro19a}.
\end{proof}

Examples given by Olberding in \cite{Ol01b,Ol14b} show that one-dimensional stable domains need not be noetherian. The ring $\Int(\Z)$ of integer-valued polynomials is a two-dimensional completely integrally closed Pr\"ufer domain and a BF-domain. $\Int(\Z)$ is finitely stable (as it is Pr\"ufer) but not stable (as it is not Dedekind). Thus in Statement (b), the property ``Mori'' cannot be replaced by ``BF''. In Example~\ref{3.9}.3 we show that ``Mori'' cannot be replaced by ``BF'' in Statement (c) even if $R$ is a Pr\"ufer domain. Next we consider the local case.

\begin{corollary}\label{3.2}
Let $R$ be a local domain that is not a field.
\begin{enumerate}
\item The following statements are equivalent.
\begin{enumerate}
\item[(a)] $R$ is a one-dimensional stable domain.
\item[(b)] $R$ is a primary stable domain.
\item[(c)] $R$ is a stable Mori domain.
\item[(d)] $R$ is a strongly primary stable domain.
\end{enumerate}
If these conditions hold and $(R\DP\overline R)=\{0\}$ $($for examples, see \cite[Theorem 2.13]{Ol16a}$)$, then $\overline R$ is a discrete valuation domain.
\item If $R$ is one-dimensional, then the following statements are equivalent.
\begin{enumerate}
\item[(a)] $R$ is stable.
\item[(b)] $R$ is finitely stable with stable maximal ideal.
\item[(c)] $\overline R$ is a quadratic extension of $R$ and $\overline R$ is a Dedekind domain with at most two maximal ideals.
\end{enumerate}
\item If $R$ is finitely stable with stable maximal ideal $\mathfrak m$, then the following statements are equivalent.
\begin{enumerate}
\item[(a)] $\bigcap_{n\in\N} {\mathfrak m}^n=\{0\}$.
\item[(b)] $R$ is a \BF-domain.
\item[(c)] $R$ satisfies the {\rm ACCP}.
\item[(d)] $R$ is archimedean.
\end{enumerate}
\end{enumerate}
\end{corollary}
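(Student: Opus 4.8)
The plan is to treat the three parts separately, using Proposition~\ref{3.1} and the structure theory of stable domains for parts (1) and (2) and an elementary order-function argument for part (3). For part (1): since $R$ is local, $R$ is primary if and only if $R$ is one-dimensional by \cite[Proposition 2.10.7]{Ge-HK06a}, which gives (a)~$\Leftrightarrow$~(b), while (a)~$\Leftrightarrow$~(c) is immediate from Proposition~\ref{3.1}. If (c) holds, then $R$ is a primary Mori domain by the above, so $R^{\bullet}$ is a primary Mori monoid, hence strongly primary as noted in Section~\ref{2}; thus (c)~$\Rightarrow$~(d). Since every strongly primary monoid is primary (immediately from the definitions), (d)~$\Rightarrow$~(b), and the cycle closes. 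For the supplementary assertion, part~(2)(c) below shows that $\overline R$ is a Dedekind domain with at most two maximal ideals; a local Dedekind domain is a discrete valuation domain, so it remains to exclude two maximal ideals when $(R\DP\overline R)=\{0\}$. I would deduce this from the finer classification of one-dimensional stable local domains of Olberding and of Gabelli--Roitman: when $\overline R$ has two maximal ideals the extension $R\subseteq\overline R$ is module-finite, so $R$ is noetherian and $(R\DP\overline R)\neq\{0\}$, contradicting the hypothesis.

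Part (2) is essentially a matter of citation. The implication (a)~$\Rightarrow$~(b) is trivial because $\mathfrak m$ is a nonzero ideal. The implication (b)~$\Rightarrow$~(a), that a local finitely stable domain with stable maximal ideal is stable, and the equivalence (a)~$\Leftrightarrow$~(c), which characterizes one-dimensional stable local domains through the quadratic and Dedekind properties of $\overline R$ and the bound on its number of maximal ideals, both belong to the structure theory of stable domains; the task here is to pin down the exact references among \cite{Ga-Ro19a, Ga-Ro19b, Ol01a, Ol02a, Ol14a, Ol14b, Ol16a}.

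For part (3), the implications (b)~$\Rightarrow$~(c)~$\Rightarrow$~(d) hold for every monoid by the remarks in Section~\ref{2} (BF-domains satisfy the ACCP, and the ACCP implies archimedean). For (a)~$\Rightarrow$~(b) I would argue directly, without using stability: assuming $\bigcap_{n\in\N}\mathfrak m^n=\{0\}$, set $N(x)=\max\{n\in\N\mid x\in\mathfrak m^n\}$ for a nonzero non-unit $x$ (finite precisely because $x\notin\bigcap_n\mathfrak m^n$) and $N(x)=0$ for $x\in R^{\times}$; from $\mathfrak m^{N(y)}\mathfrak m^{N(z)}\subseteq\mathfrak m^{N(y)+N(z)}$ the function $N$ is superadditive, so a non-unit $x$ with $N(x)=1$ is an atom, every non-unit factors into atoms by induction on $N(x)$, and any factorization of $x$ into atoms has length at most $N(x)$; hence $R$ is atomic with finite sets of lengths, that is, a BF-domain. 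For (d)~$\Rightarrow$~(a) I would use stability of $\mathfrak m$ to produce $a\in\mathfrak m\setminus\{0\}$ with $\mathfrak m^2=a\mathfrak m$: one may quote the standard description of a stable ideal $I$ of a local domain as one satisfying $I^2=iI$ for some $i\in I$; conceptually, $\mathfrak m$ is invertible in the overring $T=(\mathfrak m\DP\mathfrak m)$, which is semilocal since the integral closure of a local finitely stable domain has at most two maximal ideals and every ring between $R$ and $\overline R$ inherits this bound, so $\mathfrak m=aT$ is principal in $T$ and $\mathfrak m^2=a\mathfrak m T=a\mathfrak m$. Iterating gives $\mathfrak m^n=a^{n-1}\mathfrak m\subseteq a^{n-1}R$ for all $n$, so $\bigcap_{n\in\N}\mathfrak m^n\subseteq\bigcap_{n\in\N}a^nR=\{0\}$ because $R$ is archimedean and $a$ is a nonzero non-unit.

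The routine steps are the formal implications in part (1) and the order-function argument in part (3); the main obstacle is the structure theory feeding into them, namely making precise and correctly attributing the facts that the integral closure of a (finitely) stable local domain is a Pr\"ufer, respectively Dedekind, domain with at most two maximal ideals, that a stable maximal ideal $\mathfrak m$ satisfies $\mathfrak m^2=a\mathfrak m$ for some $a\in\mathfrak m$, and that in the Mori case a second maximal ideal of $\overline R$ forces a nonzero conductor.
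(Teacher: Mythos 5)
Your proposal matches the paper's proof in overall architecture, and the parts you work out in detail are correct. For part (1) you use exactly the paper's chain: (a)$\Leftrightarrow$(b) via \cite[Proposition 2.10.7]{Ge-HK06a}, (a)$\Leftrightarrow$(c) via Proposition~\ref{3.1}, (b)+(c)$\Rightarrow$(d) via ``primary Mori implies strongly primary'', and (d)$\Rightarrow$(b) from the definitions. Part (2) is, as you say, a citation (the paper points to \cite[Theorem 4.2]{Ol16a}). In part (3) you go further than the paper: where the paper cites \cite[Theorem 1.3.4]{Ge-HK06a} for (a)$\Rightarrow$(b) and \cite[Proposition 2.12]{Ga-Ro19b} for (d)$\Rightarrow$(a), you reconstruct the arguments. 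Both reconstructions are sound: the superadditive order function $N(x)=\max\{n\mid x\in\mathfrak m^n\}$ is precisely the length-function criterion behind the cited theorem, and your derivation of $\mathfrak m^2=a\mathfrak m$ is legitimate --- since $\mathfrak m$ is invertible in $T=(\mathfrak m\DP\mathfrak m)$, one gets $T$ inside a finitely generated $R$-submodule of the quotient field, hence $T\subset\overline R$, and $\overline R$ (so also $T$) is semilocal by Proposition~\ref{3.3}.1, whence $\mathfrak m$ is principal in $T$; iterating then gives $\bigcap_n\mathfrak m^n\subset\bigcap_n a^nR=\{0\}$ from archimedeanness. This makes part (3) essentially self-contained, which is a genuine (if modest) gain over the paper's citations.

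The one place where the proposal does not actually deliver a proof is the supplementary assertion in part (1), that $(R\DP\overline R)=\{0\}$ forces $\overline R$ to be a discrete valuation domain. Reducing to ``exclude two maximal ideals of $\overline R$'' via part (2)(c) is fine, but the step you then assert --- that if $\overline R$ has two maximal ideals then $R\subset\overline R$ is module-finite, so the conductor is non-zero --- is exactly the non-trivial content of the result being proved, restated in contrapositive form; you give no argument for it. It is true, but it rests on Olberding's analysis of the chain of overrings $R_{i+1}=(\mathfrak m_i\DP\mathfrak m_i)$ (the chain stays local until it reaches $\overline R$, and when $\overline R$ has two maximal ideals the chain is finite and a power of the Jacobson radical of $\overline R$ lands in $\mathfrak m$; compare Proposition~\ref{5.7}.1). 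The paper simply cites \cite[Corollary 4.17]{Ol02a} for this, and your proof would need either that citation or the chain argument spelled out; as written, this step is circular.
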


\begin{proof}
1. Since $R$ is one-dimensional and local if and only if $R^{\bullet}$ is primary, Conditions (a) and (b) are equivalent. Conditions (a) and (c) are equivalent by Proposition~\ref{3.1}. Obviously, Condition (d) implies Condition (b). Since primary Mori monoids are strongly primary by \cite[Lemma 3.1]{Ge-Ha-Le07}, Conditions (b) and (c) imply Condition (d). If (a) - (d) hold and $(R\DP\overline R)=\{0\}$, then $\overline R$ is a discrete valuation domain by \cite[Corollary 4.17]{Ol02a}.

2. See \cite[Theorem 4.2]{Ol16a}.

3. (a) $\Rightarrow$ (b) This is an immediate consequence of \cite[Theorem 1.3.4]{Ge-HK06a}.

(b) $\Rightarrow$ (c) This follows from \cite[Corollary 1.3.3]{Ge-HK06a}.

(c) $\Rightarrow$ (d) This is clear (e.g., see page 2 of \cite{Ga-Ro19b}).

(d) $\Rightarrow$ (a) This follows from \cite[Proposition 2.12]{Ga-Ro19b}.
\end{proof}

\smallskip
Let $R$ be a domain. By Corollary~\ref{3.2}.1, every strongly primary stable domain is Mori. This is not true for general strongly primary domains (\cite[Section 3]{Ge-Ro20a}) and it is in strong contrast to other classes of strongly primary monoids (\cite[Theorem 3.3]{Ge-Go-Tr20}). By \cite[Example 5.17]{Ga-Ro19b} there exists a stable two-dimensional archimedean local integral domain. We infer by Corollary~\ref{3.2}.3 that such a domain is a BF-domain. In particular, a local stable BF-domain need not satisfy the equivalent conditions of Corollary~\ref{3.2}.1.

\smallskip
Note that if $R$ is a local domain whose ideals are $2$-generated, then $R$ is finitely stable with stable maximal ideal (e.g. see Proposition~\ref{3.5}.4) and the equivalent conditions in Corollary~\ref{3.2}.3 are satisfied (since $R$ is noetherian). Nevertheless, such a domain is (in general) neither half-factorial nor an FF-domain. In what follows, we present suitable counterexamples.

\smallskip
Let $K$ be a quadratic number field with maximal order $\mathcal{O}_K$ and $p$ be a prime number such that $p$ is split (i.e., $p\mathcal{O}_K$ is the product of two distinct prime ideals of $\mathcal{O}_K)$. (For instance, let $K=\mathbb{Q}(\sqrt{-7})$ and $p=2$.) Let $\mathcal{O}$ be the unique order in $K$ with conductor $p\mathcal{O}_K$ and let $\mathfrak p$ be a maximal ideal of $\mathcal{O}$ that contains the conductor. Set $S=\mathcal{O}_{\mathfrak p}$. Then $S$ is a local domain whose ideals are $2$-generated and there are precisely two maximal ideals of $\overline{S}$ that are lying over the maximal ideal of $S$. It follows from \cite[Theorem 3.1.5.2]{Ge-HK06a} that $S$ is not half-factorial. (Note that $S^{\bullet}$ is a finitely primary monoid of rank two, and thus it has infinite elasticity by \cite[Theorem 3.1.5.2]{Ge-HK06a}. Therefore, it cannot be half-factorial.)

\smallskip
Let $T=\mathbb{R}+X\mathbb{C}[\![X]\!]$. Then $T$ is a local domain with maximal ideal $X\mathbb{C}[\![X]\!]$ and every ideal of $T$ is $2$-generated by Corollary~\ref{3.2}.2 and Proposition~\ref{3.5}.4. Observe that $T$ is not an FF-domain, since $aX,a^{-1}X\in\mathcal{A}(T)$ and $X^2=(aX)(a^{-1}X)$ for each $a\in\mathbb{C}\setminus\{0\}$.

\smallskip
We do not know whether a local atomic finitely stable domain with stable maximal ideal satisfies the equivalent conditions in Corollary~\ref{3.2}.3.

\medskip

\begin{proposition}\label{3.3}
Let $R$ be a domain.
\begin{enumerate}
\item $R$ is finitely stable if and only if $R\subset\overline R$ is a quadratic extension, $\overline R$ is Pr\"ufer, and there are at most two maximal ideals of $\overline{R}$ lying over every maximal ideal of $R$.
\item A semilocal Pr\"ufer domain is stable if and only if it is strongly discrete.
\item $R$ is an integrally closed stable domain if and only of $R$ is a strongly discrete Pr\"ufer domain with finite character if and only if $R$ is a generalized Dedekind domain with finite character.
\item An integrally closed one-dimensional domain is stable if and only if it is Dedekind.
\end{enumerate}
\end{proposition}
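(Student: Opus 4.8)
The four assertions amount to a compilation: (1)--(3) assemble the structure theory of (finitely) stable domains due to Olberding together with standard facts on Pr\"ufer and generalized Dedekind domains, while (4) is a short deduction from (3). Two tools are used repeatedly. First, overrings of (finitely) stable domains are again (finitely) stable. Second, for a finitely generated nonzero ideal $I$ the ring $(I\DP I)$ is integral over $R$; hence, if $R$ is integrally closed, then $(I\DP I)=R$ and $I$ is invertible in $R$, so an integrally closed finitely stable domain is a Pr\"ufer domain.

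For (1), in the forward direction $\overline R$ is finitely stable, hence Pr\"ufer by the remark just made. The ``at most two primes in each fiber'' condition I would obtain by localizing at $\mathfrak m\in\max(R)$: when $R_{\mathfrak m}$ is one-dimensional, Corollary~\ref{3.2}.2 applies directly; when $R_{\mathfrak m}$ has higher dimension it is still a local Pr\"ufer domain, so $\overline{R_{\mathfrak m}}$ is a valuation domain with a unique maximal ideal. The delicate point is the quadraticity of $R\subseteq\overline R$, and this is where I would lean on Olberding's analysis (\cite{Ol01a,Ol02a,Ol14b}); the converse direction is local, one verifying by a Sally--Vasconcelos-type two-generator computation, fed by the hypotheses on $\overline{R_{\mathfrak m}}$, that each finitely generated $IR_{\mathfrak m}$ becomes principal over $(IR_{\mathfrak m}\DP IR_{\mathfrak m})$. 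Part (2) is Olberding's description of stable semilocal Pr\"ufer domains (in a valuation domain an ideal $I$ is stable exactly when $(I\DP I)$ is a valuation overring of $R$ in which $I$ is principal, which translates into strong discreteness, and the semilocal case is handled prime by prime), and I would cite it. In (3), the equivalence ``strongly discrete Pr\"ufer with finite character $\iff$ generalized Dedekind with finite character'' belongs to the theory of generalized Dedekind domains (Popescu, Gabelli) and would be cited, while ``integrally closed stable $\iff$ strongly discrete Pr\"ufer with finite character'' results from combining ``integrally closed stable $\Rightarrow$ Pr\"ufer'' with the characterization of stable Pr\"ufer domains as precisely the strongly discrete ones of finite character.

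The one part I would prove from scratch is (4). If $R$ is a Dedekind domain, it is integrally closed, one-dimensional and, every nonzero ideal being invertible, stable. Conversely, assume $R$ is integrally closed, one-dimensional and stable. By (3), $R$ is a strongly discrete Pr\"ufer domain with finite character. Fix $\mathfrak m\in\max(R)$; then $R_{\mathfrak m}$ is a one-dimensional valuation domain, and strong discreteness passes to $R_{\mathfrak m}$ since an ideal and its square have the same localizations at every maximal ideal. Hence the maximal ideal of $R_{\mathfrak m}$ is not idempotent, and a one-dimensional valuation domain whose maximal ideal is not idempotent is a discrete valuation domain. Thus $R_{\mathfrak m}$ is a discrete valuation domain for every $\mathfrak m$, i.e. $R$ is an almost Dedekind domain; being also stable, $R$ is a Dedekind domain by the equivalence recalled before Proposition~\ref{3.1} (a domain is Dedekind if and only if it is a stable almost Dedekind domain).

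The principal obstacle is bibliographic rather than conceptual: matching the hypotheses of the quoted results of Olberding (and of the generalized-Dedekind literature) to the formulations in (1)--(3). Within (1), the genuinely substantive ingredient is the implication ``finitely stable $\Rightarrow$ $R\subseteq\overline R$ quadratic'', together with the need to run the reduction to the quadraticity and fiber conditions uniformly over maximal ideals of possibly different Krull dimension; I would expect that to be the crux, and would either reprove it locally or cite it.
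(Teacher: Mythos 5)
Your overall strategy coincides with the paper's: Parts 1--3 are treated as a compilation of known results (the paper cites \cite[Corollary 5.11]{Ol14a} for Part 1, \cite[Proposition 2.10]{An-Hu-Pa87} and \cite[Proposition 2.5]{Ga14a} for Part 2, and \cite[Theorem 4.6]{Ol98a} together with \cite[Corollary 2.13]{Ga14a} for Part 3), and Part 4 is deduced from Part 3. The only genuine divergence is in that last deduction: the paper invokes the fact that a domain is Dedekind if and only if it is a generalized Dedekind domain of dimension one (\cite[Proposition 2.1]{Ga06a}), whereas you localize a one-dimensional strongly discrete Pr\"ufer domain of finite character at each maximal ideal, obtain discrete valuation domains, and conclude via the almost Dedekind characterization. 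Your route is more self-contained, and its key step is sound: if $\mathfrak m_{\mathfrak m}$ were idempotent, then $\mathfrak m$ and $\mathfrak m^2$ would agree at every localization (trivially so at maximal ideals other than $\mathfrak m$, because $\mathfrak m$ is maximal), forcing $\mathfrak m=\mathfrak m^2$ against strong discreteness; this is exactly where one-dimensionality enters, since for a non-maximal prime the same comparison would not close up.

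Two asides in your sketch of Part 1 are inaccurate, though they do not damage the proposal because you ultimately defer to Olberding's result, exactly as the paper does. First, Corollary~\ref{3.2}.2 does not ``apply directly'' to a one-dimensional local finitely stable domain: its hypothesis is finite stability \emph{together with} stability of the maximal ideal, and the maximal ideal of $R_{\mathfrak m}$ need not be finitely generated. Second, the claim that a higher-dimensional local finitely stable $R_{\mathfrak m}$ ``is still a local Pr\"ufer domain, so $\overline{R_{\mathfrak m}}$ is a valuation domain with a unique maximal ideal'' is false: finitely stable local domains need not be integrally closed (Example~\ref{3.9}.2 exhibits a two-dimensional local stable domain that is not integrally closed), and the conclusion you draw would give at most \emph{one} maximal ideal in each fibre, which is strictly stronger than the statement being proved and already fails for localizations of orders in quadratic number fields at split primes. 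So the localization shortcuts do not replace the citation; as in the paper, it is \cite[Corollary 5.11]{Ol14a} that carries Part 1.
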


\begin{proof}
Recall that a Pr\"ufer domain $R$ is strongly discrete provided that no non-zero prime ideal $P$ of $R$ satisfies $P=P^2$.

1. \cite[Corollary 5.11]{Ol14a}.

2. See \cite[Proposition 2.10]{An-Hu-Pa87} and \cite[Proposition 2.5]{Ga14a}.

3. It is an immediate consequence of \cite[Theorem 4.6]{Ol98a} that $R$ is an integrally closed stable domain if and only if $R$ is a strongly discrete Pr\"ufer domain with finite character. Moreover, it follows from \cite[Corollary 2.13]{Ga14a} that $R$ is integrally closed and stable if and only if $R$ is a generalized Dedekind domain with finite character.

4. Since a domain is Dedekind if and only if it is generalized Dedekind of dimension one (\cite[Proposition 2.1]{Ga06a}, this follows from 3.
\end{proof}

Proposition~\ref{3.3}.4 characterizes integrally closed stable domains, that are one-dimensional. However, there are,
for every $n\in\N$, $n$-dimensional local stable valuation domains (\cite[Example 5.11]{Ga-Ro19b}, and recall that valuation domains are integrally closed).

\begin{lemma}\label{3.4}
Let $R$ be a local domain with maximal ideal $\mathfrak m$ such that $R$ is not a field.
\begin{enumerate}
\item If $R$ is noetherian, then $R$ is divisorial if and only if $R$ is one-dimensional and ${\mathfrak m}^{-1}/R$ is a simple $R$-module.
\item If $R$ is seminormal and one-dimensional, then $(R\DP\widehat{R})\supset\mathfrak m$.
\end{enumerate}
\end{lemma}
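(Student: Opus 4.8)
The two assertions are essentially independent, so I would prove them separately; (2) is where the real work lies, while (1) is classical and I would mostly quote it.

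For (1): the plan is to recall the classical fact (Bass) that a noetherian domain is divisorial if and only if it is one-dimensional and Gorenstein, and then to translate ``Gorenstein'' into the stated module condition. Since $R$ is local and not a field, this already forces $\dim R=1$. Now a one-dimensional noetherian local domain is Cohen--Macaulay (it has depth $\ge 1$, as every nonzero element is a nonzerodivisor, and depth $\le\dim=1$), hence it is Gorenstein exactly when its Cohen--Macaulay type $\dim_{R/\mathfrak m}\operatorname{Ext}^1_R(R/\mathfrak m,R)$ equals $1$. Fixing $x\in\mathfrak m\setminus\{0\}$ and applying $\Hom_R(R/\mathfrak m,-)$ to the exact sequence $0\to R\xrightarrow{\,x\,}R\to R/xR\to 0$, together with $\Hom_R(R/\mathfrak m,R)=0$, one obtains $\operatorname{Ext}^1_R(R/\mathfrak m,R)\cong\Hom_R(R/\mathfrak m,R/xR)=(xR\DP\mathfrak m)/xR$, and multiplication by $x^{-1}$ identifies the latter with $\mathfrak m^{-1}/R$ as an $R/\mathfrak m$-module. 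Thus $\dim_{R/\mathfrak m}(\mathfrak m^{-1}/R)=1$ is equivalent to Gorensteinness, i.e.\ to divisoriality. (If one prefers not to cite Bass, the same bookkeeping can be made self-contained: $R$ is divisorial if and only if, for every nonzerodivisor $z$, the ring $R/zR$ satisfies $(zR\DP(zR\DP I))=I$ for all its ideals --- one first checks $(zR\DP(zR\DP I))=I_v$ whenever $z\in I$ --- and a noetherian ring with this double-annihilator property is quasi-Frobenius, hence artinian and, being local, has simple socle; finally $\operatorname{socle}(R/zR)\cong(zR\DP\mathfrak m)/zR\cong\mathfrak m^{-1}/R$, and artinianity of $R/zR$ gives $\dim R=1$.)

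For (2): we must show $\mathfrak m\widehat R\subseteq R$, so fix $m\in\mathfrak m$ and $x\in\widehat R$; we may assume $x\notin R$. Choose $c\in R^{\bullet}$ with $cx^n\in R$ for all $n\in\N$. Then $c\,R[x]\subseteq R$, so $J:=(R\DP R[x])$ is a nonzero ideal of $R$, and it is proper since $x\notin R$. As the only radical ideals of the one-dimensional local domain $R$ are $(0)$, $\mathfrak m$ and $R$, we conclude $\sqrt J=\mathfrak m$. Hence there is $k\in\N$ with $m^k\in J$, i.e.\ $m^kx^n\in R$ for all $n\in\N_0$, and therefore $(mx)^n=m^{\,n-k}\,(m^kx^n)\in R$ for every $n\ge k$. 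Thus $mx$ lies in the seminormal closure of $R$, and since $R$ is seminormal, $mx\in R$. As $m$ and $x$ were arbitrary, $\mathfrak m\widehat R\subseteq R$, that is $(R\DP\widehat R)\supseteq\mathfrak m$.

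The hard part, such as it is, will be in (1): one has to decide how much of the ``divisorial $\Leftrightarrow$ one-dimensional Gorenstein'' package to reprove versus cite, the one genuinely nontrivial input being that a noetherian divisorial domain is automatically one-dimensional. Part (2) is completely elementary once one spots the right device: the argument only ever produces \emph{high} powers of $mx$ inside $R$, which is precisely what the ``eventual-power'' description of the seminormal closure asks for, so neither a noetherian hypothesis nor finiteness of $\overline R$ over $R$ is needed.
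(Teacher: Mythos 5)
Your proposal is correct, but it takes a genuinely different route: the paper disposes of both parts by citation alone, quoting Bazzoni's Theorem~A from \emph{Divisorial domains} for (1) and Lemma~3.3 of Geroldinger--Kainrath--Reinhart for (2), whereas you actually prove the statements. For (1) your reduction is the standard one hiding behind Bazzoni's theorem in the local noetherian case: divisorial $\Leftrightarrow$ one-dimensional Gorenstein (the nontrivial input being Matlis/Bass, that reflexivity of all ideals forces dimension one), and then the computation $\operatorname{Ext}^1_R(R/\mathfrak m,R)\cong(xR\DP\mathfrak m)/xR\cong\mathfrak m^{-1}/R$ correctly identifies the Cohen--Macaulay type with $\dim_{R/\mathfrak m}(\mathfrak m^{-1}/R)$; the only point worth making explicit is that $\mathfrak m\mathfrak m^{-1}\subset R$, so $\mathfrak m^{-1}/R$ really is an $R/\mathfrak m$-vector space and ``simple'' means ``one-dimensional''. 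For (2) your argument is complete and elementary: $(R\DP R[x])$ is a nonzero proper ideal, hence has radical $\mathfrak m$ because $\spec(R)=\{\{0\},\mathfrak m\}$, and the resulting inclusion $m^kx^n\in R$ for all $n$ gives $(mx)^n\in R$ for all large $n$, which is exactly membership in the seminormal closure as defined in Section~\ref{2}; this matches the spirit of the cited lemma and, as you note, uses neither noetherianity nor module-finiteness of $\overline R$. What the paper's approach buys is brevity; what yours buys is a self-contained proof of (2) and a transparent explanation of why the module condition in (1) is just the Gorenstein condition in disguise.
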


\begin{proof}
1. This follows from \cite[Theorem A]{Ba00a}.

2. This is an immediate consequence of \cite[Lemma 3.3]{Ge-Ka-Re15a}.
\end{proof}

\begin{proposition}\label{3.5}
Let $R$ be a domain.
\begin{enumerate}
\item $R$ is divisorial if and only if $R$ is h-local and $R_{\mathfrak m}$ is divisorial for every ${\mathfrak m}\in\max(R)$.
\item $R$ is stable if and only if $R$ is of finite character and $R_{\mathfrak m}$ is stable for every ${\mathfrak m}\in\max(R)$.
\item $R$ is a divisorial Mori domain if and only if $R$ is of finite character and $R_{\mathfrak m}$ is a divisorial Mori domain for every ${\mathfrak m}\in\max(R)$.
\item Every ideal of $R$ is $2$-generated if and only if $R$ is a divisorial stable Mori domain. If $R$ is a stable Mori domain with $(R\DP\overline R)\ne\{0\}$, then $R$ is divisorial and every ideal of $R$ is $2$-generated.
\item Every ideal of $R$ is $2$-generated if and only if $R$ is of finite character and for all ${\mathfrak m}\in\max(R)$, every ideal of $R_{\mathfrak m}$ is $2$-generated.
\end{enumerate}
\end{proposition}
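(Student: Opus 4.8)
The statements (1), (2), (3) and (5) are local--global principles, and the plan is to establish them in this order and then deduce (4) by reducing it to the local case and applying Lemma~\ref{3.4}.1 and Corollary~\ref{3.2}.2. The recurring device is that every non-zero ideal is the intersection of its localizations at the maximal ideals and that, under finite character, all but finitely many of these localizations are trivial; so finite character converts pointwise assertions about ideals into finite, gluable ones.

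For (1), the ``only if'' direction belongs to Matlis's and Bazzoni's analysis of divisorial domains (the circle of results underlying Lemma~\ref{3.4}.1): a divisorial domain has finite character, no non-zero prime lies in two maximal ideals, and every localization is again divisorial. The ``if'' direction is a gluing argument: for an h-local domain with all localizations divisorial, a non-zero ideal $I$ equals the locally finite intersection $\bigcap_{\mathfrak m\in\max(R)}IR_{\mathfrak m}$, and the equality $I=I_v$ is verified by localizing the inclusion $I\subset I_v$. Statement (2) is Olberding's local--global theorem for stability, which I would invoke directly; its two implications rest, respectively, on the fact that a stable domain has finite character and on gluing invertibility over the endomorphism rings across the finitely many nontrivial localizations.

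For (3), the key remark is that a divisorial domain is Mori if and only if it is noetherian, because in a divisorial domain the ascending chain condition on divisorial ideals is the ascending chain condition on all ideals; hence ``divisorial Mori'' $=$ ``divisorial noetherian'', and (3) follows from (1) together with the elementary fact that a domain of finite character whose localizations at maximal ideals are all noetherian is noetherian (a non-zero ideal meets only finitely many maximal ideals, is finitely generated at each of them, and is therefore finitely generated). Statement (5): the ``only if'' direction is immediate since the $2$-generator property passes to localizations, and for ``if'' one runs the same gluing scheme --- by prime avoidance over the finitely many maximal ideals $\mathfrak m\supset I$, pick $a,b\in I$ whose images generate each $IR_{\mathfrak m}$, so that $aR+bR=I$ after a local check.

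It remains to prove (4). \emph{Forward implication:} if every ideal is $2$-generated then $R$ is noetherian, hence Mori; every ideal is stable --- by the classical analysis of Bass and of Sally--Vasconcelos a $2$-generated ideal $I=aR+bR$ can be arranged so that $I^2=aI$, whence $a^{-1}I$ is an idempotent fractional ideal, i.e.\ a ring, necessarily $(I\DP I)$, over which $I=a\,(I\DP I)$ is principal; and $R$ is divisorial by a local check through Lemma~\ref{3.4}.1. \emph{Converse:} the ``only if'' directions of (1), (2), (3) show that divisoriality, stability and the Mori property localize, and together with (5) this reduces the claim to showing that a local divisorial stable Mori domain $R$ that is not a field has every ideal $2$-generated. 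In that situation Lemma~\ref{3.4}.1 gives $\dim R=1$ and $\mathfrak m^{-1}=R+Rx$ for a single element $x$, while Corollary~\ref{3.2}.2 makes $\overline R$ a quadratic extension of $R$ and a Dedekind domain with at most two maximal ideals; since every $R$-module between $R$ and $\overline R$ is a ring, the overring $(\mathfrak m\DP\mathfrak m)$ is one of these, is $2$-generated over $R$ (it equals $R$ or $\mathfrak m^{-1}=R+Rx$), and $\mathfrak m$ is invertible over it --- and from this the $2$-generator property for every ideal is extracted. Finally, the second sentence of (4): a stable Mori domain $R$ with $(R\DP\overline R)\ne\{0\}$ is one-dimensional stable by Proposition~\ref{3.1}, and since $\overline R$ --- an integrally closed one-dimensional overring of $R$ --- is Dedekind by Proposition~\ref{3.3}.4 and hence equals $\widehat R$, Theorem~\ref{3.7} shows $R$ is an order in a Dedekind domain, in particular noetherian; it has finite character, is locally divisorial by the analysis just given, hence divisorial by (1), and the first sentence applies. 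I expect the real obstacle to be precisely the local step in the converse of (4): upgrading ``$\mathfrak m^{-1}/R$ simple'' together with ``every ideal invertible over its endomorphism ring'' to the \emph{sharp} bound of two $R$-generators for \emph{every} ideal, where the full structure of stable local orders must be used and the argument squared with the classical literature on the two-generator problem.
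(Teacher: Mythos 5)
Your local--global architecture for (1), (2), (3), (5) matches the paper's in spirit (the paper cites \cite[Proposition 5.4]{Ba-Sa96a} for (1), \cite[Theorem 3.3]{Ol02a} for (2), and derives (5) from (2)--(4) rather than by a direct prime-avoidance argument), but there are two genuine problems. First, the heart of (4) is the implication ``local divisorial stable Mori $\Rightarrow$ every ideal is $2$-generated,'' and you explicitly leave it open (``I expect the real obstacle to be precisely the local step\dots''). Your sketch gets as far as showing $(\mathfrak m\DP\mathfrak m)$ is $2$-generated and $\mathfrak m$ is invertible over it, but extracting the two-generator property for \emph{arbitrary} ideals from this is exactly the hard theorem of the Bass--Matlis--Sally--Vasconcelos--Olberding circle; the paper does not reprove it but invokes \cite[Theorems 3.1 and 3.12]{Ol01c}, which state that the $2$-generator property is equivalent to ``noetherian stable divisorial.'' Without either closing that step or citing such a result, (4) is not proved. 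Second, your proof of the last sentence of (4) via Theorem~\ref{3.7} is circular: the paper's proof of Theorem~\ref{3.7} ends with ``Since $(R\DP\overline R)\ne\{0\}$, Proposition~\ref{3.5}.4 implies that every ideal of $R$ is $2$-generated,'' so Theorem~\ref{3.7} depends on the very statement you are proving. (There is also a hypothesis mismatch: Theorem~\ref{3.7} assumes $(R\DP\widehat R)\ne\{0\}$, while Proposition~\ref{3.5}.4 only gives you $(R\DP\overline R)\ne\{0\}$, and for Mori domains $\widehat R$ may properly contain $\overline R$.) The paper's route is non-circular: Proposition~\ref{3.1} gives one-dimensionality, and then \cite[Proposition 4.5]{Ol01b} gives the $2$-generator property directly.

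Two smaller gaps. In (3), the ``if'' direction gives you only finite character, whereas invoking (1) requires h-locality; you must first observe (as the paper does) that each $R_{\mathfrak m}$, being local divisorial and noetherian, is one-dimensional by Lemma~\ref{3.4}.1, so that $R$ is one-dimensional and finite character upgrades to h-local. In (5), the ``only if'' direction is not ``immediate'': passing the $2$-generator property to localizations is easy, but finite character requires an argument (e.g., via one-dimensionality of a noetherian domain with the $2$-generator property, or via stability and (2) once (4) is available -- which is how the paper orders things, deducing (5) from (2), (3) and (4) rather than proving it independently first).
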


\begin{proof}
1. This follows from \cite[Proposition 5.4]{Ba-Sa96a}.

2. This follows from \cite[Theorem 3.3]{Ol02a}.

3. Without restriction assume that $R$ is not a field. First let $R$ be a divisorial Mori domain. It follows by 1. that $R$ is of finite character and $R_{\mathfrak m}$ is divisorial for all ${\mathfrak m}\in\max(R)$. Clearly, $R_{\mathfrak m}$ is a Mori domain for every ${\mathfrak m}\in\max(R)$.

Now let $R$ be of finite character and let $R_{\mathfrak m}$ be a divisorial Mori domain for every ${\mathfrak m}\in\max(R)$. We infer by \cite[Th\'eor\`eme 1]{Qu76a} that $R$ is a Mori domain. If ${\mathfrak m}\in\max(R)$, then $R_{\mathfrak m}$ is clearly noetherian, and hence $R_{\mathfrak m}$ is one-dimensional by Lemma~\ref{3.4}.1. Therefore, $R$ is one-dimensional, and thus $R$ is h-local. Therefore, $R$ is divisorial by 1.

4. We infer by \cite[Theorems 3.1 and 3.12]{Ol01c} that every ideal of $R$ is $2$-generated if and only if $R$ is a noetherian stable divisorial domain. Clearly, $R$ is noetherian and divisorial if and only if $R$ is a divisorial Mori domain, and hence the first statement follows. If $R$ is a stable Mori domain with $(R\DP\overline R)\ne\{0\}$, then $R$ is at most one-dimensional by Proposition~\ref{3.1}, and thus every ideal of $R$ is 2-generated by \cite[Proposition 4.5]{Ol01b}.

5. This is an immediate consequence of 2., 3. and 4.
\end{proof}

By Proposition~\ref{3.5}.4, orders in quadratic number fields are stable because every ideal is $2$-generated (for background on orders in quadratic number fields we refer to \cite{HK13a}). Much research was done to characterize domains, for which all ideals are $2$-generated (\cite[Theorem 7.3]{Ba-Sa96a}, \cite[Theorem 17]{Go98a}, \cite{Ma70a}). We continue with a characterization within the class of seminormal domains.

\begin{theorem}\label{3.6}
Let $R$ be a seminormal domain. Then the following statements are equivalent.
\begin{enumerate}
\item[(a)] Every ideal of $R$ is $2$-generated.
\item[(b)] $R$ is a divisorial Mori domain.
\item[(c)] $R$ is a finitely stable Mori domain.
\end{enumerate}
\end{theorem}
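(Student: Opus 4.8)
The plan is to read the theorem off Proposition~\ref{3.5}.4, which says that a domain has all its ideals $2$-generated if and only if it is a divisorial stable Mori domain. Since stable domains are finitely stable, this gives the implications (a)$\Rightarrow$(b) and (a)$\Rightarrow$(c) at once; as a field satisfies all three conditions trivially, I may assume $R$ is not a field, and it remains to prove (c)$\Rightarrow$(a) and (b)$\Rightarrow$(a).

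For (c)$\Rightarrow$(a) I would work locally. By Proposition~\ref{3.1} a finitely stable Mori domain is one-dimensional and stable, hence of finite character (one-dimensional Mori domains having finite character). Fix ${\mathfrak m}\in\max(R)$. Then $R_{\mathfrak m}$, being an overring of $R$, is a one-dimensional local stable domain, hence a stable Mori domain by Proposition~\ref{3.1}, and it is again seminormal (seminormality passes to localizations). By Lemma~\ref{3.4}.2 we have ${\mathfrak m}R_{\mathfrak m}\subset(R_{\mathfrak m}\DP\widehat{R_{\mathfrak m}})\subset(R_{\mathfrak m}\DP\overline{R_{\mathfrak m}})$, so the conductor $(R_{\mathfrak m}\DP\overline{R_{\mathfrak m}})$ is non-zero; by the second part of Proposition~\ref{3.5}.4 every ideal of $R_{\mathfrak m}$ is $2$-generated, and Proposition~\ref{3.5}.5 transfers this to $R$.

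For (b)$\Rightarrow$(a) I would once more reduce to the local case. A divisorial Mori domain satisfies the ascending chain condition on divisorial ideals and all of its ideals are divisorial, so it is noetherian; by Proposition~\ref{3.5}.1 it has finite character and each $R_{\mathfrak m}$ is divisorial, hence noetherian and, by Lemma~\ref{3.4}.1, one-dimensional. Using Proposition~\ref{3.5}.5 it thus suffices to prove that every ideal of a local, one-dimensional, noetherian, divisorial, seminormal domain $R$ with maximal ideal ${\mathfrak m}$ is $2$-generated. If $R=\overline R$ then $R$ is a discrete valuation domain and there is nothing to prove, so assume $R\subsetneq\overline R$. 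By Lemma~\ref{3.4}.2, ${\mathfrak m}\subset(R\DP\widehat R)\subset(R\DP\overline R)$, and since $(R\DP\overline R)$ is an ideal of $R$ properly contained in $R$ yet containing the maximal ideal ${\mathfrak m}$, we get $(R\DP\overline R)={\mathfrak m}$. Hence ${\mathfrak m}\overline R\subset R$, so $\overline R\subset(R\DP{\mathfrak m})={\mathfrak m}^{-1}$; moreover ${\mathfrak m}\overline R$ is an ideal of $\overline R$ contained in $R$, hence contained in the conductor $(R\DP\overline R)={\mathfrak m}$, so ${\mathfrak m}\overline R={\mathfrak m}$. Now Lemma~\ref{3.4}.1 says that ${\mathfrak m}^{-1}/R$ is a simple $R$-module, so its non-zero submodule $\overline R/R$ is simple as well. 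This forces $R\subset\overline R$ to be a quadratic extension --- the only $R$-submodules between $R$ and $\overline R$ are $R$ and $\overline R$, and $\overline R=R+Rw$ for any $w\in\overline R\setminus R$ is module-finite over $R$ --- so $\overline R$ is a noetherian, integrally closed, one-dimensional, semilocal domain, that is, a Dedekind domain, each maximal ideal of which lies over ${\mathfrak m}$. Comparing $R/{\mathfrak m}$-dimensions in the exact sequence $0\to R/{\mathfrak m}\to\overline R/{\mathfrak m}\overline R\to\overline R/R\to 0$ (using ${\mathfrak m}\overline R={\mathfrak m}$ and the simplicity of $\overline R/R$) gives $\dim_{R/{\mathfrak m}}(\overline R/{\mathfrak m}\overline R)=2$, and since each maximal ideal of $\overline R$ divides ${\mathfrak m}\overline R$, it follows that $\overline R$ has at most two maximal ideals. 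By Corollary~\ref{3.2}.2 the domain $R$ is stable, hence a stable Mori domain with $(R\DP\overline R)={\mathfrak m}\ne\{0\}$, and the second part of Proposition~\ref{3.5}.4 gives that every ideal of $R$ is $2$-generated; a final application of Proposition~\ref{3.5}.5 handles the general case.

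I expect the computation with $\overline R$ in the previous paragraph to be the main obstacle: one must combine divisoriality --- which, by Lemma~\ref{3.4}.1, is precisely the simplicity of ${\mathfrak m}^{-1}/R$ --- with seminormality --- which, by Lemma~\ref{3.4}.2, pushes ${\mathfrak m}$ into the conductor and thereby traps $\overline R$ inside ${\mathfrak m}^{-1}$ --- so as to confine $\overline R$ to a quadratic extension of $R$ with at most two maximal ideals, which is exactly the input required for the local stability criterion of Corollary~\ref{3.2}.2. Everything else is routine localization bookkeeping with Propositions~\ref{3.1} and~\ref{3.5}.
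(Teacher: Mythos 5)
Your proposal is correct and follows essentially the same route as the paper: reduce to the local case via Proposition~\ref{3.5}, and in the hard direction combine Lemma~\ref{3.4}.2 (seminormality pins the conductor at $\mathfrak m$ and traps $\overline R$ inside $\mathfrak m^{-1}$) with Lemma~\ref{3.4}.1 (simplicity of $\mathfrak m^{-1}/R$) to get a quadratic extension with at most two maximal ideals, then invoke Corollary~\ref{3.2}.2 and Proposition~\ref{3.5}.4. The only difference is organizational --- you prove (b)$\Rightarrow$(a) directly rather than chaining (b)$\Rightarrow$(c)$\Rightarrow$(a) as the paper does, and you read off the simplicity of $\overline R/R$ as a non-zero submodule of the simple module $\mathfrak m^{-1}/R$ instead of first establishing $\overline R=\mathfrak m^{-1}$ --- but the substance is identical.
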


\begin{proof}
Without restriction assume that $R$ is not a field. Note that if $R$ is of finite character, then $R$ is a Mori domain if and only if $R_{\mathfrak m}$ is a Mori domain for every $\mathfrak m\in\max(R)$ (\cite[Th\'eor\`eme 1]{Qu76a}). We obtain by Proposition~\ref{3.5}.3 that $R$ is a divisorial Mori domain if and only if $R$ is of finite character and $R_{\mathfrak m}$ is a divisorial Mori domain for every $\mathfrak m\in\max(R)$. Besides that we infer by Propositions~\ref{3.1} and~\ref{3.5}.2 that $R$ is a finitely stable Mori domain if and only if $R$ is of finite character and $R_{\mathfrak m}$ is a finitely stable Mori domain for every $\mathfrak m\in\max(R)$. By using Proposition~\ref{3.5}.5 and the fact that $R_{\mathfrak m}$ is seminormal for every $\mathfrak m\in\max(R)$, it suffices to prove the equivalence in the local case. Let $R$ be local with maximal ideal $\mathfrak m$.

(a) $\Rightarrow$ (b) This follows from Proposition~\ref{3.5}.4.

(b) $\Rightarrow$ (c) Observe that $R$ is noetherian, and thus $R$ is one-dimensional by Lemma~\ref{3.4}.1. We infer that $\overline{R}$ is a semilocal principal ideal domain, and thus $\overline{R}$ is a finitely stable Mori domain. In particular, we can assume without restriction that $R$ is not integrally closed. Since $R\subsetneq\overline{R}$, it follows that $(R\DP\overline{R})=(R\DP\widehat{R})=\mathfrak m$ by Lemma~\ref{3.4}.2. Since $R$ is not integrally closed, we have that $\mathfrak m$ is not invertible. Therefore, $\mathfrak m\mathfrak m^{-1}\subset\mathfrak m$. Moreover, $\overline{R}\mathfrak m=\overline{R}(R\DP\overline{R})\subset R$, and hence $\overline{R}\subset\mathfrak m^{-1}$. We infer that $\mathfrak m^{-1}\subset (\mathfrak m\DP\mathfrak m)\subset\overline{R}\subset \mathfrak m^{-1}$, and thus $\overline{R}=\mathfrak m^{-1}$.

Consequently, $\overline{R}/R$ is a simple $R$-module by Lemma~\ref{3.4}.1. In particular, $R\subset\overline{R}$ is a quadratic extension. Observe that $\boldsymbol l_R(\overline{R}/\mathfrak m)=\boldsymbol l_R(\overline{R}/R)+\boldsymbol l_R(R/\mathfrak m)=2$. Set $k=|\{\mathfrak q\in\max(\overline{R})\mid \mathfrak q\cap R=\mathfrak m\}|$. Then $k=|\max(\overline{R})|$. Assume that $k\geq 3$. There are some distinct $\mathfrak q_1,\mathfrak q_2,\mathfrak q_3\in\max(\overline{R})$. Note that $\mathfrak m\subset \mathfrak q_1\cap\mathfrak q_2\cap\mathfrak q_3\subsetneq \mathfrak q_1\cap\mathfrak q_2\subsetneq\mathfrak q_1\subsetneq\overline{R}$, and thus $\boldsymbol l_R(\overline{R}/\mathfrak m)\geq 3$, a contradiction. We infer that $k\leq 2$. It follows from Corollary~\ref{3.2}.2 that $R$ is finitely stable.

(c) $\Rightarrow$ (a) Note that $R$ is a one-dimensional stable domain by Proposition~\ref{3.1}. It follows from Lemma~\ref{3.4}.2 that $\{0\}\not=\mathfrak m\subset (R\DP\widehat{R})\subset (R\DP\overline{R})$, and thus every ideal of $R$ is $2$-generated by Proposition~\ref{3.5}.4.
\end{proof}

A domain $R$ is said to be {\it weakly Krull} if
\[
R=\bigcap_{\mathfrak p\in\mathfrak X(R)} R_{\mathfrak p}\quad\text{and the intersection is of finite character,}
\]
which means that $\{\mathfrak p\in\mathfrak X(R)\mid x\not\in R_{\mathfrak p}^{\times}\}$ is finite for all $x\in R^{\bullet}$. Weakly Krull domains were introduced by Anderson, Anderson, Mott, and Zafrullah (\cite{An-An-Za92b, An-Mo-Za92}), and their multiplicative character was pointed out by Halter-Koch (\cite[Chapter 22]{HK98}).

\smallskip
\begin{theorem}\label{3.7}
Let $R$ be a domain with $(R\DP\widehat{R})\neq\{0\}$, and suppose that $R$ is either weakly Krull or Mori. Then $R$ is stable if and only if every ideal of $R$ is $2$-generated. If this holds, then $R$ is an order in a Dedekind domain.
\end{theorem}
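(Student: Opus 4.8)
The plan is to prove the two implications of the equivalence and then the final assertion; we may assume throughout that $R$ is not a field, the statement being trivial otherwise. The implication ``every ideal of $R$ is $2$-generated $\Rightarrow R$ stable'' needs nothing beyond Proposition~\ref{3.5}.4: if every ideal of $R$ is $2$-generated, then $R$ is a divisorial stable Mori domain, in particular stable, and this uses neither the conductor hypothesis nor the assumption that $R$ is weakly Krull or Mori.

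For the converse I would first treat the case that $R$ is a stable Mori domain. Since the integral closure is always contained in the complete integral closure, $\overline R\subset\widehat R$, hence $(R\DP\overline R)\supset(R\DP\widehat R)\neq\{0\}$, and the second assertion of Proposition~\ref{3.5}.4 applies directly: $R$ is divisorial and every ideal of $R$ is $2$-generated.

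It then remains to treat the case that $R$ is stable and weakly Krull with $(R\DP\widehat R)\neq\{0\}$, and the plan is to reduce this to the Mori case by showing that such an $R$ is Mori. First, $\widehat R$ is an overring of $R$, hence stable, and $(R\DP\widehat R)\neq\{0\}$ forces $\widehat R$ to be completely integrally closed (if $0\neq c\in(R\DP\widehat R)$ and $x\in\widehat{\widehat R}$ with $dx^n\in\widehat R$ for all $n$ and $d\in\widehat R$, then $cd\in R$ and $(cd)x^n\in R$ for all $n$, so $x\in\widehat R$); by the remark following the definition of stability, $\widehat R$ is a Dedekind domain. Next, $\overline R$ is integrally closed and stable (as an overring of $R$), hence a Pr\"ufer domain with finite character by Proposition~\ref{3.3}.3, and $\widehat{\overline R}=\widehat R$ because $R\subset\overline R\subset\widehat R$ and $\widehat R$ is completely integrally closed; so $\mathfrak g:=(\overline R\DP\widehat R)\supset(R\DP\widehat R)$ is a non-zero ideal of both $\overline R$ and $\widehat R$. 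I claim $\dim\overline R\le 1$. For any $0\neq g\in\mathfrak g$ one has $\overline R[1/g]=\widehat R[1/g]$, a localization of the Dedekind domain $\widehat R$; so for a maximal ideal $\mathfrak n$ of $\overline R$ with $\mathfrak g\not\subset\mathfrak n$, choosing $g\in\mathfrak g\setminus\mathfrak n$ makes the whole prime chain below $\mathfrak n$ live in $\overline R[1/g]=\widehat R[1/g]$, whence $\mathfrak n$ has height $\le 1$. For a maximal ideal $\mathfrak n\supset\mathfrak g$ (only finitely many, by finite character) with $\dim\overline R_{\mathfrak n}\ge 2$, the localization of $\widehat R$ at $\overline R\setminus\mathfrak n$ is a Dedekind overring of the valuation domain $\overline R_{\mathfrak n}$, hence contains $\widehat{\overline R_{\mathfrak n}}$; localizing $\mathfrak g\widehat R\subset\overline R$ at $\overline R\setminus\mathfrak n$ then gives $\mathfrak g\overline R_{\mathfrak n}\subset(\overline R_{\mathfrak n}\DP\widehat{\overline R_{\mathfrak n}})=\{0\}$, the last equality since a valuation domain of dimension $\ge 2$ has zero conductor into its complete integral closure; this contradicts $\mathfrak g\neq\{0\}$. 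Hence $\dim\overline R\le 1$, so by Proposition~\ref{3.3}.4 the integrally closed stable domain $\overline R$ is Dedekind; as $R\subset\overline R$ is an integral extension, $\dim R=1$, so $R$ is a one-dimensional stable domain, i.e.\ a stable Mori domain by Proposition~\ref{3.1}, and the Mori case shows that every ideal of $R$ is $2$-generated.

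Finally, suppose the equivalent conditions hold, so every ideal of $R$ is $2$-generated. By Proposition~\ref{3.5}.4 $R$ is a stable Mori domain, hence one-dimensional by Proposition~\ref{3.1}, and $R$ is noetherian since its ideals are generated by two elements. As above $(R\DP\overline R)\supset(R\DP\widehat R)\neq\{0\}$, so $\overline R\subset(R\DP(R\DP\overline R))$, which is a finitely generated $R$-module because $R$ is noetherian and $(R\DP\overline R)$ is a non-zero ideal; hence $\overline R$ is a finitely generated $R$-module. By the characterization recalled in Section~\ref{2}, $R$ is an order in a Dedekind domain (namely $\overline R$). The step I expect to be the main obstacle is the proof of $\dim\overline R\le 1$ in the weakly Krull case: the delicate point is the local analysis at the finitely many maximal ideals of $\overline R$ containing $\mathfrak g$, where one must identify the localization of $\widehat R$ at $\overline R\setminus\mathfrak n$ as an overring of $\overline R_{\mathfrak n}$ containing $\widehat{\overline R_{\mathfrak n}}$ and invoke the vanishing of the conductor of a valuation domain of dimension $\ge 2$ into its complete integral closure.
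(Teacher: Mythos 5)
Your ``$\Leftarrow$'' direction, your treatment of the Mori case, and your verification of the final assertion all match the paper's argument (the paper leaves the finite generation of $\overline R$ as an $R$-module implicit; your explicit check via $\overline R\subset c^{-1}R$ is fine). The problem is the weakly Krull case, where you diverge from the paper, and your argument there contains a genuine gap: the claim that a valuation domain of Krull dimension at least $2$ has zero conductor into its complete integral closure is false. Take $V$ a valuation domain with value group $\Z\time\Z$ ordered lexicographically (a rank-two discrete valuation domain). Its complete integral closure is $\widehat V=V_{\mathfrak p_1}$, where $\mathfrak p_1$ is the height-one prime, and one computes $(V\DP\widehat V)=\mathfrak p_1\ne\{0\}$. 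This $V$ is moreover strongly discrete, hence stable by Proposition~\ref{3.3}.2, and $\widehat V$ is a discrete valuation domain; so $V$ itself satisfies every hypothesis your dimension-bounding argument actually uses (stable, integrally closed, finite character, non-zero conductor into a Dedekind complete integral closure) while being two-dimensional. Since your proof of $\dim\overline R\le 1$ never invokes the weakly Krull hypothesis, it cannot be correct as written, and the flaw is exactly at the cited ``last equality.'' (Of course $V$ is not weakly Krull, so it is not a counterexample to the theorem, only to your route.)

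The paper reduces the weakly Krull case to the Mori case quite differently, and you may want to adopt that argument: for every $\mathfrak p\in\mathfrak X(R)$ the localization $R_{\mathfrak p}$ is one-dimensional (as $\mathfrak p$ has height one) and stable (as an overring of a stable domain), hence Mori by Proposition~\ref{3.1}; since $R$ is weakly Krull and all its localizations at height-one primes are Mori, $R$ is Mori by \cite[Lemma 5.1]{Ge-Ka-Re15a}. This uses the weakly Krull hypothesis where it is genuinely needed and avoids any analysis of $\overline R$ and $\widehat R$ before one already knows $R$ is one-dimensional.
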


\begin{proof}
If every ideal of $R$ is $2$-generated, then $R$ is stable by Proposition~\ref{3.5}.4. Conversely, let $R$ be stable.

Let us first suppose that $R$ is weakly Krull. Then, for every $\mathfrak p\in\mathfrak X(R)$, $R_{\mathfrak p}$ is one-dimensional and stable, whence Mori by Proposition~\ref{3.1}. Since $R$ is weakly Krull, this implies that $R$ is Mori by \cite[Lemma 5.1]{Ge-Ka-Re15a}.

Thus $R$ is Mori in both cases. Using Proposition~\ref{3.1} again we infer that $R$ is one-dimensional. Therefore, $\overline R$ is one-dimensional integrally closed and stable, whence $\overline R$ is a Dedekind domain by Proposition~\ref{3.3}.4. Since $(R\DP\overline R)\ne\{0\}$, Proposition~\ref{3.5}.4 implies that every ideal of $R$ is $2$-generated.
\end{proof}

\smallskip
\begin{corollary}\label{3.8}
Let $R$ be a seminormal G-domain and suppose that $R$ is either Mori or one-dimensional. Then $R$ is stable if and only if every ideal of $R$ is $2$-generated. If this holds, then $R$ is an order in a Dedekind domain.
\end{corollary}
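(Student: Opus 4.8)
The plan is to check the two hypotheses of Theorem~\ref{3.7} — that $R$ is Mori and that $(R\DP\widehat R)\neq\{0\}$ — and then quote it. The forward implication requires no extra work: if every ideal of $R$ is $2$-generated, then $R$ is a divisorial stable Mori domain by Proposition~\ref{3.5}.4, in particular stable, and none of the hypotheses on $R$ is needed. So assume from now on that $R$ is stable; we may also assume $R$ is not a field, since a field satisfies all the assertions trivially.

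First I would normalize $R$. If $R$ is Mori, then it is a stable Mori domain and hence one-dimensional by Proposition~\ref{3.1}; if $R$ is one-dimensional, then it is a one-dimensional stable domain and hence Mori by the same proposition. In either case $R$ is a one-dimensional seminormal stable Mori domain, and being one-dimensional Mori it has finite character. Now the G-domain hypothesis enters: unwinding the definition, being a G-domain means that the intersection of the non-zero prime ideals of $R$ is non-zero, and since $\dim R=1$ these are exactly the maximal ideals, so $\bigcap_{\mathfrak m\in\max(R)}\mathfrak m\neq\{0\}$. Pick $0\neq a$ in this intersection; by finite character $a$ lies in only finitely many maximal ideals, so $R$ has only finitely many maximal ideals. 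Thus $R$ is semilocal, say $\max(R)=\{\mathfrak m_1,\dots,\mathfrak m_n\}$, and $R=\bigcap_{i=1}^n R_{\mathfrak m_i}$.

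Next I would produce a non-zero element of $(R\DP\widehat R)$. For each $i$ the localization $R_{\mathfrak m_i}$ is a one-dimensional seminormal local domain that is not a field, so Lemma~\ref{3.4}.2 gives a non-zero $d_i\in(R_{\mathfrak m_i}\DP\widehat{R_{\mathfrak m_i}})$. Writing $d_i=e_i/t_i$ with $e_i\in R^{\bullet}$ and $t_i\in R\setminus\mathfrak m_i$ (so $t_i$ is invertible in $R_{\mathfrak m_i}$) yields $e_i\widehat{R_{\mathfrak m_i}}\subset R_{\mathfrak m_i}$; since $\widehat R\subset\widehat{R_{\mathfrak m_i}}$ for every $i$, the element $e:=e_1\cdots e_n\in R^{\bullet}$ satisfies $e\widehat R\subset R_{\mathfrak m_i}$ for each $i$ and therefore $e\widehat R\subset\bigcap_{i=1}^n R_{\mathfrak m_i}=R$, i.e.\ $0\neq e\in(R\DP\widehat R)$. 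Now $R$ is Mori with $(R\DP\widehat R)\neq\{0\}$, so Theorem~\ref{3.7} shows that every ideal of $R$ is $2$-generated and that $R$ is an order in a Dedekind domain, which finishes the proof.

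The only non-formal step is the reduction to the semilocal case: everything downstream of it is routine — the local conductor bound is exactly Lemma~\ref{3.4}.2, the gluing is an elementary computation that uses only the finiteness of $\max(R)$, and the final conclusion is Theorem~\ref{3.7}. The point that needs a word of care is that seminormality and one-dimensionality really descend to the localizations $R_{\mathfrak m_i}$, so that Lemma~\ref{3.4}.2 is legitimately applicable there; both facts are standard, and the paper already invokes seminormality of localizations in the proof of Theorem~\ref{3.6}.
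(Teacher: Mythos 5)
Your proof is correct, and the forward direction plus the final application of Theorem~\ref{3.7} match the paper. Where you genuinely diverge is in how the hypothesis $(R\DP\widehat R)\neq\{0\}$ of Theorem~\ref{3.7} is obtained. The paper disposes of this in one line by quoting an external result (\cite[Proposition 4.8]{G-HK-H-K03}), which states that \emph{every} seminormal G-domain has non-zero conductor to its complete integral closure, with no dimension or stability assumption; the rest of the corollary then follows immediately. You instead first use stability together with Proposition~\ref{3.1} to force $R$ to be one-dimensional and Mori, hence of finite character, then use the G-domain property to conclude that $R$ is semilocal, apply Lemma~\ref{3.4}.2 locally to get $(R_{\mathfrak m_i}\DP\widehat{R_{\mathfrak m_i}})\supset\mathfrak m_i R_{\mathfrak m_i}\neq\{0\}$, and glue via $R=\bigcap_i R_{\mathfrak m_i}$ and $\widehat R\subset\widehat{R_{\mathfrak m_i}}$. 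All of these steps check out (including the points you flag: seminormality and one-dimensionality do pass to the localizations, and the paper's monoid-theoretic definition of G-domain does yield that the intersection of the non-zero prime ideals is non-zero, which is all you use). What your route buys is self-containedness within the paper's own toolkit, at the cost of invoking the stability hypothesis already in the conductor step; what the paper's route buys is brevity and a conductor statement valid for all seminormal G-domains independently of stability. One small stylistic remark: since Theorem~\ref{3.7} already performs the ``stable plus Mori implies one-dimensional'' reduction internally, your normalization step partly duplicates work, but it is needed anyway for your semilocal reduction, so this is not a defect.
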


\begin{proof}
Since $R$ is a seminormal G-domain, $(R\DP\widehat R)\ne\{0\}$ by \cite[Proposition 4.8]{G-HK-H-K03}. Thus the claim follows from Theorem~\ref{3.7}.
\end{proof}

\smallskip
\begin{example}\label{3.9}~

1. There exist integrally closed one-dimensional local Mori domains which are neither valuation domains nor finitely stable. Let $K$ be a field, $Y$ an indeterminate over $K$, and $X$ an indeterminate over $K(Y)$. Then $R=K+XK(Y)\LK X\RK$ is an integrally closed one-dimensional local Mori domain which is not completely integrally closed. Thus, $R$ is not a valuation domain. By Proposition~\ref{3.3}.4, it is not stable because it is not a Dedekind domain, and hence it is not finitely stable by Proposition~\ref{3.1}.

\smallskip
2. There exists a seminormal two-dimensional local stable domain. Let $p\in\mathbb{Z}$ be a prime and $R=\mathbb{Z}_{(p)}+X\mathbb{R}\LK X\RK$. Since $\mathbb{R}\LK X\RK$ is a discrete valuation domain with maximal ideal $\mathfrak m=X\mathbb{R}\LK X\RK$ and also $\mathbb{Z}_{(p)}$ is a discrete valuation domain with maximal ideal $p\mathbb{Z}_{(p)}$; $R$ is a local two-dimensional domain with maximal ideal $\mathfrak n=pR$ (and $\{0\}\subset\mathfrak m\subset\mathfrak n$).
Now $R$ is stable as well by \cite[Theorem 2.6]{Ol01b}. Thus $R$ is not Mori by Proposition~\ref{3.1}.

Moreover, $R$ is seminormal. Indeed, we know $\mathbb{Z}_{(p)}$ is integrally closed in $\mathbb{Q}$ and $R\subset D=\mathbb{Q}+X\mathbb{R}\LK X\RK\subset\mathbb{R}\LK X\RK$. Let $t\in\mathsf q(R)=\mathbb{R}((X))$ with $t^2,t^3\in R$. Then $t^2,t^3\in\mathbb{R}\LK X\RK$, and hence $t\in\mathbb{R}\LK X\RK$ (since $\mathbb{R}\LK X\RK$ is completely integrally closed). We infer that $t_{0}\in\mathbb{R}$ and $t_{0}^2,t_{0}^3\in\mathbb{Q}$. If $t_{0}=0$, then $t\in D$. If $t_{0}\neq 0$, then $t_{0}=t_{0}^3t_{0}^{-2}\in\mathbb{Q}$, that is $t\in D$. In any case $D$ is seminormal. Now clearly $R$ is seminormal in $D$. Therefore, $R$ is seminormal.

3. There exists a two-dimensional stable Pr\"ufer domain $R$ which is a BF-domain, whence $R$ is a finitely stable BF-domain that is not Mori (cf. Proposition~\ref{3.1}). To see this we analyze an example given by Gabelli and Roitman. Let $K$ be a field and let $X$ and $Y$ be independent indeterminates over $K$. Set $S=K[Y]\setminus YK[Y]$ and let $R=S^{-1}(K[\{\frac{X(1-X)^n}{Y^n},\frac{Y^{n+1}}{(1-X)^n}\mid n\in\N_0\}])$. Set $T=\frac{1-X}{Y}$. It is shown in \cite[Example 5.13]{Ga-Ro19b} that $R$ is a two-dimensional stable Pr\"ufer domain which satisfies the ACCP. In particular, $R$ is archimedean. Moreover, it is shown in \cite[Example 5.13]{Ga-Ro19b} that $Y$ and $T$ are algebraically independent over $K$ and $R=S^{-1}(K[\{(1-YT)T^n,\frac{Y}{T^n}\mid n\in\N_0\}])$.

Next we prove that $S^{-1}(K[Y,T,T^{-1}])\subset\widehat{R}$. Observe that $T=\frac{YT}{Y}$, and hence $T$ and $T^{-1}$ are elements of the quotient field of $R$. Since $(1-YT)T^n\in R$ and $Y(T^{-1})^n\in R$ for every $n\in\N_0$, we infer that $\{T,T^{-1}\}\subset\widehat{R}$. Clearly, $K[Y]\subset R\subset\widehat{R}$, and thus $K[Y,T,T^{-1}]\subset\widehat{R}$. Since $S^{-1}=\{s^{-1}\mid s\in S\}\subset R\subset\widehat{R}$, this implies that $S^{-1}(K[Y,T,T^{-1}])\subset\widehat{R}$.

Since $Y$ and $T$ are algebraically independent over $K$, it follows that $K[Y,T]$ is factorial. Note that $K[Y,T,T^{-1}]$ is a quotient overring of $K[Y,T]$, and hence $K[Y,T,T^{-1}]$ is factorial. We infer that $S^{-1}(K[Y,T,T^{-1}])$ is factorial. Moreover, since $R\subset S^{-1}(K[Y,T,T^{-1}])$ and $S^{-1}(K[Y,T,T^{-1}])$ is completely integrally closed, we have that $\widehat{R}\subset S^{-1}(K[Y,T,T^{-1}])$. This implies that $\widehat{R}=S^{-1}(K[Y,T,T^{-1}])$ is factorial, and thus $\widehat{R}$ is a BF-domain. Since $R$ is archimedean, it follows that $\widehat{R}^{\times}\cap R=R^{\times}$, and hence $R$ is a BF-domain by \cite[Corollary 1.3.3]{Ge-HK06a}.
\end{example}

\section{Monoids of ideals and half-factoriality}\label{4}
\smallskip

In this section we study, for a finitary ideal system $r$ of a cancellative monoid $H$, algebraic and arithmetic properties of the {\it semigroup $\mathcal I_r(H)$ of $r$-ideals} and of the {\it semigroup $\mathcal I_r^*(R)$ of $r$-invertible $r$-ideals}. A focus is on the question when these monoids of $r$-ideals are half-factorial (other arithmetical properties of $\mathcal I_r^*(H)$, such as radical factoriality, were recently studied in \cite{Ol-Re20a}).
In Section~\ref{5}, we apply these results to monoids of divisorial ideals and to monoids of usual ring ideals of Mori domains.

Let $H$ be a cancellative monoid and $K$ a quotient group of $H$. An {\it ideal system} on $H$ is a map $r\colon\mathcal P(H)\to\mathcal P(H)$ such that the following conditions are satisfied for all subsets $X,Y\subset H$ and all $c\in H$.
\begin{itemize}
\item $X\subset X_r$.
\item $X\subset Y_r$ implies $X_r\subset Y_r$.
\item $cH\subset\{c\}_r$.
\item $cX_r=(cX)_r$.
\end{itemize}
We refer to \cite{HK98, HK11b} for background on ideal systems. Let $r$ be an ideal system on $H$. A subset $I\subset H$ is called an $r$-ideal if $I_r=I$. Furthermore, a subset $J\subseteq K$ is called a fractional $r$-ideal of $H$ if there is some $c\in H$ such that $cJ$ is an $r$-ideal of $H$. We denote by $\mathcal I_r(H)$ the set of all non-empty $r$-ideals, and we define $r$-multiplication by setting $I\cdot_r J=(IJ)_r$ for all $I,J\in\mathcal I_r(H)$. Then $\mathcal I_r(H)$ together with $r$-multiplication is a reduced semigroup with identity element $H$. Let $\mathcal F_r(H)$ denote the semigroup of non-empty fractional $r$-ideals, $\mathcal F_r(H)^{\times}$ the group of $r$-invertible fractional $r$-ideals, and $\mathcal I_r^*(H)=\mathcal F_r^{\times}(H)\cap\mathcal I_r(H)$ the cancellative monoid of $r$-invertible $r$-ideals of $H$ with $r$-multiplication. We denote by $\mathfrak X(H)$ the set of all non-empty minimal prime $s$-ideals of $H$, by $r$-$\spec(H)$ the set of all prime $r$-ideals of $H$, and by $r$-$\max(H)$ the set of all maximal $r$-ideals of $H$.
We say that $r$ is {\it finitary} if $X_r=\cup E_r$, where the union is taken over all finite subsets $E\subset X$.
For a subset $X\subset\mathsf q(H)$, we set
\[
X_s=XH,\ X_v=(X^{-1})^{-1}\quad\text{and }\quad X_t=\bigcup_{E\subset X,|E|<\infty} E_v\,.
\]
We will use the $s$-system, the $v$-system, and the $t$-system. For every ideal system $r$, we have $X_r\subset X_v$, and if $r$ is finitary, then $X_r\subset X_t$ for all $X\subset H$. We say that $H$ has {\it finite $r$-character} if each $x\in H$ is contained in only finitely many maximal $r$-ideals of $H$.

Let $R$ be a domain with quotient field $K$ and $r$ an ideal system on $R$ (clearly, $R^{\bullet}$ is a monoid and $r$ restricts to an ideal system $r'$ on $R^{\bullet}$ whence for every subset $I\subset R$ we have $I_r=(I^{\bullet})_{r'}\cup\{0\}$). We denote by $\mathcal I_r(R)$ the semigroup of non-zero $r$-ideals of $R$ and $\mathcal I_r^*(R)\subset\mathcal I_r(R)$ is the subsemigroup of $r$-invertible $r$-ideals of $R$. The usual ring ideals form an ideal system, called the $d$-system, and for these ideals we omit all suffices (i.e., $\mathcal I(R)=\mathcal I_d(R)$ and $\mathcal I^*(R)=\mathcal I_d^*(R)$).
For the following equivalent statements let $r$ be an ideal system on $R$ such that every $r$-ideal of $R$ is an ideal of $R$. We say that $R$ is a {\it Cohen-Kaplansky} domain if one of the following equivalent statements hold (\cite[Theorem 4.3]{An-Mo92} and \cite[Proposition 4.5]{Ge-Re19d}).
\begin{itemize}
\item[(a)] $R$ is atomic and has only finitely many atoms up to associates.
\item[(b)] $\mathcal I_r(R)$ is a finitely generated semigroup for some ideal system $r$ on $R$.
\item[(c)] $\mathcal I^*_r(R)$ is a finitely generated semigroup for some ideal system $r$ on $R$.
\item[(d)] $\overline R$ is a semilocal principal ideal domain, $\overline R/(R\DP\overline R)$ is finite, and $|\max(R)|= |\max(\overline R)|$.
\end{itemize}
Thus Corollary~\ref{3.2}.2 and Property (d) imply that a Cohen-Kaplansky domain $R$ is stable if and only $R\subset\overline R$ is a quadratic extension and $R$ has at most two maximal ideals.

\smallskip
\begin{lemma}\label{4.1}
Let $H$ be a cancellative monoid and let $r$ be a finitary ideal system on $H$ such that $\bigcap_{n\in\N_0} (\mathfrak m^n)_r=\emptyset$ for every $\mathfrak m\in r$-$\max(H)$. Then $\mathcal{I}_r(H)$ is unit-cancellative and if $H$ is of finite $r$-character, then $\mathcal{I}_r(H)$ is a \BF-monoid.
\end{lemma}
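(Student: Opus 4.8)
The plan is to establish the two assertions in turn, reusing for the second the $\mathfrak m$-adic bookkeeping developed for the first. Throughout I use the standard ideal-system identity $(XY)_r=(X_rY)_r=(X_rY_r)_r$ and the fact that every $r$-ideal is an $s$-ideal.

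\emph{Unit-cancellativity.} Since $\mathcal I_r(H)$ is reduced with identity $H$, I must show that $I,J\in\mathcal I_r(H)$ with $I=(IJ)_r$ forces $J=H$. From the identity above an easy induction gives $(IJ^n)_r=I$ for every $n\in\N_0$. Suppose $J\neq H$. As $r$ is finitary, the proper $r$-ideal $J$ lies in some $\mathfrak m\in r$-$\max(H)$, and since $\mathfrak m$ is an $s$-ideal we have $\mathfrak m^nH=\mathfrak m^n$; hence $IJ^n\subset H\mathfrak m^n=\mathfrak m^n$ and therefore $I=(IJ^n)_r\subset(\mathfrak m^n)_r$ for all $n$. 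Thus $\emptyset\neq I\subset\bigcap_{n\in\N_0}(\mathfrak m^n)_r=\emptyset$, a contradiction, so $J=H$. Note that finite $r$-character is not needed here.

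\emph{The BF-property.} Now suppose $H$ has finite $r$-character. For $I\in\mathcal I_r(H)$ and $\mathfrak m\in r$-$\max(H)$ put $\ord_{\mathfrak m}(I)=\sup\{n\in\N_0\mid I\subset(\mathfrak m^n)_r\}$. Since $I\neq\emptyset$ and $\bigcap_{n\in\N_0}(\mathfrak m^n)_r=\emptyset$, this is a non-negative integer, and it is positive precisely when $I\subset\mathfrak m$. Define $\ell(I)=\sum_{\mathfrak m\in r\text{-}\max(H)}\ord_{\mathfrak m}(I)$. Finite $r$-character guarantees that a non-empty $I$ lies in only finitely many maximal $r$-ideals, so $\ell(I)\in\N_0$; and since $r$ is finitary, $\ell(I)=0$ holds if and only if $I$ is contained in no maximal $r$-ideal, i.e. if and only if $I=H$. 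The key property is super-additivity: if $I\subset(\mathfrak m^a)_r$ and $J\subset(\mathfrak m^b)_r$ then $(IJ)_r\subset((\mathfrak m^a)_r(\mathfrak m^b)_r)_r=(\mathfrak m^{a+b})_r$, so $\ord_{\mathfrak m}(I\cdot_rJ)\geq\ord_{\mathfrak m}(I)+\ord_{\mathfrak m}(J)$ for every $\mathfrak m$, and summation gives $\ell(I\cdot_rJ)\geq\ell(I)+\ell(J)$.

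Given such an $\ell$, the BF-property is immediate by induction on $\ell(I)$. If $I\neq H$ is not an atom, then $I=A\cdot_rB$ with $A,B\neq H$, so $\ell(A),\ell(B)\geq 1$ and $\ell(A)+\ell(B)\leq\ell(I)$, whence $\ell(A),\ell(B)<\ell(I)$ and by induction $A$, $B$, and therefore $I$, are finite products of atoms (the case $\ell(I)=1$ forces $I$ to be an atom). Likewise, if $I=I_1\cdot_r\cdots\cdot_rI_k$ with all $I_j$ atoms, then $k\leq\sum_{j=1}^k\ell(I_j)\leq\ell(I)$, so $\mathsf L(I)$ is finite and $\mathcal I_r(H)$ is a BF-monoid. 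I do not expect a deep obstacle here; the only points requiring care are that $\ell$ is well-defined with values in $\N_0$ — which is exactly where finite $r$-character and the emptiness of $\bigcap_n(\mathfrak m^n)_r$ enter — and the identity $(X_rY_r)_r=(XY)_r$ behind super-additivity.
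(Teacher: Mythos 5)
Your argument is correct, and the second half takes a genuinely different (and arguably cleaner) route than the paper. The unit-cancellativity step is the same contradiction via $I\subset\bigcap_{n}(\mathfrak m^n)_r$, and your $\ord_{\mathfrak m}$ is exactly the quantity $g_{\mathfrak m}=\max\{\ell\mid N\subset(\mathfrak m^{\ell})_r\}$ that the paper uses to bound lengths. The difference lies in how atomicity is obtained: the paper first proves that $\mathcal I_r(H)$ satisfies the ACCP by an indirect subsequence argument (a failing chain $I_i=(I_{i+1}J_i)_r$ produces, after passing to a subsequence with all $J_i$ in one maximal $r$-ideal $\mathfrak m$, the contradiction $I_0\subset\bigcap_n(\mathfrak m^n)_r$) and then invokes the general fact that unit-cancellative monoids with the ACCP are atomic; you instead package everything into the superadditive, $\N_0$-valued function $\ell(I)=\sum_{\mathfrak m}\ord_{\mathfrak m}(I)$ vanishing exactly at $H$, from which atomicity and boundedness of sets of lengths both follow by one induction. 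Your version is self-contained (no appeal to the cited ACCP-implies-atomic lemma) and makes the role of the two hypotheses transparent: $\bigcap_n(\mathfrak m^n)_r=\emptyset$ gives finiteness of each $\ord_{\mathfrak m}$, finite $r$-character gives finiteness of the sum, and finitariness of $r$ guarantees that every proper $r$-ideal lies in a maximal one, so that $\ell$ detects units. The paper's route yields the ACCP as an explicit intermediate statement, which your argument only gives implicitly (of course every BF-monoid satisfies the ACCP). The ideal-system identities you rely on, $(X_rY_r)_r=(XY)_r$ and $\mathfrak m H=\mathfrak m$, are standard and used implicitly in the paper as well.
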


\begin{proof}
Let $I,J\in\mathcal{I}_r(H)$ be such that $(IJ)_r=I$. Assume that $J$ is proper. Then $J\subset\mathfrak m$ for some $\mathfrak m\in r$-$\max(H)$. It follows by induction that $(IJ^n)_r=I$ for all $n\in\N_0$, and hence $I\subset\bigcap_{n\in\N_0} (J^n)_r\subset\bigcap_{n\in\N_0} (\mathfrak m^n)_r$. Therefore, $I=\emptyset$, a contradiction. Consequently, $\mathcal{I}_r(H)$ is unit-cancellative.

Now let $H$ be of finite $r$-character. We have to show that $\mathcal{I}_r(H)$ is a BF-monoid.

First we show that $\mathcal{I}_r(H)$ is atomic. Since $\mathcal{I}_r(H)$ is unit-cancellative it remains to show by \cite[Lemma 3.1(1)]{Fa-Ge-Ka-Tr17} that $\mathcal{I}_r(H)$ satisfies the ACCP. Assume that $\mathcal{I}_r(H)$ does not satisfy the ACCP. Then there is a sequence $(I_i)_{i=0}^{\infty}$ of elements of $\mathcal{I}_r(H)$ such that $I_i\mathcal{I}_r(H)\subsetneq I_{i+1}\mathcal{I}_r(H)$ for all $i\in\N_0$. Consequently, there is some sequence $(J_i)_{i=0}^{\infty}$ of proper elements of $\mathcal{I}_r(H)$ such that $I_i=(I_{i+1}J_i)_r$ for all $i\in\N_0$. Note that $I_0\subset J_i$ for all $i\in\N_0$. Since $\{\mathfrak m\in r$-$\max(H)\mid I_0\subset\mathfrak m\}$ is finite, there is some $\mathfrak m\in r$-$\max(H)$ such that $\{i\in\N_0\mid J_i\subset\mathfrak m\}$ is infinite. By restricting to a suitable subsequence of $(I_i)_{i\in\N_0}$, we can therefore assume that $J_i\subset\mathfrak m$ for all $i\in\N_0$. Note that $I_0=(I_n\prod_{i=0}^{n-1} J_i)_r$ for every $n\in\N_0$, and thus $I_0\subset (\prod_{i=0}^{n-1} J_i)_r\subset (\mathfrak m^n)_r$ for every $n\in\N_0$. This implies that $I_0\subset\bigcap_{n\in\N_0}(\mathfrak m^n)_r$, and thus $I_0=\emptyset$, a contradiction.

Finally, we prove that $\mathsf L(N)$ is finite for each $N\in\mathcal{I}_r(H)$. Let $N\in\mathcal{I}_r(H)$ and set $\mathcal{M}=\{\mathfrak m\in r$-$\max(H)\mid N\subset\mathfrak m\}$. Observe that $\mathcal{M}$ is finite. For each $\mathfrak{m}\in\mathcal{M}$ set $g_{\mathfrak{m}}=\max\{\ell\in\N\mid N\subset (\mathfrak{m}^{\ell})_r\}$. It is sufficient to show that $n\leq\sum_{\mathfrak m\in\mathcal{M}} g_{\mathfrak m}$ for each $n\in\mathsf L(N)$. Let $n\in\mathsf L(N)$. Clearly, there is a finite sequence $(A_i)_{i=1}^n$ of atoms of $\mathcal{I}_r(H)$ such that $N=(\prod_{i=1}^n A_i)_r$. Since $[1,n]=\bigcup_{\mathfrak m\in\mathcal{M}}\{i\in [1,n]\mid A_i\subset\mathfrak m\}$, we infer that $n\leq\sum_{\mathfrak m\in\mathcal{M}} |\{i\in [1,n]\mid A_i\subset\mathfrak m\}|\leq\sum_{\mathfrak m\in\mathcal{M}} g_{\mathfrak m}$.
\end{proof}

Let $H$ be a cancellative monoid and $r$ a finitary ideal system on $H$. Observe that if $H$ is strictly $r$-noetherian (for the definition of strictly $r$-noetherian monoids we refer to \cite[8.4 Definition, page 87]{HK98}), then it follows from \cite[9.1 Theorem, page 94]{HK98} that $\bigcap_{n\in\N_0} (\mathfrak m^n)_r=\emptyset$ for every $\mathfrak m\in r$-$\max(H)$. Also note that if $H$ is a Mori monoid and $r$-$\max(H)=\mathfrak{X}(H)$, then $H$ is of finite $r$-character (this is an easy consequence of \cite[Theorem 2.2.5.1]{Ge-HK06a}).

\begin{proposition}\label{4.2}
Let $H$ be a finitely primary monoid of rank one, $\mathfrak m=H\setminus H^{\times}$, $\mathfrak q=\widehat{H}\setminus\widehat{H}^{\times}$, and let $r$ be a finitary ideal system on $H$.
\begin{enumerate}
\item The following statements are equivalent.
\begin{enumerate}
\item[(a)] $H$ is half-factorial.
\item[(b)] $u\widehat{H}=v\widehat{H}$ for all $u,v\in\mathcal{A}(H)$.
\item[(c)] $u\widehat{H}=\mathfrak q$ for all $u\in\mathcal{A}(H)$.
\end{enumerate}
\item The following statements are equivalent.
\begin{enumerate}
\item[(a)] $\mathcal{I}_r(H)$ is half-factorial.
\item[(b)] $A\widehat{H}=B\widehat{H}$ for all $A,B\in\mathcal{A}(\mathcal{I}_r(H))$.
\item[(c)] $A\widehat{H}=\mathfrak q$ for all $A\in\mathcal{A}(\mathcal{I}_r(H))$.
\item[(d)] If $k\in\N$ and $A_i\in\mathcal{A}(\mathcal{I}_r(H))$ for every $i\in [1,k]$, then $\prod_{i=1}^k A_i\not\subset (\mathfrak m^{k+1})_r$.
\item[(e)] $H$ is half-factorial and for every nonprincipal $A\in\mathcal{A}(\mathcal{I}_r(H))$ it follows that $A\not\subset (\mathfrak m^2)_r$.
\end{enumerate}
\end{enumerate}
\end{proposition}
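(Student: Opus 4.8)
The plan is to reduce the whole proposition to a single valuation. Since $H$ is finitely primary of rank one, write $H\subseteq F=F^{\times}\times\mathcal F(\{p\})$ with $\mathfrak m\subseteq pF$ and the exponent $\alpha$ chosen minimal, so that $p^{\alpha}F\subseteq H$; then $\widehat H=F$ (note $p^{\alpha}F\subseteq H\subseteq F$ and $F$ is completely integrally closed), $\mathfrak q=\widehat H\setminus\widehat H^{\times}=p\widehat H$, and a short computation using minimality of $\alpha$ yields $(H\DP\widehat H)=p^{\alpha}\widehat H$. Let $\mathsf v_p$ denote the $p$-adic valuation on $F$. The first step is to prove that
\[
\nu\colon\mathcal I_r(H)\to(\N_0,+),\qquad \nu(I)=\min\{\mathsf v_p(x):x\in I\},
\]
is a surjective monoid homomorphism with $\nu(I)\ge 1\iff I\subsetneq H$ and $I\widehat H=p^{\nu(I)}\widehat H$; in particular $A\widehat H=B\widehat H\iff\nu(A)=\nu(B)$ and $A\widehat H=\mathfrak q\iff\nu(A)=1$ for all atoms $A,B$. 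Only $\nu((IJ)_r)\ge\nu(I)+\nu(J)$ requires an argument: using that $r$ is finitary one reduces to finite sets $E\subseteq IJ$ and shows $E_v\subseteq p^{\nu(E)}\widehat H$ from $(H\DP\widehat H)=p^{\alpha}\widehat H$. The decisive consequence is that the $r$-ideals $P_n:=p^{n}\widehat H$ ($n\ge\alpha$) lie in $\mathcal I_r(H)$, satisfy $\nu(P_n)=n$, and obey the identity $I\cdot_r P_n=P_{\nu(I)+n}$ for every $I\in\mathcal I_r(H)$ and every $n\ge\alpha$ — because already as sets $I\cdot p^{n}\widehat H=p^{\nu(I)+n}\widehat H$, by choosing $x\in I$ with $\mathsf v_p(x)=\nu(I)$. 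For $H$ itself the analogous (trivial) fact is $u\widehat H=p^{\mathsf v_p(u)}\widehat H$. The technical heart of the proof is this preliminary step; once the identity for $P_n$ is available, the rest is bookkeeping.

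Part (1). Here (c)$\Rightarrow$(b) is trivial. For (b)$\Rightarrow$(a): $H$ is a BF-monoid, hence atomic, and if all atoms have the same $\mathsf v_p$-value $d$ then every factorization of $a$ has length $\mathsf v_p(a)/d$, so $H$ is half-factorial. For (a)$\Rightarrow$(c): let $L$ be the length function on the half-factorial monoid $H$ and $m=L(p^{\alpha})$; atoms have bounded $\mathsf v_p$-value (an element of value $\ge 2\alpha$ splits off $p^{\alpha}$), so put $d^{*}=\max$ and $\delta=\min$ over $\mathcal A(H)$. For an atom $u=\eta p^{d^{*}}$ the factorization $u^{n\alpha}=(\eta^{n\alpha}p^{\alpha})(p^{\alpha})^{nd^{*}-1}$ gives $L(\eta^{n\alpha}p^{\alpha})=n(\alpha-md^{*})+m$, which is $\ge 1$ for all $n$ only if $\alpha\ge md^{*}$; since a shortest factorization of $p^{\alpha}$ also gives $\alpha\le md^{*}$, we get $\alpha=md^{*}$. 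Then for an atom $u_0=\eta_0 p^{\delta}$, from $(\eta_0^{-\alpha}p^{\alpha})u_0^{\alpha}=p^{(\delta+1)\alpha}$ one obtains $L(\eta_0^{-\alpha}p^{\alpha})=(\delta+1)m-\alpha=m(\delta+1-d^{*})\ge 1$, forcing $\delta\ge d^{*}$, hence $\delta=d^{*}$. So all atoms share one value $d$; as $p^{\alpha},p^{\alpha+1}\in H$ must factor, $d\mid\gcd(\alpha,\alpha+1)=1$, i.e. $d=1$, which is (c).

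Part (2), algebraic preparation and the main equivalences. First, $\mathcal I_r(H)$ is atomic by Lemma~\ref{4.1}: finite $r$-character is automatic because the primary monoid $H$ has the single maximal $r$-ideal $\mathfrak m$, and $\bigcap_n(\mathfrak m^{n})_r=\emptyset$ since $\nu((\mathfrak m^{n})_r)=n\,\nu(\mathfrak m)\to\infty$; moreover $\mathfrak m\in\mathcal A(\mathcal I_r(H))$, for $\mathfrak m=(BC)_r$ with $B,C$ proper would give $\mathfrak m=(\mathfrak m^{2})_r=(\mathfrak m^{3})_r=\cdots$, against $\bigcap_n(\mathfrak m^{n})_r=\emptyset$. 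Now (c)$\Rightarrow$(a): if every atom has $\nu$-value $1$ then $\nu$ coincides with the length function, so lengths are unique. (a)$\Rightarrow$(c): in the half-factorial monoid $\mathcal I_r(H)$ with length function $L$ and $m:=L(P_{\alpha})$, take any atom $A$ with $\nu(A)=d$; the identity gives $A^{\cdot_r\alpha}\cdot_r P_{\alpha}=P_{(d+1)\alpha}=(P_{\alpha})^{\cdot_r(d+1)}$, and comparing lengths ($\alpha+m=(d+1)m$) forces $d=\alpha/m$; as this holds for every atom, $\nu=(\alpha/m)\,L$, so $\alpha/m$ divides $n=\nu(P_n)$ for all $n\ge\alpha$, whence $\alpha/m=1$ and $d=1$. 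Condition (b) is just "$\nu$ is constant on atoms", which yields (a) as in Part (1) and hence (c). For (d): since (a) and (c) give $\nu(\mathfrak m)=1$, a containment $\prod_{i=1}^k A_i\subseteq(\mathfrak m^{k+1})_r$ would give $k=\sum_i\nu(A_i)\ge\nu((\mathfrak m^{k+1})_r)=k+1$, so (d) holds; conversely, if $\mathcal I_r(H)$ were not half-factorial, an identity $(A_1\cdots A_k)_r=(A_1'\cdots A_l')_r$ with $k<l$ and all $A_i,A_j'$ atoms would have right-hand side contained in $(\mathfrak m^{l})_r\subseteq(\mathfrak m^{k+1})_r$, contradicting (d); so (d)$\Rightarrow$(a).

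Part (2), the condition (e). For (a)$\Rightarrow$(e): by (c) every atom of $\mathcal I_r(H)$ has $\nu$-value $1$, so $\nu(\mathfrak m)=1$ and no atom — in particular no non-principal one — lies in $(\mathfrak m^{2})_r$ (that would force $\nu\ge 2$); and $H$ is half-factorial, for if $u\in\mathcal A(H)$ had $\mathsf v_p(u)=d\ge 2$ then, factoring $uH=A_1\cdot_r\cdots\cdot_r A_d$ into atoms of $\mathcal I_r(H)$ and picking $x_i\in A_i$
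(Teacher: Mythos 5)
Your valuation-homomorphism framework is sound and genuinely different from the paper's route: you globalize everything through $\nu(I)=\min\{\mathsf v_p(x):x\in I\}$ and the auxiliary ideals $P_n=p^n\widehat H$ ($n\ge\alpha$), whereas the paper argues locally with the identity $I\widehat H=I_r\widehat H$, the conductor $(H\DP\widehat H)$, and a pigeonhole on $\mathsf L(a\varepsilon^n)$ for $\varepsilon\in\widehat H^{\times}$. The steps you do carry out check out: the key fact $\nu((IJ)_r)=\nu(I)+\nu(J)$ follows exactly as you indicate from $E_v\subseteq E_v\widehat H\subseteq (E\widehat H)_v=p^{\nu(E)}\widehat H$ (which needs $(\widehat H)_v=\widehat H$, i.e.\ $(H\DP\widehat H)\ne\emptyset$), the identity $I\cdot_r P_n=P_{\nu(I)+n}$ is correct, and your proofs of 1.(a)$\Rightarrow$1.(c) and of the equivalences 2.(a)$\Leftrightarrow$2.(b)$\Leftrightarrow$2.(c)$\Leftrightarrow$2.(d) via $\alpha=md^{*}$, $\delta=d^{*}$, $d\mid\gcd(\alpha,\alpha+1)$ and $d=\alpha/m$ are all valid. (One small overstatement: $\nu$ need not be surjective onto $\N_0$ --- for $H=\{1\}\cup p^2\mathcal F(\{p\})$ no ideal has $\nu$-value $1$ --- but you never actually use surjectivity, only that $\alpha$ and $\alpha+1$ are attained.)

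There is, however, a genuine gap: condition 2.(e) is never brought into the cycle of equivalences. Your proof of (a)$\Rightarrow$(e) breaks off in the middle of showing that $H$ is half-factorial (the intended argument --- pick $x_i\in A_i$ with $\mathsf v_p(x_i)=1$, note $x_1\cdots x_d\in uH$, conclude the cofactor lies in $F^{\times}\cap H=H^{\times}$ and so $u$ splits into $d\ge 2$ non-units --- would work, but it is not written), and the converse implication (e)$\Rightarrow$(c) is entirely absent. That converse is not a formality: it requires two observations you have not recorded, namely that a principal atom of $\mathcal I_r(H)$ is of the form $uH$ with $u\in\mathcal A(H)$ (so that Part 1 gives $u\widehat H=\mathfrak q$), and that for a non-principal atom $A$ any $v\in A\setminus(\mathfrak m^2)_r$ is automatically an atom of $H$ (a non-atom non-unit lies in $\mathfrak m^2\subseteq(\mathfrak m^2)_r$), whence $\mathfrak q=v\widehat H\subseteq A\widehat H\subseteq\mathfrak q$, i.e.\ $\nu(A)=1$. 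Until both directions involving (e) are supplied, the proposition as stated is not proved.
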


\begin{proof}
Since $H$ is finitely primary of rank one, there is some $q\in\mathfrak q$ such that $\mathfrak q=q\widehat{H}$.

\smallskip
1.(a) $\Rightarrow$ 1.(b) Let $u,v\in\mathcal{A}(H)$. There are some $k,\ell\in\N$ such that $u\widehat{H}=q^k\widehat{H}$ and $v\widehat{H}=q^{\ell}\widehat{H}$. It follows that $u^{\ell}\widehat{H}=v^k\widehat{H}$, and hence $u^{\ell}=v^k\varepsilon$ for some $\varepsilon\in\widehat{H}^{\times}$. Moreover, there is some $a\in (H\DP\widehat{H})$. Since $\mathsf L_H(a\varepsilon^n)\subset [0,{\rm v}_q(a)]$ for every $n\in\N_0$, there are some $n_1,n_2\in\N_0$ such that $n_1<n_2$ and $\mathsf L_H(a\varepsilon^{n_1})=\mathsf L_H(a\varepsilon^{n_2})$. Set $b=a\varepsilon^{n_1}$ and set $n=n_2-n_1$. Then $n\in\N$, $b,b\varepsilon^n\in H$ and $\mathsf L_H(b)=\mathsf L_H(b\varepsilon^n)$. There is some $h\in\N_0$ such that $\mathsf L_H(b)=\{h\}$. Note that $u^{n\ell}b=v^{nk}b\varepsilon^n$, and hence $\{n\ell+h\}=\mathsf L_H(u^{n\ell}b)=\mathsf L_H(v^{nk}b\varepsilon^n)=\{nk+h\}$. We infer that $\ell=k$, and hence $u\widehat{H}=q^k\widehat{H}=q^{\ell}\widehat{H}=v\widehat{H}$.

\smallskip
1.(b) $\Rightarrow$ 1.(c) Since $(H\DP\widehat{H})\not=\emptyset$, there is some $m\in\N$ such that $q^m,q^{m+1}\in H$. There is some $\ell\in\N$ such that $u\widehat{H}=q^{\ell}\widehat{H}$ for all $u\in\mathcal{A}(H)$. There are some $a,b\in\N$ such that $q^m$ is a product of $a$ atoms of $H$ and $q^{m+1}$ is a product of $b$ atoms of $H$. We infer that $q^m\widehat{H}=q^{a\ell}\widehat{H}$ and $q^{m+1}\widehat{H}=q^{b\ell}\widehat{H}$. This implies that $b\ell=m+1=a\ell+1$, and hence $\ell=1$ and $u\widehat{H}=\mathfrak q$.

\smallskip
1.(c) $\Rightarrow$ 1.(a) Let $k,\ell\in\N_0$, let $u_i\in\mathcal{A}(H)$ for every $i\in [1,k]$ and let $v_j\in\mathcal{A}(H)$ for every $j\in [1,\ell]$ be such that $\prod_{i=1}^k u_i=\prod_{j=1}^{\ell} v_j$. Then $\mathfrak q^k=\prod_{i=1}^k u_i\widehat{H}=\prod_{j=1}^{\ell} v_j\widehat{H}=\mathfrak q^{\ell}$. Consequently, $k=\ell$.

\smallskip
2. Note that $H$ is strongly primary and $r$-$\max(H)=\{\mathfrak m\}$. Therefore, $\bigcap_{n\in\N_0} (\mathfrak m^n)_r=\emptyset$. We infer by Lemma~\ref{4.1} that $\mathcal{I}_r(H)$ is a unit-cancellative atomic monoid. Since $H$ is $r$-local, we have that $\mathcal{A}(\mathcal{I}^*_r(H))=\{uH\mid u\in\mathcal{A}(H)\}$. Moreover, $\mathcal{I}^*_r(H)$ is a divisor-closed submonoid of $\mathcal{I}_r(H)$. Therefore, $\{uH\mid u\in\mathcal{A}(H)\}\subset\mathcal{A}(\mathcal{I}_r(H))$. Note that if $I$ is a non-empty $s$-ideal of $H$, then $I\widehat{H}=I_r\widehat{H}$ (since $I\widehat{H}=q^m\widehat{H}$ for some $m\in\N_0$, it follows that $q^m\widehat{H}=I\widehat{H}\subset I_r\widehat{H}\subset I_t\widehat{H}\subset (I\widehat{H})_t=(q^m\widehat{H})_t=q^m(\widehat{H})_t=q^m\widehat{H}$).

\smallskip
2.(a) $\Rightarrow$ 2.(b) Let $A,B\in\mathcal{A}(\mathcal{I}_r(H))$. There are some $k,\ell\in\N$ such that $A\widehat{H}=\mathfrak q^k$ and $B\widehat{H}=\mathfrak q^{\ell}$. This implies that $A^{\ell}\widehat{H}=B^k\widehat{H}$, and hence $(A^{\ell}(H\DP\widehat{H}))_r=(B^k(H\DP\widehat{H}))_r$. Since $(H\DP\widehat{H})$ is a non-empty $r$-ideal of $H$, there is some $m\in\N_0$ such that $\mathsf L((H\DP\widehat{H}))=\{m\}$. Therefore, $\{\ell+m\}=\mathsf L((A^{\ell}(H\DP\widehat{H}))_r)=\mathsf L((B^k(H\DP\widehat{H}))_r)=\{k+m\}$, and thus $\ell=k$. We infer that $A\widehat{H}=\mathfrak q^k=\mathfrak q^{\ell}=B\widehat{H}$.

\smallskip
2.(b) $\Rightarrow$ 2.(c) Since $(H\DP\widehat{H})\not=\emptyset$, there is some $m\in\N$ such that $q^m,q^{m+1}\in H$. There is some $\ell\in\N$ such that $A\widehat{H}=q^{\ell}\widehat{H}$ for all $A\in\mathcal{A}(\mathcal{I}_r(H))$. Since $\mathcal{I}_r(H)$ is atomic, there are some $a,b\in\N$ such that $q^mH$ is an $r$-product of $a$ atoms of $\mathcal{I}_r(H)$ and $q^{m+1}H$ is an $r$-product of $b$ atoms of $\mathcal{I}_r(H)$. This implies that $q^m\widehat{H}=q^{a\ell}\widehat{H}$ and $q^{m+1}\widehat{H}=q^{b\ell}\widehat{H}$. Therefore, $b\ell=m+1=a\ell+1$, and hence $\ell=1$ and $A\widehat{H}=\mathfrak q$.

\smallskip
2.(c) $\Rightarrow$ 2.(a) Let $k,\ell\in\N_0$, let $A_i\in\mathcal{A}(\mathcal{I}_r(H))$ for every $i\in [1,k]$ and let $B_j\in\mathcal{A}(\mathcal{I}_r(H))$ for every $j\in [1,\ell]$ be such that $(\prod_{i=1}^k A_i)_r=(\prod_{j=1}^{\ell} B_j)_r$. Then $\mathfrak q^k=(\prod_{i=1}^k A_i)_r\widehat{H}=(\prod_{j=1}^{\ell} B_j)_r\widehat{H}=\mathfrak q^{\ell}$. Therefore, $k=\ell$.

\smallskip
2.(c) $\Rightarrow$ 2.(d) Let $k\in\N$ and let $A_i\in\mathcal{A}(\mathcal{I}_r(H))$ for every $i\in [1,k]$. Assume that $\prod_{i=1}^k A_i\subset (\mathfrak m^{k+1})_r$. Note that $\mathfrak m\in\mathcal{A}(\mathcal{I}_r(H))$ (since $\mathcal{I}_r(H)$ is unit-cancellative). Therefore, $A_i\widehat{H}=\mathfrak m\widehat{H}=\mathfrak q$ for all $i\in [1,k]$, and thus $\mathfrak q^k=(\prod_{i=1}^k A_i)_r\widehat{H}\subset (\mathfrak m^{k+1})_r\widehat{H}=\mathfrak q^{k+1}$, a contradiction.

\smallskip
2.(d) $\Rightarrow$ 2.(e) It remains to show that $H$ is half-factorial. Let $k,\ell\in\N$, let $u_i\in\mathcal{A}(H)$ for every $i\in [1,k]$ and let $v_j\in\mathcal{A}(H)$ for every $j\in [1,\ell]$ be such that $\prod_{i=1}^k u_i=\prod_{j=1}^{\ell} v_j$. Observe that $u_iH,v_jH\in\mathcal{A}(\mathcal{I}_r(H))$ for all $i\in [1,k]$ and $j\in [1,\ell]$. We infer that $\prod_{i=1}^k u_i\not\in (\mathfrak m^{\ell+1})_r$ and $\prod_{j=1}^{\ell} v_j\not\in (\mathfrak m^{k+1})_r$. Therefore, $k<\ell+1$ and $\ell<k+1$, and hence $k=\ell$.

\smallskip
2.(e) $\Rightarrow$ 2.(c) Let $A\in\mathcal{A}(\mathcal{I}_r(H))$.

\smallskip
Case 1. $A$ is principal. Then $A=uH$ for some $u\in\mathcal{A}(H)$. By 1. we have that $A\widehat{H}=u\widehat{H}=\mathfrak q$.

\smallskip
Case 2. $A$ is not principal. Then $A\not\subset (\mathfrak m^2)_r$, and hence there is some $v\in A\setminus (\mathfrak m^2)_r$. Observe that $v\in\mathcal{A}(H)$. It follows from 1. that $\mathfrak q=v\widehat{H}\subset A\widehat{H}\subset\mathfrak q$, and thus $A\widehat{H}=\mathfrak q$.
\end{proof}

Observe that some of the semigroups (e.g. $\mathcal{I}_r(H)$) in the following result may not always be unit-cancellative. In that case, we apply the original definitions for being an atom or being half-factorial to commutative semigroups with identity (which are not necessarily unit-cancellative).

\begin{proposition}\label{4.3}
Let $H$ be a cancellative monoid and $r$ be a finitary ideal system on $H$ such that $H$ is of finite $r$-character and $r$-$\max(H)=\mathfrak X(H)$.
\begin{enumerate}
\item $\mathcal{I}_r(H)\cong\coprod_{\mathfrak m\in\mathfrak X(H)}\mathcal{I}_{r_{\mathfrak m}}(H_{\mathfrak m})$ and $\mathcal{I}^*_r(H)\cong\coprod_{\mathfrak m\in\mathfrak X(H)}\mathcal{I}^*_{r_{\mathfrak m}}(H_{\mathfrak m})$.
\item $\mathcal{I}_r(H)$ is half-factorial if and only if $\mathcal{I}_{r_{\mathfrak m}}(H_{\mathfrak m})$ is half-factorial for every $\mathfrak m\in\mathfrak X(H)$ and $\mathcal{I}^*_r(H)$ is half-factorial if and only if $H_{\mathfrak m}$ is half-factorial for every $\mathfrak m\in\mathfrak X(H)$.
\item If $A\in\mathcal{A}(\mathcal{I}_r(H))$, then $\sqrt{A}\in\mathfrak X(H)$.
\item For every $\mathfrak m\in\mathfrak X(H)$ we have that $\mathcal{A}(\mathcal{I}_{r_{\mathfrak m}}(H_{\mathfrak m}))=\{A_{\mathfrak m}\mid A\in\mathcal{A}(\mathcal{I}_r(H)),A\subset\mathfrak m\}$.
\end{enumerate}
\end{proposition}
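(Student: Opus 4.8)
The plan is to make part 1 the engine and to read parts 2--4 off from it, together with elementary facts about coproducts of reduced semigroups and about primary monoids. First I would set up the localizations: for $\mathfrak m\in\mathfrak X(H)$ (which by hypothesis equals $r$-$\max(H)$) the complement $T_{\mathfrak m}=H\setminus\mathfrak m$ is a saturated multiplicatively closed set, $r$ induces a finitary ideal system $r_{\mathfrak m}$ on $H_{\mathfrak m}=T_{\mathfrak m}^{-1}H$, for every (fractional) $r$-ideal $I$ of $H$ the set $IH_{\mathfrak m}$ is a (fractional) $r_{\mathfrak m}$-ideal of $H_{\mathfrak m}$, and $(I\cdot_rJ)H_{\mathfrak m}=(IH_{\mathfrak m})\cdot_{r_{\mathfrak m}}(JH_{\mathfrak m})$; all of this is the standard localization calculus of finitary ideal systems (\cite{HK98}). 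Define $\varphi\colon\mathcal I_r(H)\to\prod_{\mathfrak m\in\mathfrak X(H)}\mathcal I_{r_{\mathfrak m}}(H_{\mathfrak m})$ by $\varphi(I)=(IH_{\mathfrak m})_{\mathfrak m}$. If $I\neq H$ then $I$ contains a non-unit, which lies in only finitely many maximal $r$-ideals by finite $r$-character, and $IH_{\mathfrak m}=H_{\mathfrak m}$ exactly when $I\not\subset\mathfrak m$; hence $\varphi$ lands in the coproduct $\coprod_{\mathfrak m}\mathcal I_{r_{\mathfrak m}}(H_{\mathfrak m})$. It is a monoid homomorphism by the displayed compatibility, and it is bijective via the local--global identity $I=\bigcap_{\mathfrak m\in\mathfrak X(H)}(IH_{\mathfrak m}\cap H)$ for $r$-ideals: injectivity is immediate, and for surjectivity one checks that for a family $(J_{\mathfrak m})_{\mathfrak m}$ with $J_{\mathfrak m}=H_{\mathfrak m}$ for almost all $\mathfrak m$ the set $\bigcap_{\mathfrak m}(J_{\mathfrak m}\cap H)$ is an $r$-ideal whose localizations recover the family. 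Finally $r$-invertibility is a local property, so $\varphi$ restricts to $\mathcal I^*_r(H)\cong\coprod_{\mathfrak m}\mathcal I^*_{r_{\mathfrak m}}(H_{\mathfrak m})$. The step I expect to be the main obstacle is pinning down the local--global identity in exactly this generality and locating the right reference; this is where the hypothesis $r$-$\max(H)=\mathfrak X(H)$, forcing the relevant localizations to be ``independent'', is essential.

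For parts 4 and 3 I would use that in a coproduct of reduced semigroups the atoms are precisely the elements supported on a single coordinate whose entry there is an atom of the corresponding factor. Transporting this through $\varphi$: for $A\in\mathcal A(\mathcal I_r(H))$ there is a unique $\mathfrak m_0\in\mathfrak X(H)$ with $A\subset\mathfrak m_0$ and $A\not\subset\mathfrak m$ for every other $\mathfrak m\in\mathfrak X(H)$, and $AH_{\mathfrak m_0}\in\mathcal A(\mathcal I_{r_{\mathfrak m_0}}(H_{\mathfrak m_0}))$; conversely every atom of $\mathcal I_{r_{\mathfrak m}}(H_{\mathfrak m})$ arises this way, which is exactly part 4. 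For part 3, note first that every $r$-ideal is an $s$-ideal (immediate from $cH\subset\{c\}_r$), so $\sqrt A$ is defined; since $A$ is a proper nonempty $s$-ideal contained in the prime $\mathfrak m_0$, we get $\sqrt A\subset\mathfrak m_0$. For the reverse inclusion I would argue that, $\mathfrak m_0$ being simultaneously a maximal $r$-ideal and a minimal nonempty prime $s$-ideal, the only nonempty prime $s$-ideal of $H$ contained in $\mathfrak m_0$ is $\mathfrak m_0$ itself; hence $\mathfrak m_0H_{\mathfrak m_0}$ is the unique nonempty prime $s$-ideal of $H_{\mathfrak m_0}$, so $H_{\mathfrak m_0}$ is primary and $\sqrt{AH_{\mathfrak m_0}}=\mathfrak m_0H_{\mathfrak m_0}$. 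Thus for each $x\in\mathfrak m_0$ there is $n$ with $x^n\in AH_{\mathfrak m_0}$, while $x^n\in AH_{\mathfrak m}=H_{\mathfrak m}$ trivially for $\mathfrak m\neq\mathfrak m_0$, and the local--global identity then gives $x^n\in A$, i.e.\ $x\in\sqrt A$. Hence $\sqrt A=\mathfrak m_0\in\mathfrak X(H)$.

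For part 2, recall that for reduced semigroups a divisor-closed submonoid of an atomic one is atomic, each factor embeds in $\coprod_iM_i$ as a divisor-closed submonoid, and the set of lengths of an element of $\coprod_iM_i$ is the sumset of the sets of lengths of its coordinates; hence $\coprod_iM_i$ is half-factorial if and only if every $M_i$ is. Combined with part 1 this gives at once that $\mathcal I_r(H)$ is half-factorial if and only if every $\mathcal I_{r_{\mathfrak m}}(H_{\mathfrak m})$ is, and it reduces the second equivalence to showing that $\mathcal I^*_{r_{\mathfrak m}}(H_{\mathfrak m})$ is half-factorial if and only if $H_{\mathfrak m}$ is. But $H_{\mathfrak m}$ is $r_{\mathfrak m}$-local (its only maximal $r_{\mathfrak m}$-ideal is $\mathfrak mH_{\mathfrak m}$), so every $r_{\mathfrak m}$-invertible $r_{\mathfrak m}$-ideal is principal by the standard local argument --- from $H_{\mathfrak m}=I\cdot_{r_{\mathfrak m}}I^{-1}$ and finitariness one gets $H_{\mathfrak m}=\{a_1x_1,\ldots,a_kx_k\}_{r_{\mathfrak m}}$ with $a_j\in I$ and $x_j\in I^{-1}$, locality forces some $a_jx_j$ to be a unit, and then $I=a_jH_{\mathfrak m}$ --- whence $\mathcal I^*_{r_{\mathfrak m}}(H_{\mathfrak m})\cong(H_{\mathfrak m})_{\red}$, which, half-factoriality depending only on the associated reduced monoid, is half-factorial exactly when $H_{\mathfrak m}$ is.
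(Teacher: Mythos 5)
Your proposal is correct and follows essentially the same route as the paper: the coproduct decomposition of part 1, obtained by localizing at the elements of $\mathfrak X(H)=r$-$\max(H)$ and using finite $r$-character, is the engine; part 2 rests on the fact that $H_{\mathfrak m}$ is $r_{\mathfrak m}$-local so that $\mathcal I^*_{r_{\mathfrak m}}(H_{\mathfrak m})\cong (H_{\mathfrak m})_{\red}$; and parts 3 and 4 are read off from the local structure. The only cosmetic differences are that the paper realizes the inverse of the isomorphism as the $r$-product $(\prod_{\mathfrak q}(I_{\mathfrak q}\cap H))_r$ of the primary components rather than their intersection, and verifies parts 3 and 4 by direct computation with that identity instead of invoking the general description of atoms in a coproduct of reduced semigroups and the primariness of $H_{\mathfrak m}$.
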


\begin{proof}
Claim: For every $I\in\mathcal{I}_r(H)$ it follows that $I=(\prod_{\mathfrak q\in\mathfrak X(H)} (I_{\mathfrak q}\cap H))_r$.

Proof of the claim: Let $I\in\mathcal{I}_r(H)$. Since $H$ is of finite $r$-character, it follows that $I_{\mathfrak q}\cap H=H$ for all but finitely many $\mathfrak q\in\mathfrak X(H)$. Note that if $\mathfrak q\in\mathfrak X(H)$ and $I\subset\mathfrak q$, then $I_{\mathfrak q}\cap H$ is a $\mathfrak q$-primary $r$-ideal of $H$, and $(I_{\mathfrak q}\cap H)_{\mathfrak q}=I_{\mathfrak q}$. Therefore, $((\prod_{\mathfrak q\in\mathfrak X(H)} (I_{\mathfrak q}\cap H))_r)_{\mathfrak m}=(\prod_{\mathfrak q\in\mathfrak X(H)} (I_{\mathfrak q}\cap H)_{\mathfrak m})_{r_{\mathfrak m}}=I_{\mathfrak m}$. Consequently, $I=(\prod_{\mathfrak q\in\mathfrak X(H)} (I_{\mathfrak q}\cap H))_r$.

1. Let $f\colon\mathcal{I}_r(H)\rightarrow\coprod_{\mathfrak m\in\mathfrak X(H)}\mathcal{I}_{r_{\mathfrak m}}(H_{\mathfrak m})$ be defined by $f(I)=(I_{\mathfrak m})_{\mathfrak m\in\mathfrak X(H)}$ for every $I\in\mathcal{I}_r(H)$. Since $H$ is of finite $r$-character it is clear that $f$ is well-defined. It is straightforward to show that $f$ is a monoid homomorphism. If $I,J\in\mathcal{I}_r(H)$ are such that $I_{\mathfrak m}=J_{\mathfrak m}$ for all $\mathfrak m\in\mathfrak X(H)$, then $I=\bigcap_{\mathfrak m\in r\textnormal{-}\max(H)} I_{\mathfrak m}=\bigcap_{\mathfrak m\in r\textnormal{-}\max(H)} J_{\mathfrak m}=J$. Therefore, $f$ is injective. It remains to show that $f$ is surjective. Let $(I_{\mathfrak m})_{\mathfrak m\in\mathfrak X(H)}\in\coprod_{\mathfrak m\in\mathfrak X(H)}\mathcal{I}^*_{r_{\mathfrak m}}(H_{\mathfrak m})$. Set $I=(\prod_{\mathfrak m\in\mathfrak X (H)} (I_{\mathfrak m}\cap H))_r$. Then $I\in\mathcal{I}_r(H)$ and $(I_{\mathfrak q}\cap H)_{\mathfrak q}=I_{\mathfrak q}$ for every $\mathfrak q\in\mathfrak X(H)$. Therefore, $f$ is surjective. If $I\in\mathcal{I}^*_r(H)$, then $I_{\mathfrak m}\in\mathcal{I}^*_{r_{\mathfrak m}}(H_{\mathfrak m})$ for every $\mathfrak m\in\mathfrak X(H)$, and thus $f\mid_{\mathcal{I}^*_r(H)}\colon\mathcal{I}^*_r(H)\rightarrow\coprod_{\mathfrak m\in\mathfrak X(H)}\mathcal{I}^*_{r_{\mathfrak m}}(H_{\mathfrak m})$ is a monoid isomorphism.

\smallskip
2. It is an immediate consequence of 1. that $\mathcal{I}_r(H)$ is half-factorial if and only if $\mathcal{I}_{r_{\mathfrak m}}(H_{\mathfrak m})$ is half-factorial for every $\mathfrak m\in\mathfrak X(H)$ and $\mathcal{I}^*_r(H)$ is half-factorial if and only if $\mathcal{I}^*_{r_{\mathfrak m}}(H_{\mathfrak m})$ is half-factorial for every $\mathfrak m\in\mathfrak X(H)$. Note that if $\mathfrak m\in\mathfrak X(H)$, then $H_{\mathfrak m}$ is $r_{\mathfrak m}$-local, and hence $\mathcal{I}^*_{r_{\mathfrak m}}(H_{\mathfrak m})=\{xH_{\mathfrak m}\mid x\in H_{\mathfrak m}^{\bullet}\}$. Clearly, $\{xH_{\mathfrak m}\mid x\in H_{\mathfrak m}^{\bullet}\}\cong (H_{\mathfrak m}^{\bullet})_{\rm red}$ is half-factorial if and only if $H_{\mathfrak m}$ is half-factorial.

\smallskip
3. Let $A\in\mathcal{A}(\mathcal{I}_r(H))$. Then $A\subset\mathfrak m$ for some $\mathfrak m\in\mathfrak X(H)$. Set $J=(\prod_{\mathfrak q\in\mathfrak X(H)\setminus\{\mathfrak m\}} (A_{\mathfrak q}\cap H))_r$. We infer by the claim that $A=(J(A_{\mathfrak m}\cap H))_r$. Since $A_{\mathfrak m}\cap H$ is a proper $r$-ideal of $H$ this implies that $A=A_{\mathfrak m}\cap H$. Since $A_{\mathfrak m}$ is $\mathfrak m_{\mathfrak m}$-primary, we have that $A_{\mathfrak m}\cap H$ is $\mathfrak m$-primary, and thus $\sqrt{A}=\mathfrak m$.

\smallskip
4. Let $\mathfrak m\in\mathfrak X(H)$. First let $B\in\mathcal{A}(\mathcal{I}_{r_{\mathfrak m}}(H_{\mathfrak m}))$. Set $A=B\cap H$. Then $A$ is a proper $r$-ideal of $H$ and $B=A_{\mathfrak m}$. It remains to show that $A\in\mathcal{I}_r(H)$. Let $I,J\in\mathcal{I}_r(H)$ be such that $A=(IJ)_r$. Then $B=(I_{\mathfrak m}J_{\mathfrak m})_{r_{\mathfrak m}}$, and hence $I_{\mathfrak m}=H_{\mathfrak m}$ or $J_{\mathfrak m}=H_{\mathfrak m}$. Without restriction let $I_{\mathfrak m}=H_{\mathfrak m}$. Then $I\not\subset\mathfrak m$. Since $A$ is $\mathfrak m$-primary and $A\subset I$, this implies that $I=H$.

Now let $B\in\mathcal{A}(\mathcal{I}_r(H))$ be such that $B\subset\mathfrak m$. Let $I,J\in\mathcal{I}_{r_{\mathfrak m}}(H_{\mathfrak m})$ be such that $B_{\mathfrak m}=(IJ)_{r_{\mathfrak m}}$. It is straightforward to check $r$-locally that $B=((I\cap H)(J\cap H))_r$. Note that $I\cap H,J\cap H\in\mathcal{I}_r(H)$, and hence $I\cap H=H$ or $J\cap H=H$. Without restriction let $I\cap H=H$. Consequently, $I=H_{\mathfrak m}$.
\end{proof}

\begin{theorem}\label{4.4}
Let $H$ be a cancellative monoid and let $r$ be a finitary ideal system on $H$ such that $H$ is of finite $r$-character and $H_{\mathfrak m}$ is finitely primary for every $\mathfrak m\in r$-$\max(H)$. Then $\mathcal{I}_r(H)$ is half-factorial if and only if $\mathcal{I}^*_r(H)$ is half-factorial and for every $A\in\mathcal{A}(\mathcal{I}_r(H))\setminus\mathcal{I}^*_r(H)$ we have that $A\not\subset ((\sqrt{A})^2)_r$.
\end{theorem}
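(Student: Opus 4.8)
The plan is to reduce the equivalence to the case where $H$ is finitely primary by means of Proposition~\ref{4.3}, and then to settle that local case via Proposition~\ref{4.2}. First I would record that the hypotheses force $r$-$\max(H)=\mathfrak X(H)$, which is what makes Proposition~\ref{4.3} applicable: each $H_{\mathfrak m}$ with $\mathfrak m\in r$-$\max(H)$ is finitely primary, hence primary, so $\mathfrak m H_{\mathfrak m}$ is its only nonempty prime $s$-ideal; localizing shows both that every maximal $r$-ideal is a minimal prime $s$-ideal and that a minimal prime $s$-ideal contained in a maximal $r$-ideal $\mathfrak m$ must equal $\mathfrak m$. Granting this, Proposition~\ref{4.3}.1 yields $\mathcal I_r(H)\cong\coprod_{\mathfrak m\in\mathfrak X(H)}\mathcal I_{r_{\mathfrak m}}(H_{\mathfrak m})$ and $\mathcal I_r^*(H)\cong\coprod_{\mathfrak m\in\mathfrak X(H)}\mathcal I_{r_{\mathfrak m}}^*(H_{\mathfrak m})$; since a coproduct of monoids is half-factorial precisely when every cofactor is (atoms and sets of lengths are formed componentwise) and $\mathcal I_{r_{\mathfrak m}}^*(H_{\mathfrak m})\cong (H_{\mathfrak m})_{\red}$ because $H_{\mathfrak m}$ is $r_{\mathfrak m}$-local, the two half-factoriality statements of the theorem split over $\mathfrak X(H)$ (this is Proposition~\ref{4.3}.2).

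Next I would localize the condition on the non-invertible atoms. By Proposition~\ref{4.3}.3 and~\ref{4.3}.4, for each $\mathfrak m\in\mathfrak X(H)$ the assignment $A\mapsto A_{\mathfrak m}$ restricts to a bijection from $\{A\in\mathcal A(\mathcal I_r(H))\mid\sqrt A=\mathfrak m\}$ onto $\mathcal A(\mathcal I_{r_{\mathfrak m}}(H_{\mathfrak m}))$, with inverse $B\mapsto B\cap H$, and each such atom $A$ is $\mathfrak m$-primary. Using that localization at $\mathfrak m$ is compatible with $r$-closure and that $\mathfrak m$-primary $r$-ideals are recovered from their localization at $\mathfrak m$, one deduces $A\in\mathcal I_r^*(H)$ if and only if $A_{\mathfrak m}\in\mathcal I_{r_{\mathfrak m}}^*(H_{\mathfrak m})$, and $A\subset((\sqrt A)^2)_r$ if and only if $A_{\mathfrak m}\subset((\sqrt{A_{\mathfrak m}})^2)_{r_{\mathfrak m}}$. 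Hence the right-hand side of the theorem holds for $(H,r)$ if and only if it holds for $(H_{\mathfrak m},r_{\mathfrak m})$ for every $\mathfrak m\in\mathfrak X(H)$, and the same is true for the left-hand side; so it suffices to prove the theorem when $H$ is finitely primary and $r$-local. In that case $\mathcal I_r^*(H)\cong H_{\red}$, $\sqrt A=\mathfrak m:=H\setminus H^{\times}$ for every $A\in\mathcal A(\mathcal I_r(H))$, and $\mathcal A(\mathcal I_r(H))\setminus\mathcal I_r^*(H)$ is the set of non-principal atoms of $\mathcal I_r(H)$, so what remains to be shown is that $\mathcal I_r(H)$ is half-factorial if and only if $H$ is half-factorial and $A\not\subset(\mathfrak m^2)_r$ for every non-principal $A\in\mathcal A(\mathcal I_r(H))$.

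Finally I would split on the rank of the finitely primary monoid $H$. If $H$ has rank one, this is exactly the equivalence (a)$\Leftrightarrow$(e) of Proposition~\ref{4.2}.2. If $H$ has rank at least two, then $H$ has infinite elasticity by \cite[Theorem 3.1.5.2]{Ge-HK06a} and hence is not half-factorial; since $\mathcal I_r^*(H)\cong H_{\red}$ is a divisor-closed submonoid of $\mathcal I_r(H)$ (an $r$-ideal dividing an $r$-invertible $r$-ideal is again $r$-invertible), $\mathcal I_r(H)$ is not half-factorial either, so both sides of the asserted equivalence fail and it holds trivially. The step I expect to be most delicate is the localization in the second paragraph: one must check carefully that non-invertibility of an atom and the containment $A\subset((\sqrt A)^2)_r$ are genuinely local conditions, and that $r$-$\max(H)=\mathfrak X(H)$ so that Proposition~\ref{4.3} is available. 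Beyond Propositions~\ref{4.2} and~\ref{4.3}, the only genuinely new ingredient is the rank dichotomy: rank one is handled by Proposition~\ref{4.2}.2, while in higher rank half-factoriality is obstructed on both sides of the equivalence by infinite elasticity.
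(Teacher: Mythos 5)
Your proposal is correct and follows essentially the same route as the paper: reduce to the local case via Proposition~\ref{4.3} (splitting half-factoriality and the atom condition over $\mathfrak X(H)$, using finite $r$-character to see that an atom is $r$-invertible iff its localization is principal, and $\mathfrak m$-primariness to transfer the containment $A\subset((\sqrt A)^2)_r$), and then settle the local case with Proposition~\ref{4.2}.2, the rank-one reduction coming in both treatments from the infinite elasticity of finitely primary monoids of rank at least two. Your explicit verification that the hypotheses force $r$-$\max(H)=\mathfrak X(H)$ is a point the paper leaves implicit, and is a worthwhile addition.
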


\begin{proof} First let $\mathcal{I}_r(H)$ be half-factorial. Since $\mathcal{I}^*_r(H)$ is a divisor-closed submonoid of $\mathcal{I}_r(H)$ we have that $\mathcal{I}^*_r(H)$ is half-factorial. Let $\mathfrak m\in r$-$\max(H)$. It follows by Proposition~\ref{4.3}.2 that $\mathcal{I}_{r_{\mathfrak m}}(H_{\mathfrak m})$ and $H_{\mathfrak m}$ are half-factorial. Therefore, $H_{\mathfrak m}$ is finitely primary of rank one by \cite[Theorem 3.1.5]{Ge-HK06a}. We infer by Proposition~\ref{4.2}.2 that for every nonprincipal $B\in\mathcal{A}(\mathcal{I}_{r_{\mathfrak m}}(H_{\mathfrak m}))$ we have that $B\not\subset (\mathfrak m_{\mathfrak m}^2)_{r_{\mathfrak m}}$. We infer by Proposition~\ref{4.3}.2 that $\mathcal{I}^*_r(H)$ is half-factorial. Let $A\in\mathcal{A}(\mathcal{I}_r(H))\setminus\mathcal{I}^*_r(H)$. Then $\sqrt{A}\in r$-$\max(H)$ by Proposition~\ref{4.3}.3. Without restriction let $\sqrt{A}=\mathfrak m$. It follows by Proposition~\ref{4.3} that $A_{\mathfrak m}\in\mathcal{A}(\mathcal{I}_{r_{\mathfrak m}}(H_{\mathfrak m}))$. If $A_{\mathfrak m}$ is a principal ideal of $H_{\mathfrak m}$, then $A$ is $r$-locally principal, and since $H$ is of finite $r$-character, $A$ is $r$-invertible, a contradiction. Therefore, $A_{\mathfrak m}$ is not a principal ideal of $H_{\mathfrak m}$ and $A_{\mathfrak m}\not\subset (\mathfrak m_{\mathfrak m}^2)_{r_{\mathfrak m}}$. Since $A$ and $(\mathfrak m^2)_r$ are $\mathfrak m$-primary this implies that $A\not\subset (\mathfrak m^2)_r$.

Now let $\mathcal{I}^*_r(H)$ be half-factorial and let for every $A\in\mathcal{A}(\mathcal{I}_r(H))\setminus\mathcal{I}^*_r(H)$, $A\not\subset ((\sqrt{A})^2)_r$. Let $\mathfrak m\in r$-$\max(H)$. It follows from Proposition~\ref{4.3}.2 that $H_{\mathfrak m}$ is half-factorial. Consequently, $H_{\mathfrak m}$ is finitely primary of rank one. Let $B\in\mathcal{A}(\mathcal{I}_{r_{\mathfrak m}}(H_{\mathfrak m}))$ be not principal. Then $B=A_{\mathfrak m}$ for some $A\in\mathcal{A}(\mathcal{I}_r(H))$ with $A\subset\mathfrak m$ by Proposition~\ref{4.3}.4. It follows from Proposition~\ref{4.3}.3 that $\sqrt{A}=\mathfrak m$. Obviously, $A$ is not $r$-invertible. Therefore, $A\not\subset (\mathfrak m^2)_r$. Since $A$ and $(\mathfrak m^2)_r$ are $\mathfrak m$-primary we have that $B\not\subset (\mathfrak m_{\mathfrak m}^2)_{r_{\mathfrak m}}$. We infer by Proposition~\ref{4.2}.2 that $\mathcal{I}_{r_{\mathfrak m}}(H_{\mathfrak m})$ is half-factorial.
\end{proof}

\begin{corollary}\label{4.5}
Let $H$ be a cancellative monoid and let $r$ be a finitary ideal system on $H$ such that $H$ is of finite $r$-character and $H_{\mathfrak m}$ is finitely primary and $\mathfrak m^2$ is contained in some proper $r$-invertible $r$-ideal of $H$ for every $\mathfrak m\in r$-$\max(H)$. Then $\mathcal{I}_r(H)$ is half-factorial if and only if $\mathcal{I}^*_r(H)$ is half-factorial.
\end{corollary}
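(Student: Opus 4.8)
The plan is to deduce Corollary~\ref{4.5} from Theorem~\ref{4.4}. One direction is free: if $\mathcal{I}_r(H)$ is half-factorial, then Theorem~\ref{4.4} already asserts that $\mathcal{I}^*_r(H)$ is half-factorial. So the work is entirely in the converse, and the strategy is to show that the additional clause in Theorem~\ref{4.4} — that $A\not\subset((\sqrt A)^2)_r$ for every $A\in\mathcal{A}(\mathcal{I}_r(H))\setminus\mathcal{I}^*_r(H)$ — is automatic under the new hypothesis that $\mathfrak m^2$ lies in a proper $r$-invertible $r$-ideal for every maximal $r$-ideal $\mathfrak m$.

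Concretely, I would assume $\mathcal{I}^*_r(H)$ is half-factorial, fix an atom $A$ of $\mathcal{I}_r(H)$ that is not $r$-invertible, and set $\mathfrak m:=\sqrt A$. By Proposition~\ref{4.3}.3, $\mathfrak m$ is a maximal $r$-ideal, so the standing hypothesis supplies a proper $r$-invertible $r$-ideal $C$ of $H$ with $\mathfrak m^2\subset C$, hence $(\mathfrak m^2)_r\subset C$. Then I would argue by contradiction: if $A\subset(\mathfrak m^2)_r$, then $A\subset C$, and since $C$ is $r$-invertible the set $(C^{-1}A)_r$ (with $C^{-1}=(H\DP C)$) is a non-empty $r$-ideal of $H$, i.e. an element of $\mathcal{I}_r(H)$, and $C\cdot_r(C^{-1}A)_r=(CC^{-1}A)_r=(HA)_r=A$. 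Because $A$ is an atom and $C$ is a proper $r$-ideal — hence a non-unit of the reduced semigroup $\mathcal{I}_r(H)$ — this forces $(C^{-1}A)_r=H$ and therefore $A=C\cdot_r H=C$, contradicting that $A$ is not $r$-invertible while $C$ is. Hence $A\not\subset((\sqrt A)^2)_r$, and Theorem~\ref{4.4} then gives that $\mathcal{I}_r(H)$ is half-factorial.

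I do not anticipate a real obstacle: once Theorem~\ref{4.4} is in hand, this is routine bookkeeping with $r$-ideals (the computation $C\cdot_r(C^{-1}A)_r=A$ uses only $(CC^{-1})_r=H$ and the standard identity $(XY_r)_r=(XY)_r$). The two points that need a moment's attention are that $(C^{-1}A)_r$ is genuinely a non-empty $r$-ideal of $H$ — immediate from $A\subset C$ and $C^{-1}=(H\DP C)$ — and that $\sqrt A$ really is \emph{maximal} and not merely a minimal prime $s$-ideal, which is exactly what makes the new hypothesis on $\mathfrak m^2$ applicable and is provided by Proposition~\ref{4.3}.3.
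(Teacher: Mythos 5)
Your proposal is correct and follows essentially the same route as the paper: reduce to Theorem~\ref{4.4} and verify, by contradiction, that an atom $A\notin\mathcal{I}^*_r(H)$ cannot satisfy $A\subset((\sqrt{A})^2)_r$, since the hypothesis would then place $A$ inside a proper $r$-invertible $r$-ideal $C$, forcing $A=C$ because $A$ is an atom. The only cosmetic difference is that the paper takes any $\mathfrak m\in r$-$\max(H)$ containing $A$ and uses $\sqrt{A}\subset\mathfrak m$ rather than invoking Proposition~\ref{4.3}.3 to identify $\sqrt{A}$ itself as maximal, and it states the conclusion $A=C$ without spelling out the cofactor argument you supply.
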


\begin{proof}
By Theorem~\ref{4.4} it is sufficient to show that for every $A\in\mathcal{A}(\mathcal{I}_r(H))\setminus\mathcal{I}^*_r(H)$, we have that $A\not\subset ((\sqrt{A})^2)_r$. Let $A\in\mathcal{A}(\mathcal{I}_r(H))\setminus\mathcal{I}^*_r(H)$. Assume that $A\subset ((\sqrt{A})^2)_r$. There is some $\mathfrak m\in r$-$\max(H)$ such that $A\subset\mathfrak m$. We infer that $\mathfrak m^2\subset I$ for some proper $I\in\mathcal{I}^*_r(H)$. Since $\sqrt{A}\subset\mathfrak m$, it follows that $A\subset ((\sqrt{A})^2)_r\subset (\mathfrak m^2)_r\subset I$, and thus $A=I\in\mathcal{I}^*_r(H)$, a contradiction.
\end{proof}

\begin{lemma}\label{4.6}
Let $L$ be a finite field, let $K\subset L$ be a subfield, let $X$ be an indeterminate over $L$ and let $R=K+XL\LK X\RK$. Then $R$ is a local Cohen-Kaplansky domain with maximal ideal $XL\LK X\RK$ and $R$ is divisorial if and only if $[L\DP K]\leq 2$.
\end{lemma}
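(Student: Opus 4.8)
The plan is to realize $R$ as an order in the discrete valuation domain $\overline{R} = L\LK X\RK$, to read off the Cohen--Kaplansky property from condition~(d) of the characterization recalled just before Lemma~\ref{4.1}, and then to settle divisoriality by computing $\mathfrak{m}^{-1}$ explicitly and applying Lemma~\ref{3.4}.1.

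First I would pin down the local structure. Put $\mathfrak{m} = XL\LK X\RK$. Every element of $R$ has its constant term in $K$, so it is a unit of $R$ exactly when that constant term is non-zero; hence $R$ is local with maximal ideal $\mathfrak{m}$ and $R/\mathfrak{m} = K$, and $R$ is not a field since $X \in \mathfrak{m}\setminus\{0\}$. Picking a $K$-basis $1 = \omega_1, \dots, \omega_n$ of $L$ (so $n = [L\DP K] < \infty$) and collecting coefficients shows $L\LK X\RK = \sum_i \omega_i K\LK X\RK \subseteq \sum_i \omega_i R$, so $L\LK X\RK$ is a finitely generated $R$-module; since $L\LK X\RK$ is a DVR (hence integrally closed) with $\mathsf{q}(L\LK X\RK) = L((X)) = \mathsf{q}(R)$, we conclude $\overline{R} = L\LK X\RK$. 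Thus $R$ is an order in the Dedekind domain $\overline{R}$, hence one-dimensional noetherian. Inspecting the ideals $0$, $\overline{R}$ and $X^k\overline{R}$ ($k\ge 1$) of $\overline{R}$, the conductor is $(R\DP\overline{R}) = \mathfrak{m}$ when $K\subsetneq L$, while $(R\DP\overline{R}) = \overline{R} = R$ when $K = L$.

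For the Cohen--Kaplansky assertion I would verify condition~(d): $\overline{R} = L\LK X\RK$ is a (semi)local principal ideal domain; $\overline{R}/(R\DP\overline{R})$ is either $L$ or $\{0\}$, hence finite --- this is exactly where finiteness of $L$ is used; and $|\max(R)| = 1 = |\max(\overline{R})|$. Therefore $R$ is a local Cohen--Kaplansky domain with maximal ideal $\mathfrak{m}$.

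It remains to characterize divisoriality. Since $R$ is one-dimensional and noetherian, Lemma~\ref{3.4}.1 says that $R$ is divisorial if and only if $\mathfrak{m}^{-1}/R$ is a simple $R$-module. If $K = L$, then $R = L\LK X\RK$ is a DVR, hence divisorial, and $[L\DP K] = 1 \le 2$. If $K \subsetneq L$, then from $\mathfrak{m} = X\overline{R}$ and $(R\DP\overline{R}) = \mathfrak{m}$ we get $\mathfrak{m}^{-1} = (R\DP X\overline{R}) = X^{-1}(R\DP\overline{R}) = \overline{R}$, so $\mathfrak{m}^{-1}/R = \overline{R}/R$. As $\mathfrak{m}\,\overline{R} = X\overline{R} \subseteq R$, this module is annihilated by $\mathfrak{m}$, hence is a vector space over $R/\mathfrak{m} = K$, and reduction modulo $X\overline{R}$ identifies it with $L/K$, of $K$-dimension $[L\DP K] - 1$. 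Hence $\mathfrak{m}^{-1}/R$ is simple if and only if $[L\DP K] = 2$, and combining the two cases shows that $R$ is divisorial if and only if $[L\DP K] \le 2$. The routine points ($\overline{R} = L\LK X\RK$, the list of ideals of $\overline{R}$, the conductor) are quick; the step needing care is the identity $\mathfrak{m}^{-1} = X^{-1}(R\DP\overline{R})$ together with the bookkeeping that exhibits $\overline{R}/R$ as a $([L\DP K]-1)$-dimensional $K$-space, since this is what yields the precise bound $[L\DP K] \le 2$.
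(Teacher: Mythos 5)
Your proof is correct, and it reaches the same two computations as the paper ($\mathfrak m^{-1}=L\LK X\RK$ and a count of $K$-dimensions) but via different criteria. For the Cohen--Kaplansky part the paper simply cites \cite[Corollary 7.2]{An-Mo92}, which handles $K+XL\LK X\RK$-type constructions directly, whereas you verify condition~(d) of the general characterization recalled before Lemma~\ref{4.1} ($\overline R=L\LK X\RK$ a semilocal PID, $\overline R/(R\DP\overline R)\cong L$ finite, $|\max(R)|=1=|\max(\overline R)|$); both are legitimate, and your route makes explicit where finiteness of $L$ enters. For divisoriality the paper computes $\mathfrak m^{-1}=(\mathfrak m\DP\mathfrak m)=L\LK X\RK$ and then invokes Matlis's criterion \cite[Theorem 3.8]{Ma68a} that a local one-dimensional noetherian domain is divisorial if and only if $\mathfrak m^{-1}$ is a $2$-generated $R$-module, reducing to whether $L$ is $2$-generated over $K$; you instead use Bazzoni's criterion as packaged in Lemma~\ref{3.4}.1 (divisorial iff $\mathfrak m^{-1}/R$ is simple) and identify $\mathfrak m^{-1}/R=\overline R/R\cong L/K$ as a $K$-space of dimension $[L\DP K]-1$. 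These two reductions are equivalent here, since $L\LK X\RK=\langle f,g\rangle_R$ forces $L=\langle f_0,g_0\rangle_K$ and conversely; your version has the small advantage of staying entirely within results already quoted in the paper, at the cost of the extra (correctly executed) conductor computation $(R\DP\overline R)=\mathfrak m$ needed to get $\mathfrak m^{-1}=X^{-1}(R\DP\overline R)=\overline R$.
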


\begin{proof}
It is an immediate consequence of \cite[Corollary 7.2]{An-Mo92} that $R$ is a local Cohen-Kaplansky domain with maximal ideal $XL\LK X\RK$. Set $\mathfrak m=XL\LK X\RK$. Without restriction let $K\not=L$. Then $\mathfrak m^{-1}=(\mathfrak m\DP\mathfrak m)=L\LK X\RK$. Since $R$ is a local one-dimensional noetherian domain we have by \cite[Theorem 3.8]{Ma68a} that $R$ is divisorial if and only if $L\LK X\RK$ is a $2$-generated $R$-module. For $h\in L\LK X\RK$ let $h_0$ denote the constant term of $h$.

If $L\LK X\RK$ is a $2$-generated $R$-module, then $L\LK X\RK=\langle f,g\rangle_R$, whence $L=\langle f_0,g_0\rangle_K$, and so $[L\DP K]=2$. Conversely, let $[L\DP K]=2$. Then $L=\langle 1,a\rangle_K$ for some $a\in L$. Observe that $L\LK X\RK=\langle 1,a\rangle_R$.
\end{proof}

\begin{example}\label{4.7}
Let $L$ be a finite field, let $K\subset L$ be a subfield, let $n\in\N_{\geq 2}$ and let $R=K+X^nL\LK X\RK$. Then $R$ is a local Cohen-Kaplansky domain with maximal ideal $X^nL\LK X\RK$, $R$ is not half-factorial and the square of the maximal ideal of $R$ is contained in a proper principal ideal of $R$.
\end{example}

\begin{proof}
By \cite[Corollary 7.2]{An-Mo92} we have that $R$ is a local Cohen-Kaplansky domain with maximal ideal $X^nL\LK X\RK$ such that $R$ is not half-factorial. Set $\mathfrak m=X^nL\LK X\RK$. Then $\mathfrak m^2=X^{2n}L\LK X\RK\subset X^nR$ and $X^nR$ is a proper principal ideal of $R$.
\end{proof}

\section{Arithmetic of stable orders in Dedekind domains}\label{5}
\smallskip

In this section we derive the main arithmetical results of the paper. For monoids of ideals of stable Mori domains, we study the catenary degree, the monotone catenary degree and we establish characterizations when these monoids are half-factorial and when they are transfer Krull. We demonstrate in remarks and examples that none of the main statements in Theorems~\ref{5.9} and~\ref{5.10} hold true without the stability assumption.

We need the concepts of catenary degrees, transfer homomorphisms, and transfer Krull monoids.
Let $H$ be an atomic monoid. The free abelian monoid $\mathsf Z(H)=\mathcal F(\mathcal A(H_{\red}))$ denotes the factorization monoid of $H$ and $\pi\colon\mathsf Z(H)\to H_{\red}$ the canonical epimorphism. For every element $a\in H$,
$\mathsf Z(a)=\pi^{-1} (aH^{\times})$ is the {\it set of factorizations} of $a$. Note that
$\mathsf L(a)=\{|z|\mid z\in\mathsf Z(a)\}\subset\N_0$ is the set of lengths of $a$.
Suppose that $H$ is atomic. If $z,z'\in\mathsf Z(H)$ are two factorizations, say
\[
z=u_1\cdot\ldots\cdot u_{\ell}v_1\cdot\ldots\cdot v_m\quad\text{and}\quad z'=u_1\cdot\ldots\cdot u_{\ell}w_1\cdot\ldots\cdot w_n\,,
\]
where $\ell,m,n\in\N_0$ and all $u_i,v_j,w_k\in\mathcal A(H_{\red})$ such that $v_j\ne w_k$ for all $j\in [1,m]$ and all $k\in [1,n]$, then $\mathsf d(z,z')=\max\{m,n\}$ is the distance between $z$ and $z'$.

Let $a\in H$ and
$N\in\mathbb N_0\cup\{\infty\}$. A finite sequence $z_0,\ldots,z_k\in\mathsf Z(a)$ is called a {\it $($monotone$)$ $N$-chain of factorizations of $a$} if $\mathsf d(z_{i-1},z_i)\le N$ for all $i\in [1,k]$ (and $|z_0|\le\ldots\le |z_k|$ or $|z_0|\ge\ldots\ge |z_k|$). We denote by $\mathsf c(a)$ (or by $\mathsf c_{\mon}(a)$ resp.) the smallest $N\in\N _0\cup\{\infty\}$ such that any two factorizations $z,\,z'\in\mathsf Z(a)$ can be concatenated by an $N$-chain (or by a monotone $N$-chain resp.).
Then
\[
\mathsf c(H)=\sup\{\mathsf c(b)\mid b\in H\}\in\N_0\cup
\{\infty\}\quad\text{and}\quad\mathsf c_{\mon}(H)=\sup\{
\mathsf c_{\mon}(b)\mid b\in H\}\in\N_0\cup\{\infty\}\quad
\,
\]
denote the\ {\it catenary degree}\ and the\ {\it monotone catenary degree} of $H$. By definition, we have $\mathsf c(H)\le\mathsf c_{\mon}(H)$, and $H$ is factorial if and only if $\mathsf c(H)=0$. If $H$ is cancellative but not factorial, then, by \cite[Theorem 1.6.3]{Ge-HK06a},
\begin{equation}\label{basic-inequality-1}
2+\sup\Delta(H)\le\mathsf c(H)\le\mathsf c_{\mon}(H)\,,
\end{equation}
whence $\mathsf c(H)\le 2$ implies that $H$ is half-factorial and that $2=\mathsf c(H)=\mathsf c_{\mon}(H)$. Let
\begin{equation}\label{finitely-primary}
H\subset F=F^{\times}\time\mathcal F(\{p_1,\ldots, p_s\})
\end{equation}
be a finitely primary monoid of rank $s\in\N$ and exponent $\alpha\in\N$. Then, by \cite[Theorem 3.1.5]{Ge-HK06a}, we have
\begin{equation}
\text{If $s=1$, then $\rho(H)\le 2\alpha-1$ and $\mathsf c(H)\le 3\alpha-1$.}\label{s=1}
\end{equation}
\begin{equation}
\text{If $s\ge 2$, then $\rho(H)=\infty$ and $\mathsf c(H)\le 2\alpha+1$.}\label{s=2}
\end{equation}

A monoid homomorphism $\theta\colon H\to B$ between monoids is said to be a {\it transfer homomorphism} if the following two properties are satisfied.
\begin{enumerate}
\item[{\bf (T\,1)\,}] $B=\theta(H) B^\times$\ and\ $\theta^{-1} (B^\times)=H^\times$.
\item[{\bf (T\,2)\,}] If $u\in H$,\ $b,\,c\in B$\ and\ $\theta(u)=bc$, then there exist\ $v,\,w\in H$\ such that\ $u=vw$,\
$\theta(v)\in b B^{\times}$\ and\ $\theta(w)\in c B^{\times}$.
\end{enumerate}
A monoid $H$ is said to be a {\it transfer Krull monoid} if it allows a transfer homomorphism $\theta$ to a Krull monoid $B$.
Since the identity map is a transfer homomorphism, Krull monoids are transfer Krull, but transfer Krull monoids need neither be commutative (though here we restrict to the commutative setting), nor Mori, nor completely integrally closed. The arithmetic of Krull monoids is best understood (compared with various other classes of monoids and domains), and a transfer homomorphism allows to pull back arithmetical properties of the Krull monoid $B$ to the original monoid $H$. We refer to the surveys \cite{Ge16c, Ge-Zh20a} for examples and basic properties of transfer Krull monoids.

All Dedekind domains are transfer Krull and stable. However, there are orders in Dedekind domains that are transfer Krull but not stable (Remark~\ref{5.15}) and there are orders that are stable but not transfer Krull (all orders in quadratic number fields are stable but not all of them are transfer Krull). Half-factorial monoids are trivial examples of transfer Krull monoids (if $H$ is half-factorial, then $\theta\colon H\to (\N_0,+)$, defined by $\theta(u)=1$ for all $u\in\mathcal A(H)$ and $\theta(\varepsilon)=0$ for all $\varepsilon\in H^{\times}$, is a transfer homomorphism). Thus a result (as given in Theorems~\ref{5.1} and~\ref{5.9}), stating that monoids of a given type are transfer Krull if and only if they are half-factorial, means that their arithmetic is different from the arithmetic of Krull monoids and equal only in the trivial case. For recent work on the half-factoriality of transfer Krull monoids we refer to \cite{G-L-T-Z21}.

\smallskip
We start with a result on the finiteness of the catenary degree of weakly Krull Mori domains.

\begin{theorem}\label{5.1}
Let $R$ be a weakly Krull Mori domain.
\begin{enumerate}
\item For every $\mathfrak p\in\mathfrak X(R)$, $\mathsf c(R_{\mathfrak p})<\infty$, and $\rho(R_{\mathfrak p})<\infty$ if and only if $(R_{\mathfrak p}\DP\widehat{R_{\mathfrak p}})\ne\{0\}$ and $\widehat{R_{\mathfrak p}}$ is local.
\item $\mathsf c(\mathcal{I}_v(R))=\sup\{\mathsf c(\mathcal{I}_{v_{\mathfrak p}}(R_{\mathfrak p}))\mid {\mathfrak p}\in\mathfrak X(R)\}$ and $\mathsf c(\mathcal{I}^*_v(R))=\sup\{\mathsf c(\mathcal{I}^*_{v_{\mathfrak p}}(R_{\mathfrak p}))\mid {\mathfrak p}\in\mathfrak X(R)\}$.
\item If $(R\DP\widehat R)\ne\{0\}$, then $\mathsf c(\mathcal{I}^*_v(R))\le\mathsf c(\mathcal{I}_v(R))<\infty$.
\item $\mathcal I_v^*(R)$ is a Mori monoid and it is half-factorial if and only if it is transfer Krull.
\end{enumerate}
\end{theorem}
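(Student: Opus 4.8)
The plan is to reduce all four statements to the one-dimensional local components of $R$ and then to feed in the arithmetic of (strongly, resp. finitely) primary Mori monoids.

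\emph{Reduction.} Since $R$ is a Mori domain, the $v$-operation on $R^{\bullet}$ coincides with the finitary $t$-operation, and since $R$ is weakly Krull we have $t\textnormal{-}\max(R^{\bullet})=\mathfrak X(R^{\bullet})=\mathfrak X(R)$, which by the remark following Lemma~\ref{4.1} forces finite $t$-character. Hence Proposition~\ref{4.3}.1 applies with $H=R^{\bullet}$ and $r=v$, giving $\mathcal I_v(R)\cong\coprod_{\mathfrak p\in\mathfrak X(R)}\mathcal I_{v_{\mathfrak p}}(R_{\mathfrak p})$ and, since each $R_{\mathfrak p}$ is local (so that, as in the proof of Proposition~\ref{4.3}.2, every $v_{\mathfrak p}$-invertible $v_{\mathfrak p}$-ideal is principal), $\mathcal I_v^*(R)\cong\coprod_{\mathfrak p}\mathcal I_{v_{\mathfrak p}}^*(R_{\mathfrak p})\cong\coprod_{\mathfrak p}(R_{\mathfrak p}^{\bullet})_{\red}$. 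I would then combine this with the routine fact (proved by a component-by-component chain argument) that the catenary degree of a coproduct of monoids is the supremum of the catenary degrees of the summands; this yields Statement~2 at once, and in Statement~3 the inequality $\mathsf c(\mathcal I_v^*(R))\le\mathsf c(\mathcal I_v(R))$ follows because $\mathcal I_v^*(R)$ is a divisor-closed submonoid of $\mathcal I_v(R)$, along which the catenary degree is monotone.

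\emph{Statement~1.} Fix $\mathfrak p\in\mathfrak X(R)$; then $R_{\mathfrak p}$ is a one-dimensional local Mori domain, so $R_{\mathfrak p}^{\bullet}$ is a primary (hence strongly primary) Mori monoid, and $\mathsf c(R_{\mathfrak p})<\infty$ by the known finiteness of the catenary degree of strongly primary Mori monoids. For the elasticity I would distinguish two cases. If $(R_{\mathfrak p}\DP\widehat{R_{\mathfrak p}})\neq\{0\}$, then $\widehat{R_{\mathfrak p}}$ is a semilocal principal ideal domain and $R_{\mathfrak p}^{\bullet}$ is finitely primary of rank $s=|\max(\widehat{R_{\mathfrak p}})|$ and some exponent $\alpha$; by \eqref{s=1} and \eqref{s=2} we get $\rho(R_{\mathfrak p})\le 2\alpha-1<\infty$ when $s=1$ and $\rho(R_{\mathfrak p})=\infty$ when $s\ge 2$, and $s=1$ is exactly the condition that $\widehat{R_{\mathfrak p}}$ be local. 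If $(R_{\mathfrak p}\DP\widehat{R_{\mathfrak p}})=\{0\}$, then one argues (or cites) that $\rho(R_{\mathfrak p})=\infty$ for a one-dimensional local Mori domain with vanishing conductor to its complete integral closure. Assembling the two cases gives the stated equivalence.

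\emph{Statement~3.} Assume $(R\DP\widehat R)\neq\{0\}$. By Statement~2, $\mathsf c(\mathcal I_v(R))=\sup_{\mathfrak p}\mathsf c(\mathcal I_{v_{\mathfrak p}}(R_{\mathfrak p}))$. Only finitely many $\mathfrak p\in\mathfrak X(R)$ contain the non-zero conductor, and for the remaining $\mathfrak p$ the ring $R_{\mathfrak p}$ is a DVR, so $\mathcal I_{v_{\mathfrak p}}(R_{\mathfrak p})\cong(\N_0,+)$ has catenary degree $0$. For each of the finitely many $\mathfrak p$ with $\mathfrak p\supseteq(R\DP\widehat R)$, the monoid $R_{\mathfrak p}^{\bullet}$ is finitely primary. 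The remaining point -- which I expect to be the real technical obstacle -- is the finiteness of $\mathsf c(\mathcal I_{v_{\mathfrak p}}(R_{\mathfrak p}))$ for a finitely primary Mori domain $R_{\mathfrak p}$; the natural route is via the multiplicative homomorphism $I\mapsto I\widehat{R_{\mathfrak p}}$ from $\mathcal I_{v_{\mathfrak p}}(R_{\mathfrak p})$ into the free abelian monoid $\mathcal I_v(\widehat{R_{\mathfrak p}})$ on the finitely many maximal ideals of the semilocal principal ideal domain $\widehat{R_{\mathfrak p}}$, using the conductor $(p_1\cdot\ldots\cdot p_s)^{\alpha}\widehat{R_{\mathfrak p}}\subset R_{\mathfrak p}$ to bound the atoms and their $\widehat{R_{\mathfrak p}}$-valuations, in analogy with the proof of the bounds \eqref{s=1}/\eqref{s=2} for $R_{\mathfrak p}^{\bullet}$ itself. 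Taking the maximum over the finitely many local factors then gives $\mathsf c(\mathcal I_v(R))<\infty$.

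\emph{Statement~4.} That $\mathcal I_v^*(R)$ is a Mori monoid is the standard fact, recalled in the introduction, that the monoid of $v$-invertible $v$-ideals of a Mori domain is Mori. Half-factorial monoids are transfer Krull via the standard transfer homomorphism to $(\N_0,+)$, so only one implication needs work: if $\mathcal I_v^*(R)\cong\coprod_{\mathfrak p}(R_{\mathfrak p}^{\bullet})_{\red}$ is transfer Krull, then it is half-factorial. I would use two facts. First, a divisor-closed submonoid of a transfer Krull monoid is again transfer Krull: given a transfer homomorphism $\theta$ onto a Krull monoid $B$, restrict it to the divisor-closed submonoid of $B$ generated by the image of the given submonoid, which is again Krull, and check {\bf (T\,1)}, {\bf (T\,2)}. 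Second, a primary transfer Krull monoid is half-factorial: if $\theta\colon H\to B$ is a transfer homomorphism with $B$ Krull and $H$ primary, then $B$ is primary, hence a discrete valuation monoid, hence factorial, and so $H$ is half-factorial. Since each $(R_{\mathfrak p}^{\bullet})_{\red}$ is a divisor-closed submonoid of the coproduct and is primary (being the reduced monoid of the one-dimensional local domain $R_{\mathfrak p}$), both facts apply and show that each $(R_{\mathfrak p}^{\bullet})_{\red}$ is half-factorial; hence $\mathcal I_v^*(R)$ is half-factorial. The only place where I anticipate serious work is the finiteness of $\mathsf c(\mathcal I_{v_{\mathfrak p}}(R_{\mathfrak p}))$ for finitely primary $R_{\mathfrak p}$ in Statement~3; the rest is bookkeeping with the coproduct decomposition together with standard properties of primary and transfer Krull monoids.
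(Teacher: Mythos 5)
Your overall route --- decompose $\mathcal I_v(R)$ and $\mathcal I_v^*(R)$ as coproducts over $\mathfrak X(R)$ via Proposition~\ref{4.3}, use that the catenary degree of a coproduct is the supremum of the catenary degrees of the components, and analyze the local components as finitely primary monoids --- is exactly the paper's. Your treatment of the elasticity in Statement~1 (case distinction on the conductor, then \eqref{s=1} and \eqref{s=2}) matches the paper, and your self-contained argument for Statement~4 (a divisor-closed submonoid of a transfer Krull monoid is transfer Krull, and a primary transfer Krull monoid is half-factorial because a primary Krull monoid is factorial with a single prime) is sound; the paper simply cites \cite[Proposition 7.3]{Ge-Zh20a} for this.

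There is, however, one genuine gap. For the claim $\mathsf c(R_{\mathfrak p})<\infty$ in Statement~1 you invoke ``the known finiteness of the catenary degree of strongly primary Mori monoids''. No such theorem exists: the paper points out immediately after Theorem~\ref{5.1} that there are primary Mori monoids $H$ with $\mathsf c(H)=\infty$ (citing \cite[Proposition 3.7]{Ge-Ha-Le07}), so this finiteness cannot follow from the monoid-theoretic hypotheses alone. It is a phenomenon of the \emph{domain} case: the paper's argument is that a one-dimensional local Mori \emph{domain} is strongly primary and locally tame by \cite[Theorem 3.9]{Ge-Ro20a}, and that locally tame strongly primary monoids have finite catenary degree by \cite[Theorem 4.1]{Ge-Go-Tr20}. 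You must route this step through local tameness (or an equivalent domain-specific input); as written it rests on a false general principle. Separately, in Statement~3 you correctly isolate the finiteness of $\mathsf c(\mathcal I_{v_{\mathfrak p}}(R_{\mathfrak p}))$ for the finitely many $\mathfrak p$ containing the conductor as the remaining technical point, but you only sketch the intended argument via $I\mapsto I\widehat{R_{\mathfrak p}}$, so this step is not actually carried out in your write-up (the paper itself disposes of Statement~3 very briefly from Statement~2, the divisor-closed-submonoid inequality, and the observation that almost all localizations are discrete valuation domains).
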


\begin{proof}
Since $R$ is a weakly Krull Mori domain, we have $t$-$\spec(R)=\mathfrak X(R)$ by \cite[Theorem 24.5]{HK98}. Thus all assumptions of Proposition~\ref{4.3} are satisfied.

1. Let $\mathfrak p\in\mathfrak X(R)$. Since $R_{\mathfrak p}$ is a one-dimensional local Mori domain, it is strongly primary and hence locally tame by \cite[Theorem 3.9]{Ge-Ro20a}. Thus its catenary degree is finite by \cite[Theorem 4.1]{Ge-Go-Tr20}. If $(R_{\mathfrak p}\DP\widehat{R_{\mathfrak p}})=\{0\}$, then $\rho(R_{\mathfrak p})=\infty$ by \cite[Theorem 3.7]{Ge-Ro20a}. Suppose that $(R_{\mathfrak p}\DP\widehat{R_{\mathfrak p}})\ne\{0\}$. Then $R^{\bullet}$ is finitely primary by \cite[Proposition 2.10.7]{Ge-HK06a}, whence the claim on the elasticity follows from \eqref{s=1} and \eqref{s=2}.

2. Since the catenary degree of a coproduct equals the supremum of the individual catenary degrees (\cite[Proposition 1.6.8]{Ge-HK06a}), the assertion follows from Proposition~\ref{4.3}.1.

3. Since $\mathcal{I}^*_v(R)$ is a divisor-closed submonoid of $\mathcal{I}_v(R)$, the inequality between their catenary degrees holds. If $(R\DP\widehat R)\ne\{0\}$,
then almost all $R_{\mathfrak p}$ are discrete valuation domains whence their catenary degree is finite. Thus the claim follows from 2. and from Proposition~\ref{4.3}.1.

4. See \cite[Proposition 7.3]{Ge-Zh20a}.
\end{proof}

There are primary Mori monoids $H$ with $\mathsf c(H)=\infty$ (\cite[Proposition 3.7]{Ge-Ha-Le07}), in contrast to the domain case as given in Theorem~\ref{5.1}.1.

Let $H$ be a finitely primary monoid of rank $s\in\mathbb N$ such that there exist some exponent $\alpha\in\mathbb N$ of $H$ and some system $\{p_i\mid i\in [1,s]\}$ of representatives of the prime elements of $\widehat H$ with the following property: for all $i\in [1,s]$ and for all $a\in\widehat H$ with $\mathsf v_{p_i}(a)\ge\alpha$ we have $p_ia\in H$ if and only if $a\in H$. Then $H$ is said to be
\begin{itemize}
\item {\it strongly ring-like} if ${\widehat H}^{\times}/H^{\times}$ is finite and $\{(\mathsf v_{p_i}(a))_{i=1}^s\mid a\in H\setminus H^{\times}\}\subset\mathbb N^s$ has a smallest element with respect to the partial order.
\end{itemize}
The concept of strongly ring-like monoids was introduced by Hassler (\cite{Ha09c}), and the question which one-dimensional local domains are strongly ring-like was studied in \cite[Section 5]{Ge-Re19d}.

A {\it numerical monoid} is a submonoid of $(\N_0,+)$ with finite complement, whence numerical monoids are finitely primary of rank one. Conversely, if $H\subset F=F^{\times}\times\mathcal F(\{p\})$ is finitely primary of rank one, then its {\it value monoid} $\mathsf v_p(H)=\{\mathsf v_p(a)\mid a\in H\}\subset\N_0$ is a numerical monoid.

\begin{proposition}\label{5.2}
Let $R$ be a local stable Mori domain with $(R\DP\widehat R)\ne\{0\}$. Then $R^{\bullet}$ is finitely primary of rank $s\le 2$ and it is strongly ring-like. If $s=2$, then $\rho(R^{\bullet})=\infty$ and if $s=1$ and $\mathfrak X(\widehat R)=\{\mathfrak p\}$, then the elasticity $\rho(R^{\bullet})$ is accepted with $\rho(R^{\bullet})=\max\mathsf v_{\mathfrak p}(R^{\bullet})/\min\mathsf v_{\mathfrak p}(R^{\bullet})$.
\end{proposition}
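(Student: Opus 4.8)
The plan is to reduce everything to the local structure of a one-dimensional stable Mori domain and then invoke the classification results already assembled in Sections~\ref{3} and~\ref{4}. Since $R$ is a local stable Mori domain that is not a field (otherwise the claim is vacuous), Corollary~\ref{3.2}.1 tells us $R$ is one-dimensional, primary, and strongly primary; thus $R^{\bullet}$ is a primary Mori monoid. By hypothesis $(R\DP\widehat R)\ne\{0\}$, so \cite[Proposition 2.10.7]{Ge-HK06a} (together with the standard description of strongly primary domains with non-zero conductor to the complete integral closure) shows that $R^{\bullet}$ is finitely primary of some rank $s\in\N$ and some exponent $\alpha\in\N$; write $R^{\bullet}\subset F=F^{\times}\time\mathcal F(\{p_1,\ldots,p_s\})$ as in \eqref{finitely-primary}. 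The rank $s$ equals the number of maximal ideals of $\overline R$ lying over the maximal ideal of $R$ (equivalently $|\mathfrak X(\widehat R)|$), and Corollary~\ref{3.2}.2(c) states that $\overline R$ is a Dedekind domain with at most two maximal ideals; since $R$ is local, all of these lie over the unique maximal ideal of $R$, whence $s=|\max(\overline R)|\le 2$. This gives the first assertion.

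Next I would prove that $R^{\bullet}$ is strongly ring-like. One must exhibit an exponent $\alpha$ and a system $\{p_i\}$ of representatives of the primes of $\widehat R$ such that, for all $i$ and all $a\in\widehat R$ with $\mathsf v_{p_i}(a)\ge\alpha$, one has $p_ia\in R^{\bullet}\iff a\in R^{\bullet}$; and one must check that ${\widehat R}^{\times}/R^{\times}$ is finite and that $\{(\mathsf v_{p_i}(a))_{i=1}^s\mid a\in R^{\bullet}\setminus R^{\times}\}$ has a smallest element. Finiteness of ${\widehat R}^{\times}/R^{\times}$ should follow from the fact that $(R\DP\widehat R)\ne\{0\}$ forces $\widehat R/(R\DP\widehat R)$-type finiteness statements via the module-finiteness built into ``order in a Dedekind domain'' (Theorem~\ref{3.7}); more precisely, $\widehat R=\overline R$ here since $\overline R$ is already completely integrally closed, and $\overline R$ is a finitely generated $R$-module, so $\overline R^{\times}/R^{\times}$ embeds into the unit group of the artinian ring $\overline R/(R\DP\overline R)$. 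The existence of a smallest element in the value-vector set, and the ``only if $a\in R^\bullet$'' stability of the valuations under multiplication by $p_i$, is where stability does the real work: stability of the maximal ideal $\mathfrak m$ (Corollary~\ref{3.2}.2(b)) combined with quadraticity of $R\subset\overline R$ tightly constrains $R$ between $\overline R$ and a ``symmetric'' subring, which is exactly the combinatorial content of strong ring-likeness. I expect this verification — translating the ideal-theoretic stability condition into the explicit valuation-monoid statement — to be the main obstacle, and I would handle the two cases $s=1$ and $s=2$ separately, reading off the structure of $R$ inside $\overline R$ from Corollary~\ref{3.2}.2.

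Finally, the statements about the elasticity are immediate once the finitely primary structure is in hand. If $s=2$ then $\rho(R^{\bullet})=\infty$ by \eqref{s=2}. If $s=1$ and $\mathfrak X(\widehat R)=\{\mathfrak p\}$, then $R^{\bullet}$ is finitely primary of rank one with value monoid the numerical monoid $\mathsf v_{\mathfrak p}(R^{\bullet})\subset\N_0$; for such a monoid an atom $u$ has $\mathsf v_{\mathfrak p}(u)$ an irreducible element of the value monoid up to the exponent bound, and a standard computation (using that $\widehat R$ is a discrete valuation domain, so lengths are controlled purely by $\mathsf v_{\mathfrak p}$) shows that the elasticity is realised by comparing a product of atoms of maximal value $\max\mathsf v_{\mathfrak p}(R^{\bullet})$ against one of minimal value $\min\mathsf v_{\mathfrak p}(R^{\bullet})$, and that this supremum is attained; hence $\rho(R^{\bullet})$ is accepted and equals $\max\mathsf v_{\mathfrak p}(R^{\bullet})/\min\mathsf v_{\mathfrak p}(R^{\bullet})$. (The accepted-elasticity part can also be quoted from the finitely-primary rank-one theory in \cite[Theorem 3.1.5]{Ge-HK06a} together with the strong ring-like structure just established.)
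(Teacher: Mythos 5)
Your opening paragraph (that $R^{\bullet}$ is finitely primary of rank $s=|\mathfrak X(\widehat R)|\le 2$, via Corollary~\ref{3.2} and \cite[Proposition 2.10.7]{Ge-HK06a}) matches the paper's argument. The real content of the proposition, however, is the claim that $R^{\bullet}$ is strongly ring-like, and that is precisely the step you defer (``I expect this verification \dots to be the main obstacle''); as written, the proposal does not prove it. Moreover, the one concrete argument you do offer for part of the definition --- that $\widehat R^{\times}/R^{\times}$ is finite because it embeds into the unit group of the artinian ring $\overline R/(R\DP\overline R)$ --- cannot work, since artinian rings with infinite residue fields have infinite unit groups. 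Concretely, $R=\R+X\mathbb{C}\LK X\RK$ satisfies every hypothesis of the proposition (it is local and stable by Corollary~\ref{3.2}.2, hence Mori by Proposition~\ref{3.1}, with conductor $X\mathbb{C}\LK X\RK\ne\{0\}$), yet $\widehat R^{\times}/R^{\times}\cong\mathbb{C}^{\times}/\R^{\times}$ is infinite; so no argument of the shape you propose can establish that clause, and whether the finiteness condition in the recorded definition is really available here must be settled by the precise formulation of the result being cited. The paper does not attempt a hands-on verification at all: it first deduces from $(R\DP\overline R)\ne\{0\}$ and Proposition~\ref{3.5}.4 that every ideal of $R$ is $2$-generated, hence that $R$ is noetherian with $\widehat R=\overline R$, and then quotes \cite[Corollary 5.7]{Ge-Re19d} with the input $|\max(\overline R)|\le 2\le |R/\mathfrak m|$. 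That citation carries the entire weight of the strongly-ring-like claim and cannot be replaced by your sketch.

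A second, smaller gap: in the case $s=1$ your ``standard computation'' asserts that lengths are controlled purely by $\mathsf v_{\mathfrak p}$ because $\widehat R$ is a discrete valuation domain. The paper itself warns, in the discussion preceding Proposition~\ref{5.7}, that for a rank-one finitely primary monoid the map $a\mapsto\mathsf v_p(a)$ need not be a transfer homomorphism, so factorization lengths are not determined by values and the acceptedness of the elasticity is genuinely non-trivial. The paper obtains both the value of $\rho(R^{\bullet})$ and its acceptedness from \cite[Lemma 4.1]{Ge-Zh18a}, whose hypothesis is exactly the finiteness of $\widehat R^{\times}/R^{\times}$ supplied by the strongly-ring-like property --- so this part of your argument also rests on the step you left open. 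The $s=2$ case and the reduction to \eqref{s=2} are fine.
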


\begin{proof}
By Corollary~\ref{3.2}.1, $R$ is one-dimensional. By \cite[Proposition 2.10.7]{Ge-HK06a}, one-dimensional local Mori domains with non-zero conductor are finitely primary of rank $|\mathfrak X(\widehat R)|$. By Corollary~\ref{3.2}.2, $\overline R$ is a Dedekind domain with at most two maximal ideals, whence $s=|\mathfrak X(\widehat R)|\le2$. Since $(R\DP\overline R)\ne\{0\}$, every ideal of $R$ is $2$-generated by Proposition~\ref{3.5}.4, whence $R$ is noetherian.
If $\mathfrak m$ is the maximal ideal of $R$, then $\widehat R=\overline R$ and $|\max(\overline R)|\le 2\le |R/\mathfrak m|$, whence $R^{\bullet}$ is strongly ring-like by \cite[Corollary 5.7]{Ge-Re19d}.
If $s=2$, then $\rho(R^{\bullet})=\infty$ by \eqref{s=2}. Suppose that $s=1$. Since $R^{\bullet}$ is strongly ring-like, ${\widehat R}^{\times}/R^{\times}$ is finite and thus the elasticity is accepted and has the asserted value by \cite[Lemma 4.1]{Ge-Zh18a}.
\end{proof}

Let $R$ be a one-dimensional local Mori domain with $(R\DP\widehat R)\ne\{0\}$. If $R$ is stable, then, by Proposition~\ref{5.2}, we have $|\mathfrak X(\widehat R)|\le 2$. Example~\ref{5.5} shows that the converse does not hold in general. Example~\ref{5.4} and Proposition~\ref{5.7}.1 show that also for stable domains the exponent of $R^{\bullet}$ can be arbitrarily large. We start with a lemma.

\begin{lemma}\label{5.3}
Let $R$ be a Mori domain and a G-domain and let $I$ be a divisorial stable ideal of $R$. Then $I^2=xI$ for some $x\in I$.
\end{lemma}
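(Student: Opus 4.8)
The plan is to reduce the claim to the statement that $I$ is a principal ideal of its ring of multipliers $T:=(I\DP I)$, and then to derive this from the G-domain hypothesis together with Mori-ness. Since $I$ is stable, $I$ is invertible as an ideal of $T$; in particular $I$ is a finitely generated $T$-module, $TI=I$, and $I\,(T\DP I)=T$. First I would record the reduction: if $I=xT$ for some $x$, then $x=x\cdot 1\in xT=I$ and $I^2=x^2T^2=x^2T=x(xT)=xI$, so the lemma holds; conversely, if $I^2=xI$ with $x\in I$, then multiplying by $(T\DP I)$ gives $I=TI=I^2(T\DP I)=x\,I(T\DP I)=xT$. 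Hence it suffices to show that $I$ is principal as a $T$-ideal. Note also that $aT\subseteq I\subseteq R$ for every $a\in I\setminus\{0\}$, since $tI\subseteq I$ for all $t\in T$; thus $T$ is a fractional overring of $R$, and in fact a divisorial fractional ideal of $R$, being an intersection of fractional divisorial ideals of the Mori domain $R$.

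Next I would bring in the assumption that $R$ is a G-domain: there is some $d\in R\setminus\{0\}$ lying in every non-zero prime ideal of $R$, equivalently $R[1/d]=\mathsf q(R)=:K$. As $R\subseteq T\subseteq K$, also $T[1/d]=K$, so every non-zero prime ideal of $T$ contains $d$; in particular $d$ lies in the Jacobson radical $\mathfrak J$ of $T$. (If $T=K$, then $I$ is a non-zero $K$-subspace of $R$, which forces $I=R=T$ and makes the lemma trivial; so assume $T\subsetneq K$, whence $\mathfrak J$ is the intersection of the non-zero maximal ideals of $T$.)

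The core of the argument is to show that the invertible $T$-module $I$ is free, i.e.\ principal. Since $I$ is finitely generated over $T$ and $d\in\mathfrak J$, Nakayama's lemma reduces this to showing that $I/dI$ is cyclic over $T/dT$; and since $I/dI$ is an invertible (hence rank-one projective) $T/dT$-module, and rank-one projective modules over semilocal rings are free, it is enough to prove that $T$ — equivalently $T/dT$, whose maximal ideals correspond to those of $T$, all of which contain $d$ — is semilocal. Here is where Mori-ness enters. Because $R$ satisfies the ascending chain condition on divisorial ideals, only finitely many $t$-maximal ideals of $R$ can contain the fixed element $d$: if $\mathfrak p_1,\mathfrak p_2,\dots$ were infinitely many distinct ones, then $d\in\bigcap_{i=1}^{n}\mathfrak p_i$ for all $n$, so the divisorial \emph{integral} ideals $d\,(R\DP\mathfrak p_1\cap\dots\cap\mathfrak p_n)$ form an ascending chain; stabilization, after applying $(R\DP-)$ once more and using that each $\mathfrak p_i$ is divisorial, yields $\mathfrak p_1\cdots\mathfrak p_N\subseteq\mathfrak p_1\cap\dots\cap\mathfrak p_N\subseteq\mathfrak p_{N+1}$, forcing some $\mathfrak p_i=\mathfrak p_{N+1}$, a contradiction. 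Combining this with the G-domain property — every maximal ideal of $R$, and correspondingly of the fractional overring $T$, contains $d$ — one concludes that $R$, and then $T$, has only finitely many maximal ideals. Thus $T$ is semilocal, $I/dI\cong T/dT$, and lifting a generator through Nakayama gives $I=xT$ with $x\in I$, whence $I^2=xI$.

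The main obstacle is the last step: extracting the semilocality of $T=(I\DP I)$ (or, less than that, the freeness of the invertible $T$-module $I$) from the twin hypotheses. This is precisely where both are genuinely needed in tandem: "G-domain" collapses all maximal ideals onto the single element $d$, while "Mori" limits how many divisorial primes of $R$ can contain $d$; the remaining technical care lies in passing from control of the divisorial prime spectrum of $R$ to control of $\max(T)$, and in handling maximal ideals of $R$ that may fail to be divisorial.
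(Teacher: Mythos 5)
Your overall strategy is the same as the paper's: reduce to showing that $I$ is principal over $T=(I\DP I)$, observe that $T$ is a Mori domain and a G-domain, deduce that $T$ is semilocal, and use that invertible ideals of semilocal domains are principal. The reduction, the observation that $T$ is a divisorial fractional overring (hence Mori) and a G-domain, and the Nakayama/projective-module argument at the end are all fine. The paper compresses these steps and gets semilocality of $T$ by citing the theorem that the spectrum of a Mori G-domain is finite \cite[Theorem 2.7.9]{Ge-HK06a}.

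The genuine gap is exactly at that semilocality step, and it is the one you flag yourself at the end without resolving. Your ascending-chain argument with the ideals $d\,\bigl(R\DP\mathfrak p_1\cap\dots\cap\mathfrak p_n\bigr)$ is the standard proof that a Mori domain has finite $t$-character, i.e., that only finitely many \emph{$t$-maximal} (equivalently, maximal divisorial) primes contain the fixed element $d$. But a maximal ideal $\mathfrak m$ of a Mori domain can satisfy $\mathfrak m^{-1}=R$, in which case it is not contained in any proper divisorial ideal at all, so it is invisible to the count of $t$-maximal ideals containing $d$; a priori there could be infinitely many such $\mathfrak m$, all containing $d$. Ruling this out — that is, showing every prime of a Mori G-domain is controlled by the divisorial spectrum, so that $\spec$ is actually finite — is precisely the content of the cited Theorem 2.7.9, and your sketch does not reprove it. (There is also a secondary soft spot: you pass from $\max(R)$ finite to $\max(T)$ finite via the correspondence over $d$, but since $T$ need not be a finitely generated $R$-module this is cleanest done by applying the finite-spectrum result to $T$ directly, as the paper does, rather than to $R$; $T$ is itself Mori and a G-domain.) So the write-up is correct in outline but incomplete: the crucial finiteness of $\max(T)$ still needs either the cited theorem or a genuine argument handling non-divisorial maximal ideals.
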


\begin{proof}
Since every overring of a G-domain is a G-domain, $(I\DP I)$ is a G-domain. Since $I$ is divisorial and $R$ is a Mori domain, we have that $(I\DP I)$ is a Mori domain. Therefore, $\spec((I\DP I))$ is finite by \cite[Theorem 2.7.9]{Ge-HK06a}, and hence $(I\DP I)$ is semilocal. Consequently, $I=x(I\DP I)$ for some $x\in I$, and thus $I^2=xI$.
\end{proof}

\begin{example}[{\bf Stable orders in number fields}]\label{5.4}
1. Let $K=\mathbb{Q}(\sqrt{d})$ be a quadratic number field, where $d\in\mathbb{Z}\setminus\{0,1\}$ is squarefree, and let
\[
\omega=\begin{cases}\sqrt{d},&\text{if $d\equiv 2,3\mod 4$}\\\frac{1+\sqrt{d}}{2},&\text{if $d\equiv 1\mod 4$}\end{cases}\,.
\]
Let $R=\mathbb{Z}+p^n\omega\mathbb{Z}$, where $p\in\N$ is a prime number and $n\in\N$. Since every ideal of $R$ is $2$-generated, $R$ is a stable order in the Dedekind domain $\overline{R}=Z+\omega\mathbb{Z}$. Then $\mathfrak m=p\mathbb{Z}+p^n\omega\mathbb{Z}\in\mathfrak X(R)$ and $R_{\mathfrak m}$ is a one-dimensional local stable domain with non-zero conductor. By Corollary~\ref{3.2}, $R_{\mathfrak m}^{\bullet}$ is Mori, whence it is finitely primary of rank $s=|\{\mathfrak q\in\mathfrak X(\overline{R})\mid\mathfrak q\cap R=\mathfrak m\}|\leq 2$. Moreover, if $\alpha\in\N$ is the exponent of $R_{\mathfrak m}^{\bullet}$, then $\alpha\ge\max\{\mathsf v_{\mathfrak q}((R\DP\overline{R}))\mid\mathfrak q\in\mathfrak X(\overline{R}),\mathfrak q\cap R=\mathfrak m\}$ and since $(R\DP\overline{R})=p^n\overline{R}$, we obtain that $\alpha\ge n\max\{\mathsf v_{\mathfrak q}(p\overline{R})\mid\mathfrak q\in\mathfrak X(\overline{R}),\mathfrak q\cap R=\mathfrak m\}\geq n$.

\smallskip
2. Let $K$ be an algebraic number field, $\mathcal O_K$ its ring of integers, and $R\subset\mathcal O_K$ an order. If the discriminant $\Delta(R)\in\Z$ is not divisible by the fourth power of a prime, then $R$ is stable by a result of Greither (\cite[Theorem 3.6]{Gr82a}). In particular, if $a\in\N$ is squarefree with $3\nmid a$ and $R=\Z [\sqrt[3]{a}]\subset\Q(\sqrt[3]{a})$, then $\Delta(R)=27a^2$ is not divisible by a fourth power of a prime (for more on $R$ and $\mathcal O_K$ in the case of pure cubic fields we refer to \cite[Theorem 3.1.9]{HK20a}).
\end{example}

Next we discuss the catenary degree of finitely primary monoids, which has received a lot of attention in the literature. Let $H\subset F$ be a finitely primary monoid of rank $s$ and exponent $\alpha$, with all notation as in \eqref{finitely-primary}. Then the catenary degree is bounded above by $3\alpha-1$ in case $s=1$ and by $2\alpha+1$ otherwise. These bounds can be attained, but the catenary degree can also be much smaller. Indeed, as shown in Example~\ref{5.4}.2, for every $n\in\N$ there is an order $R$ in a quadratic number field whose localization $R_{\mathfrak p}$ at a maximal ideal $\mathfrak p$ is finitely primary of exponent greater than or equal to the given $n$ but the catenary degree $\mathsf c(R_{\mathfrak p})$ is bounded by $5$ (\cite[Theorem 1.1]{Br-Ge-Re20}). Let $H\subset F=F^{\times}\times\mathcal F(\{p\})$ be finitely primary of rank one, suppose that its value monoid $\mathsf v_p(H)=\{\mathsf v_p(a)\mid a\in H\}=\langle d_1,\ldots, d_s\rangle$, with $s\in\N$, $1<d_1<\ldots<d_s$, and $\gcd(d_1,\ldots, d_s)=1$. The catenary degree of numerical monoids has been studied a lot in recent literature (see \cite{GS-ON-We19, Om12a, ON-Pe17a, ON-Pe18a, Ph15a}, for a sample). By \eqref{basic-inequality-1}, we have $2+\max\Delta(H)\le\mathsf c(H)$. There are also results for $\min\Delta(H)$. Indeed,
by \cite[Lemma 4.1]{Ge-Zh18a}, we have
\[
\gcd(d_i-d_{i-1}\mid i\in [2,s])\mid\min\Delta(H)\quad\text{and if $|F^{\times}/H^{\times}|=1$, then}\quad\gcd(d_i-d_{i-1}\mid i\in [2,s])=\min\Delta(H)\,.
\]

We continue with examples of numerical semigroup rings and numerical power series rings. Let $K$ be a field and $H\subset\N_0$ be a numerical monoid. Then
\[
K[H]=K[X^h\mid h\in H]\subset K[X]\quad\text{and}\quad K\LK H\RK=K\LK X^h\mid h\in H\RK\subset K\LK X\RK
\]
denote the {\it numerical semigroup ring} and the {\it numerical power series ring}. Since $H$ is finitely generated, $K[H]$ is a one-dimensional noetherian domain with integral closure $K[X]$. The power series ring $K\LK H\RK$ is a one-dimensional local noetherian domain with integral closure $K\LK X\RK$, and its value monoid $\mathsf v_X(K\LK H\RK^{\bullet})$ is equal to $H$.

\begin{example}\label{5.5}

Let $K$ be a field and $H\subset\N_0$ be a numerical monoid distinct from $\N_0$. Then $H$ is not half-factorial, whence \eqref{basic-inequality-1} implies that $\mathsf c(H)\ge 3$. If $\min(H\setminus\{0\})\ge 3$, then $X^2\notin X K\LK H\RK+K\LK H\RK$, whence $R\subset\overline R$ is not a quadratic extension and $K\LK H\RK$ is not stable by Corollary~\ref{3.2}.2.

1. Let $H=\langle e, e+1,\ldots, 2e-1\rangle=\N_{\ge e}\cup\{0\}$ with $e\in\N_{\ge 2}$ and $R=K\LK H\RK$. By \cite[Special case 3.1, page 216]{Ge-HK06a}, we have $\mathsf c(H)=\mathsf c(R)=3$. Indeed, by \cite[Theorem 5.6]{Ph15a}, there is a transfer homomorphism $\theta\colon R^{\bullet}\to H$.

2. Let $K$ be finite, $H=\langle e, e+1,\ldots, 2e-1\rangle=\N_{\ge e}\cup\{0\}$ with $e\in\N_{\ge 2}$, and $R=K [H]$. We set $\rho=X+X^e\widehat R\in\widehat R/X^e\widehat R$ and $G=K [\rho]^{\times}/ K^{\times}$. Then, by \cite[Special Case 3.2, page 216]{Ge-HK06a}, we have $|G|=|K|^{e-1}$ and $\mathsf c(R)\ge\mathsf c(\mathcal B(G))$, where $\mathcal B(G)$ is the monoid of zero-sum sequences over $G$. Since $\mathsf c(\mathcal B(G))\ge\max\{\exp(G),1+\mathsf r(G)\}$ by \cite[Theorem 6.4.2]{Ge-HK06a}, the catenary degree of $R$ grows with $|G|$.
\end{example}

\begin{lemma}\label{5.6}
Let $R$ be an order in a Dedekind domain such that $R$ is a maximal proper subring of $\overline{R}$. Then we have
\begin{enumerate}
\item Every maximal ideal of $R$ is stable.
\item $R$ is stable if and only if $\overline{R}/R$ is a simple $R$-module.
\end{enumerate}
\end{lemma}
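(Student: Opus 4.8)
The plan is to use repeatedly that, by hypothesis, there is no ring strictly between $R$ and $\overline R$. The key elementary observation is: for every nonzero fractional ideal $J$ of $R$ with $J\subseteq\overline R$, the ring $(J\DP J)$ is integral over $R$ by the determinant trick ($J$ is a finitely generated faithful $R$-module, since $R$ is noetherian and $\overline R$ is a finitely generated $R$-module), so $R\subseteq(J\DP J)\subseteq\overline R$, and hence $(J\DP J)\in\{R,\overline R\}$. I record at the outset that $R$ is one-dimensional noetherian — hence Mori and, by \cite[Lemma 3.11]{Ga-Ro19a}, of finite character — that $(R\DP\overline R)\ne\{0\}$, and that $R\subsetneq\overline R$ with $R$ not a field.

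For part 1, let $\mathfrak m\in\max(R)$. If $(\mathfrak m\DP\mathfrak m)=\overline R$, then $\mathfrak m$ is a nonzero ideal of the Dedekind domain $\overline R=(\mathfrak m\DP\mathfrak m)$, hence invertible there, so $\mathfrak m$ is stable. If $(\mathfrak m\DP\mathfrak m)=R$, I would invoke the standard fact that $\mathfrak m^{-1}=(R\DP\mathfrak m)\supsetneq R$ for a maximal ideal of a one-dimensional noetherian domain (checked after localizing at $\mathfrak m$): since $\mathfrak m\subseteq\mathfrak m\mathfrak m^{-1}\subseteq R$ and $\mathfrak m$ is maximal, either $\mathfrak m\mathfrak m^{-1}=R$ — so $\mathfrak m$ is invertible, hence stable — or $\mathfrak m\mathfrak m^{-1}=\mathfrak m$, which forces $\mathfrak m^{-1}\subseteq(\mathfrak m\DP\mathfrak m)=R$, contradicting $\mathfrak m^{-1}\supsetneq R$. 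So $\mathfrak m$ is stable in both cases.

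For part 2, necessity, I assume $R$ is stable and argue by contradiction. The module $\overline R/R$ is nonzero; if it were not simple there would be an $R$-submodule $M$ with $R\subsetneq M\subsetneq\overline R$, which is a finitely generated fractional ideal. If $(M\DP M)=\overline R$, then $M$ is an $\overline R$-submodule of $\overline R$ containing $1$, so $M=\overline R$, a contradiction. If $(M\DP M)=R$, then $M$, being a nonzero fractional ideal of the stable domain $R$, is invertible as an ideal of $(M\DP M)=R$; but an invertible fractional ideal of $R$ contained in $\overline R$ equals $R$, since after localizing $M_{\mathfrak m}=\alpha R_{\mathfrak m}$ with $\alpha\in\overline R_{\mathfrak m}$ integral over $R_{\mathfrak m}$ and, as $1\in M_{\mathfrak m}$, with $\alpha^{-1}\in R_{\mathfrak m}$, which forces $\alpha\in R_{\mathfrak m}$ and thus $M=R$ — again a contradiction. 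Hence $\overline R/R$ is simple. For sufficiency, assume $\overline R/R$ is simple, so $\mathfrak f:=(R\DP\overline R)=\operatorname{Ann}_R(\overline R/R)$ is a maximal ideal of $R$. For $\mathfrak m\in\max(R)\setminus\{\mathfrak f\}$ we get $\mathfrak f R_{\mathfrak m}=R_{\mathfrak m}$, hence $\overline R\subseteq R_{\mathfrak m}$ and $R_{\mathfrak m}=\overline{R_{\mathfrak m}}$ is a discrete valuation domain, in particular stable. For $\mathfrak m=\mathfrak f$, localizing the exact sequence $0\to R\to\overline R\to\overline R/R\to 0$ shows $\overline{R_{\mathfrak f}}/R_{\mathfrak f}$ is simple; consequently $R_{\mathfrak f}\subseteq\overline{R_{\mathfrak f}}$ is quadratic (the only intermediate $R_{\mathfrak f}$-modules are those two rings, both of which are rings), $\overline{R_{\mathfrak f}}$ is a semilocal principal ideal domain, and $|\max(\overline{R_{\mathfrak f}})|\le 2$ because $\overline R/\mathfrak f$ is artinian of length at most $\boldsymbol{l}_R(\overline R/\mathfrak f)=\boldsymbol{l}_R(R/\mathfrak f)+\boldsymbol{l}_R(\overline R/R)=2$; so $R_{\mathfrak f}$ is stable by Corollary~\ref{3.2}.2. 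Since $R$ has finite character, Proposition~\ref{3.5}.2 then gives that $R$ is stable.

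The step I expect to be the main obstacle is the sufficiency direction at the conductor prime: one must extract from the simplicity of $\overline R/R$ exactly the hypotheses of Corollary~\ref{3.2}.2 for $R_{\mathfrak f}$ — the quadraticity of $R_{\mathfrak f}\subseteq\overline{R_{\mathfrak f}}$ and, above all, the bound $|\max(\overline{R_{\mathfrak f}})|\le 2$, which rests on the length identity $\boldsymbol{l}_R(\overline R/\mathfrak f)=2$ together with the fact that $\overline{R_{\mathfrak f}}/R_{\mathfrak f}$ remains simple after localization.
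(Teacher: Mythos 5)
Your proof is correct, and its skeleton coincides with the paper's: both rest on the observation that maximality of $R$ in $\overline R$ forces $(J\DP J)\in\{R,\overline R\}$ for the relevant (fractional) ideals $J$, and both bound the number of primes of $\overline R$ over the conductor by the length computation $\boldsymbol l_R(\overline R/(R\DP\overline R))=\boldsymbol l_R(\overline R/R)+\boldsymbol l_R(R/(R\DP\overline R))=2$. The execution differs in three places. In part 1, for the case $(\mathfrak m\DP\mathfrak m)=R$, the paper exploits divisoriality of $\mathfrak m$ to get $R=(\mathfrak m\mathfrak m^{-1})^{-1}$ and hence $v$-invertibility, whereas you use the classical fact $(R\DP\mathfrak m)\supsetneq R$ for one-dimensional noetherian domains; both are standard. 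In the forward direction of part 2, the paper simply cites Proposition~\ref{3.3}.1 to get that $R\subset\overline R$ is quadratic, so every intermediate module is a ring and hence equals $R$ or $\overline R$; your direct argument via $(M\DP M)$ and local principality of invertible fractional ideals avoids that citation at the cost of a longer computation (and implicitly of extending the stability hypothesis from integral to fractional ideals, which is routine but worth a sentence). In the converse direction, the paper stays global — it verifies the hypotheses of Proposition~\ref{3.3}.1 to conclude $R$ is finitely stable and then uses noetherianness — while you localize and assemble stability from Corollary~\ref{3.2}.2 and Proposition~\ref{3.5}.2; your route is slightly longer but makes the local structure at the conductor prime explicit, which is the structure actually used later in Proposition~\ref{5.7}. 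All steps check out; there is no gap.
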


\begin{proof}
1. Let $\mathfrak m\in\mathfrak X(R)$. Then $(\mathfrak m\DP\mathfrak m)$ is an intermediate ring of $R$ and $\overline{R}$, and hence $(\mathfrak m\DP\mathfrak m)\in\{R,\overline{R}\}$. If $(\mathfrak m\DP\mathfrak m)=\overline{R}$, then $\mathfrak m$ is clearly an invertible ideal of $(\mathfrak m\DP\mathfrak m)$, since $\overline{R}$ is a Dedekind domain. Now let $(\mathfrak m\DP\mathfrak m)=R$. Since $\mathfrak m$ is divisorial, we have that $R=(\mathfrak m\DP\mathfrak m)=((R\DP \mathfrak m^{-1})\DP \mathfrak m)=(R\DP \mathfrak m\mathfrak m^{-1})=(\mathfrak m\mathfrak m^{-1})^{-1}$, and thus $\mathfrak m$ is $v$-invertible. Consequently, $\mathfrak m$ is invertible.

2. First let $R$ be stable. Then $R\subset\overline{R}$ is a quadratic extension by Proposition~\ref{3.3}.1. Let $N$ be an $R$-submodule of $\overline{R}$ with $R\subset N$. Then $N$ is an intermediate ring of $R$ and $\overline{R}$. Consequently, $N\in\{R,\overline{R}\}$, and hence $\overline{R}/R$ is a simple $R$-module.

Conversely, let $\overline{R}/R$ be a simple $R$-module. Obviously, $R\subset\overline{R}$ is a quadratic extension. Since $\overline{R}/R$ and $R/(R\DP\overline{R})$ are isomorphic as $R$-modules, we have that $(R\DP\overline{R})\in\mathfrak X(R)$. Let $\mathfrak m\in\mathfrak X(R)$. If $\mathfrak m\not=(R\DP\overline{R})$, then $R_{\mathfrak m}$ is a discrete valuation domain, and hence there is precisely one maximal ideal of $\overline{R}$ lying over $\mathfrak m$. Now let $\mathfrak m=(R\DP\overline{R})$ and set $k=|\{\mathfrak q\in\mathfrak X(\overline{R})\mid\mathfrak q\cap R=\mathfrak m\}|$. Assume that $k\geq 3$. Then there are some distinct $\mathfrak q_1,\mathfrak q_2,\mathfrak q_3\in\mathfrak X(\overline{R})$ such that $\mathfrak q_1\cap R=\mathfrak q_2\cap R=\mathfrak q_3\cap R$. Therefore, $\mathfrak m\subset\mathfrak q_1\cap\mathfrak q_2\cap\mathfrak q_3\subsetneq \mathfrak q_1\cap\mathfrak q_2\subsetneq\mathfrak q_1\subsetneq\overline{R}$, and thus $\boldsymbol l_R(\overline{R}/\mathfrak m)\geq 3$. On the other hand $\boldsymbol l_R(\overline{R}/\mathfrak m)=\boldsymbol l_R(\overline{R}/R)+\boldsymbol l_R(R/\mathfrak m)=2$, a contradiction. Consequently, $k\leq 2$ and thus $R$ is finitely stable by Proposition~\ref{3.3}.1. Since $R$ is noetherian, we have that $R$ is stable.
\end{proof}

In the next proposition we study the catenary degree of finitely primary monoids stemming from one-dimensional local stable domains. We establish an upper bound for their catenary degree in case when $R\subset\overline R$ is a maximal proper subring.

(i) Let $H$ be a finitely primary monoid of rank one. In general, the map $\theta\colon H\to\mathsf v_p(H)$, $a\mapsto\mathsf v_p(a)$, need not be a transfer homomorphism.
(Example: If $H=[\varepsilon_1 p,\varepsilon_2 p, p^2 ]\subset F^{\times}\times\mathcal F(\{p\})$ with $\varepsilon_i\varepsilon_j\ne 1$ for all $i,j\in [1,2]$).

(ii) Let us consider the following example: let $H$ be a reduced finitely primary monoid of rank one, say
\[
H\subset F^{\times}\times\mathcal F(\{p\})\,.
\]
Suppose that $H$ is generated by the following $k+1$ elements, where $k$ is even:
\[
\varepsilon_1 p ,\ldots,\varepsilon_k p, p^2\,,
\]
where $\varepsilon_1\cdot\ldots\cdot\varepsilon_k$ is a minimal product-one sequence in the group $F^{\times}$.

Then
\[
(\varepsilon_1 p)\cdot\ldots\cdot (\varepsilon_k p)=p^2\cdot\ldots\cdot p^2\quad\text{($k/2$ times)}
\]
is a minimal relation of atoms of $H$, whence $\mathsf c(H)\ge k$.

\smallskip
\begin{proposition}\label{5.7}
Let $R$ be a local stable order in a Dedekind domain. Define $R_0=R$.
\begin{enumerate}
\item Suppose that $R_0\subsetneq R_1\subsetneq\dots\subsetneq R_n=\overline{R}$ where $R_i=(\mathfrak m_{i-1}\DP\mathfrak m_{i-1})$ and $R_{i-1}$ is local with maximal ideal ${\mathfrak m}_{i-1}$ for all $i\in [1,n]$, and $\mathfrak X(R_n)=\{{\mathfrak P}_1,{\mathfrak P}_2\}$. Write ${\mathfrak m}_0=R_1m_0$ for some $m_0\in {\mathfrak m}_0\setminus\{0\}$. Then $(R\DP\overline{R})=m_0^n\overline{R}={\mathfrak P}_1^n {\mathfrak P}_2^n$, and $R^{\bullet}$ is a finitely primary monoid of rank at most two and exponent $n$.
\item If $R\subset\overline{R}$ is a maximal proper subring, then $\mathsf c(R)\le 5$.
\end{enumerate}
\end{proposition}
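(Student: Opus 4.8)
The plan is to reduce part~2 to part~1 together with a short length count, so I start with part~1. Since $R$ is an order in a Dedekind domain, $\overline R$ is module-finite over $R$, hence a semilocal principal ideal domain, $(R\DP\overline R)\ne\{0\}$, and by Corollary~\ref{3.2} and Proposition~\ref{5.2} the monoid $R^{\bullet}$ is finitely primary of rank $s=|\mathfrak X(\overline R)|\le 2$ inside $F=\overline R^{\bullet}$; the same applies to every ring between $R$ and $\overline R$, and $\alpha\in\N$ is an exponent for $R^{\bullet}$ in this presentation exactly when $\bigl(\prod_{\mathfrak P\in\mathfrak X(\overline R)}\mathfrak P\bigr)^{\alpha}\subset(R\DP\overline R)$.

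For part~1 I would induct along the chain. Each $R_i$ with $i<n$ is a local stable order in $\overline R$ that is not a valuation domain, so $\mathfrak m_i$ is not invertible in $R_i$, whence $(\mathfrak m_i\DP\mathfrak m_i)=\mathfrak m_i^{-1}=R_{i+1}$; stability makes $\mathfrak m_i=m_iR_{i+1}$ principal over the local ring $R_{i+1}$, and $R_i$ is divisorial (Proposition~\ref{3.5}.4), so $R_{i+1}/R_i=\mathfrak m_i^{-1}/R_i$ is a simple $R_i$-module (Lemma~\ref{3.4}.1). From $\mathfrak m_iR_{i+1}\subset\mathfrak m_i\subset R_i$ and $R_{i+1}\not\subset R_i$ one gets $(R_i\DP R_{i+1})=\mathfrak m_i$, hence
\[
(R_i\DP\overline R)=\bigl((R_i\DP R_{i+1})\DP\overline R\bigr)=(\mathfrak m_i\DP\overline R)=m_i\,(R_{i+1}\DP\overline R)\,,
\]
and downward induction gives $(R\DP\overline R)=m_0m_1\cdots m_{n-1}\overline R$. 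The crux --- and the main obstacle --- is to show $\mathsf v_{\mathfrak P_1}(m_i)=\mathsf v_{\mathfrak P_2}(m_i)=1$ for every $i$. For $i=n-1$ the ideal $\mathfrak m_{n-1}=m_{n-1}\overline R$ is an $\overline R$-ideal and $\overline R/R_{n-1}$ is simple, so $\boldsymbol l_{R_{n-1}}(\overline R/\mathfrak m_{n-1})=\boldsymbol l_{R_{n-1}}(\overline R/R_{n-1})+\boldsymbol l_{R_{n-1}}(R_{n-1}/\mathfrak m_{n-1})=2$; writing $m_{n-1}\overline R=\mathfrak P_1^{a}\mathfrak P_2^{b}$ with $a,b\ge 1$ (as $\mathfrak m_{n-1}\subset\mathfrak P_j$) and using $\boldsymbol l_{R_{n-1}}(\overline R/\mathfrak m_{n-1})\ge a+b$ forces $a=b=1$ and $\kappa(\mathfrak P_j)=\kappa(\mathfrak m_{n-1})$. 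Descending the chain --- here $\mathfrak m_i$ not invertible in $R_i$ gives $\mathfrak m_i\mathfrak m_i^{-1}=\mathfrak m_i$, so $\mathfrak m_i$ is already an ideal of $R_{i+1}$ and its residue data is controlled --- analogous length estimates yield $\mathsf v_{\mathfrak P_j}(m_i)=1$ throughout; this propagation is the delicate point. Hence $(R\DP\overline R)=m_0^n\overline R=\mathfrak P_1^n\mathfrak P_2^n$, and since the $\mathfrak P_j$-value of the conductor is $n$, $R^{\bullet}$ is finitely primary of rank $2$ and exponent $n$.

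For part~2, let $R\subsetneq\overline R$ be a maximal proper subring. Then $R$ is not a valuation domain, so $\mathfrak m=\mathfrak m_0$ is not invertible and $R\subsetneq(\mathfrak m\DP\mathfrak m)\subset\overline R$; by maximality $(\mathfrak m\DP\mathfrak m)=\overline R$, so $\mathfrak m$ is an $\overline R$-submodule of $R$, whence $\mathfrak m\subset(R\DP\overline R)\subset\mathfrak m$ and $(R\DP\overline R)=\mathfrak m$. By Lemma~\ref{5.6}.2, $\overline R/R$ is a simple $R$-module, so
\[
\boldsymbol l_R\bigl(\overline R/(R\DP\overline R)\bigr)=\boldsymbol l_R(\overline R/\mathfrak m)=\boldsymbol l_R(\overline R/R)+\boldsymbol l_R(R/\mathfrak m)=2\,.
\]
Write $(R\DP\overline R)=\prod_{\mathfrak P\in\mathfrak X(\overline R)}\mathfrak P^{e_{\mathfrak P}}$ with all $e_{\mathfrak P}\ge 1$ (since $(R\DP\overline R)\subset\mathfrak m\subset\mathfrak P$). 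Then $\sum_{\mathfrak P}e_{\mathfrak P}=\boldsymbol l_{\overline R}\bigl(\overline R/(R\DP\overline R)\bigr)\le\boldsymbol l_R\bigl(\overline R/(R\DP\overline R)\bigr)=2$, so $e_{\mathfrak P}\le 2$ for every $\mathfrak P$ and thus $\bigl(\prod_{\mathfrak P\in\mathfrak X(\overline R)}\mathfrak P\bigr)^{2}\subset(R\DP\overline R)$; that is, $2$ is an exponent of the finitely primary monoid $R^{\bullet}$, whose rank is $s\le 2$. By \eqref{s=1} (if $s=1$) and \eqref{s=2} (if $s=2$) we conclude $\mathsf c(R)=\mathsf c(R^{\bullet})\le\max\{3\cdot 2-1,\ 2\cdot 2+1\}=5$.
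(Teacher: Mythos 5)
Your part 2 is correct and complete: the identification $(R\DP\overline{R})=\mathfrak m$ via maximality of $R$ in $\overline R$, the length computation $\boldsymbol l_R(\overline R/\mathfrak m)=2$ from Lemma~\ref{5.6}.2, and the comparison $\boldsymbol l_{\overline R}\le\boldsymbol l_R$ together bound the exponent by $2$ in rank $s\le 2$, whence $\mathsf c(R)\le 5$ by \eqref{s=1} and \eqref{s=2}. This is a genuinely different and more uniform route than the paper's, which distinguishes the cases $|\max(\overline R)|=1$ (quoting Olberding's $\mathfrak P^2\subset\mathfrak m$ from \cite[Proposition 4.2]{Ol02a}) and $|\max(\overline R)|=2$ (invoking part 1 with $n=1$); your single length estimate replaces both cases.

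Part 1, however, contains a genuine gap, and you flag it yourself. The assertion $\mathsf v_{\mathfrak P_1}(m_i)=\mathsf v_{\mathfrak P_2}(m_i)=1$ for \emph{all} $i$ is precisely the content of the proposition --- it is what produces the exponent $n$ and the formula $\mathfrak P_1^n\mathfrak P_2^n$ --- and ``analogous length estimates \dots\ this propagation is the delicate point'' is not a proof. Your argument for $i=n-1$ works because $\mathfrak m_{n-1}=m_{n-1}\overline R$ is an $\overline R$-ideal, so its colength over $\overline R$ equals $a+b$ and can be compared with its colength over $R_{n-1}$. For $i<n-1$ the ideal $\mathfrak m_i=m_iR_{i+1}$ is only an $R_{i+1}$-ideal; what the simplicity of $R_{i+1}/R_i$ controls is $\boldsymbol l_{R_i}(R_{i+1}/m_iR_{i+1})=2$, whereas what you need is $\boldsymbol l_{\overline R}(\overline R/m_i\overline R)\le 2$. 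Passing from one to the other requires an extra input (for instance, that $\boldsymbol l_S(M/xM)$ is the same for all rank-one torsion-free finitely generated $S$-modules $M$, or Olberding's structure theory), and the inclusion $\mathfrak m_i\overline R\subset\mathfrak m_{n-1}\overline R=\mathfrak P_1\mathfrak P_2$ that is easy to obtain only yields $\mathsf v_{\mathfrak P_j}(m_i)\ge 1$, which is the wrong direction. As written, your computation gives $(R\DP\overline R)=m_0\cdots m_{n-1}\overline R\subset\mathfrak P_1^n\mathfrak P_2^n$ but not equality, so the exponent claim does not follow. The paper sidesteps the issue entirely by citing \cite[Proposition 4.2]{Ol02a}, which gives the much stronger fact that the \emph{same} element $m_0$ satisfies $\mathfrak m_i=m_0R_{i+1}$ for every $i$, together with \cite[Corollary 4.4]{Ol02a} for $\mathfrak m_{n-1}=\mathfrak P_1\mathfrak P_2$; with these two facts the equality $(R\DP\overline R)=m_0^n\overline R=\mathfrak P_1^n\mathfrak P_2^n$ is immediate. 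To repair your version you must either prove the valuation claim for every level of the chain or import these external results.
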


\begin{proof}
1. All domains $R_0,\ldots, R_{n-1}$ are local with maximal ideals ${\mathfrak m}_i$ such that ${\mathfrak m}_i={\mathfrak m}_0R_{i+1}=m_0R_{i+1}$ by \cite[Proposition 4.2]{Ol02a} and also the Jacobson radical of $R_n$, $J_n={\mathfrak P}_1\cap {\mathfrak P}_2={\mathfrak P}_1{\mathfrak P}_2={\mathfrak m}_{n-1}$ and for some $k>0$, $J_n^k=({\mathfrak P}_1{\mathfrak P}_2)^k={\mathfrak m}_{n-1}^k\subset {\mathfrak m}_0$ by \cite[Corollary 4.4]{Ol02a}. Therefore, ${\mathfrak m}_{n-1}^{k}R_n\subset {\mathfrak m}_0\subset R$, i.e., $(R\DP R_n)={\mathfrak m}_{n-1}^k={\mathfrak P}_1^k{\mathfrak P}_2^k$ and since ${\mathfrak m}_{n-1}=m_0R_n$, $(R\DP R_n)=m_0^kR_n$. Also $m_0^nR_n=m_0^{n-1}m_0R_n=m_0^{n-1}{\mathfrak m}_{n-1}\subset m_0^{n-1}R_{n-1}\subset\dots\subset m_0R_1={\mathfrak m}\subset R$ and $(R\DP R_n)=m_0^nR_n$. Therefore, $(R\DP\overline{R})=m_0^n\overline{R}={\mathfrak P}_1^n{\mathfrak P}_2^n$ and $R^{\bullet}$ is a finitely primary monoid of rank two and exponent $n$.

2. Let $\mathfrak m$ denote the maximal ideal of $R$. Since $|\max(\overline{R})|\le 2$ by Proposition~\ref{3.3}.1, we distinguish two cases.
	
First, suppose that $\overline{R}$ is local with maximal ideal $\mathfrak P$. Then by \cite[Proposition 4.2 (i)]{Ol02a}, $\mathfrak P^2 \subset\mathfrak m$, which implies that $\mathfrak P^2\overline{R}\subset\mathfrak m\subset R$ and hence $(R\DP\overline{R})=\mathfrak P^k$ with $k\in\{1,2\}$. Thus $R^\bullet$ is finitely primary of rank one and exponent two, whence $\mathsf c(R)\le 5$ by \eqref{s=1}.
	
Second, suppose that $\max(\overline{R})=\{\mathfrak P_1,\mathfrak P_2\}$. Then 1. shows that $(R\DP\overline{R})=\mathfrak P_1\mathfrak P_2$. Thus $R^\bullet$ is finitely primary of rank two and exponent one, whence $\mathsf c(R)\le 3$ by \eqref{s=2}.
\end{proof}

For an atomic monoid $H$, we set
\[
\daleth(H)=\sup\{\min(L\setminus\{2\})\mid 2\in L\in\mathcal L(H),|L|>1\}\,.
\]
Then $\daleth(H)=0$ if and only if $\mathsf L(uv)=\{2\}$ for all $u,v\in\mathcal A(H)$, and $\daleth(H)\ge 3$ otherwise. If $H$ is not half-factorial, then
\begin{equation}\label{basic-inequality-2}
\daleth(H)\le 2+\sup\Delta(H)\,.
\end{equation}
The question of whether equality holds was studied a lot. Among others, equality holds for large classes of Krull domains (\cite[Corollary 4.5]{Ge-Gr-Sc11a}), for numerical monoids $H$ with $|\mathcal A(H)|=2$, but not for all finitely primary monoids.

\begin{proposition}\label{5.8}
Let $R$ be a local domain with maximal ideal $\mathfrak m$ such that $R$ is not a field and $\bigcap_{n\in\N_0} \mathfrak m^n=\{0\}$, let $x\in\mathfrak m$ be such that $\mathfrak m^2=x\mathfrak m$ and let $U=xR$.
\begin{enumerate}
\item $\mathcal{I}(R)$ is a reduced atomic monoid, $U$ is a cancellative atom of $\mathcal{I}(R)$ and for every $I\in\mathcal{I}(R)\setminus\{R\}$ there are $n\in\N_0$ and $J\in\mathcal{A}(\mathcal{I}(R))$ such that $I=U^nJ$.
\item $\daleth(\mathcal{I}(R) )=2+\sup\Delta(\mathcal{I}(R) )$ and $\daleth(\mathcal{I^*}(R) )=2+\sup\Delta(\mathcal{I^*}(R) )$.
\end{enumerate}
\end{proposition}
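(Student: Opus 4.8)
The plan is to derive the arithmetic statement in part~2 from the structural decomposition in part~1, so I would treat them in that order.

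\emph{Part 1.} Being a semigroup of $d$-ideals of $R^{\bullet}$, $\mathcal{I}(R)$ is automatically reduced, and it is atomic (in fact a \BF-monoid) by Lemma~\ref{4.1} applied to the $d$-system on $R^{\bullet}$: the hypothesis $\bigcap_{n}\mathfrak m^{n}=\{0\}$ is precisely $\bigcap_{n}(\mathfrak m^{n})_d=\emptyset$, and a local domain trivially has finite $d$-character. One has $x\neq 0$ (else $\mathfrak m^{2}=\{0\}$, impossible in a domain with $\mathfrak m\neq\{0\}$) and $x\notin\mathfrak m^{2}$ (from $x\in\mathfrak m^{2}=x\mathfrak m$ one gets $x(1-m)=0$ with $m\in\mathfrak m$, hence $x=0$), so $x\in\mathfrak m\setminus\mathfrak m^{2}$. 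Since $U=xR$ is principal it is invertible in $R$, hence cancellative in $\mathcal{I}(R)$, and it is an atom because a factorization $U=I_1I_2$ with $I_1,I_2\subsetneq R$ would force $x\in I_1I_2\subseteq\mathfrak m^{2}$. The key observation, reused in part~2, is that \emph{any ideal $J$ with $\{0\}\neq J\subseteq\mathfrak m$ and $J\not\subseteq xR$ is an atom of $\mathcal{I}(R)$}, since a proper factorization would land in $\mathfrak m^{2}=x\mathfrak m\subseteq xR$. Finally, given $I\in\mathcal{I}(R)\setminus\{R\}$, set $n=\max\{k\in\N_0\colon I\subseteq x^{k}R\}$, which is well defined because $x^{k}R\subseteq\mathfrak m^{k}$, $\bigcap_k\mathfrak m^{k}=\{0\}$ and $I\neq\{0\}$. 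Then $J:=x^{-n}I$ is an ideal of $R$ with $I=U^{n}J$; if $J=R$ then $n\geq 1$ and $I=U^{n-1}\cdot U$, and if $J\neq R$ then $J\subseteq\mathfrak m$, $J\not\subseteq xR$ by maximality of $n$, so $J$ is an atom.

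\emph{Part 2, setup.} Write $H$ for $\mathcal{I}(R)$ or $\mathcal{I}^{*}(R)$. As $\mathcal{I}^{*}(R)$ is a divisor-closed submonoid of $\mathcal{I}(R)$ (a divisor of an invertible ideal is invertible), it is a \BF-monoid whose atoms are the principal atoms of $\mathcal{I}(R)$, and the decomposition of part~1 stays inside $\mathcal{I}^{*}(R)$; hence part~1 holds for $H$. The inequality $\daleth(H)\leq 2+\sup\Delta(H)$ is \eqref{basic-inequality-2} (when $H$ is half-factorial the asserted equality is trivial). For the converse I would prove, by induction on $k\geq 2$, that whenever $a\in H$ satisfies $\{k,k+d\}\subseteq\mathsf L(a)$ and $\mathsf L(a)\cap[k+1,k+d-1]=\emptyset$, then $\daleth(H)\geq 2+d$. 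The case $k=2$ is immediate: $a$ is then a product of two non-units, so $0,1\notin\mathsf L(a)$ and $\min(\mathsf L(a)\setminus\{2\})=2+d$. For $k\geq 3$, take factorizations $z=A_1\cdots A_k$ and $z'=B_1\cdots B_{k+d}$ of $a$ into atoms. Every atom of $H$ lies in $\mathfrak m$, so $A_1A_2,B_1B_2\subseteq\mathfrak m^{2}=x\mathfrak m\subseteq xR$, and part~1 yields $A_1A_2=U^{p+1}J_0$ and $B_1B_2=U^{q+1}J_0'$ with $p,q\in\N_0$ and $J_0,J_0'$ atoms. Substituting the first into $z$ shows $p+k\in\mathsf L(a)$, so the gap forces $p=0$ or $p\geq d$.

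\emph{The inductive step and conclusion.} If $p\geq d$, then $\{2,p+2\}\subseteq\mathsf L(A_1A_2)$ and $\mathsf L(A_1A_2)\cap[3,d+1]=\emptyset$, because an intermediate length $r$ of $A_1A_2$ would, substituted into $z$, give a length $r+k-2\in[k+1,k+d-1]$ of $a$; hence $\min(\mathsf L(A_1A_2)\setminus\{2\})\geq d+2$ and $\daleth(H)\geq 2+d$. If $p=0$, then $J_0$ is a primitive atom, $a=U\cdot(J_0A_3\cdots A_k)$, and, as $U$ is cancellative, $a':=x^{-1}a\in H$ satisfies $k-1\in\mathsf L(a')$; substituting $B_1B_2=U^{q+1}J_0'$ gives $q+k+d-1\in\mathsf L(a')$, and from $a=U\cdot a'$ the gap of $a$ forces $\mathsf L(a')\cap[k,k+d-2]=\emptyset$. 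Thus the successor of $k-1$ in $\mathsf L(a')$ is some $k-1+e$ with $e\geq d$, so $a'$ is a level-$(k-1)$ witness for the distance $e$, and the induction yields $\daleth(H)\geq 2+e\geq 2+d$. Taking the supremum over $d\in\Delta(H)$ gives $\daleth(H)\geq 2+\sup\Delta(H)$, hence equality, for both $H=\mathcal{I}(R)$ and $H=\mathcal{I}^{*}(R)$. I expect the inductive step to be the only real obstacle: the descent works precisely because the hypothesis $\mathfrak m^{2}=x\mathfrak m$ makes the product of any two atoms divisible by the cancellative atom $U$, and the dichotomy $p\in\{0\}\cup\N_{\geq d}$ — dictated by the length gap — is what excludes the harmful intermediate situation; checking that the same argument applies verbatim to $\mathcal{I}^{*}(R)$, and the degenerate half-factorial case, is routine.
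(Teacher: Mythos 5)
Your part~1 is essentially the paper's own argument: atomicity of $\mathcal I(R)$ via Lemma~\ref{4.1} for the $d$-system (finite $d$-character being trivial for a local domain), the direct verification that $U=xR$ is a cancellative atom using $\mathfrak m^2=x\mathfrak m$ and the absence of zero-divisors, and the decomposition $I=U^nJ$ read off from the largest power of $\mathfrak m$ (equivalently, of $xR$) containing $I$. The genuine divergence is in part~2: the paper disposes of it in one line by citing \cite[Proposition 4.1]{Br-Ge-Re20}, which is precisely the statement that a monoid with the structure established in part~1 satisfies $\daleth(H)=2+\sup\Delta(H)$, whereas you reprove this from scratch. Your induction on the level $k$ of a witness for a distance $d\in\Delta(H)$ is correct as far as I can check: the base case $k=2$ is immediate since $0,1\notin\mathsf L(a)$ once $2\in\mathsf L(a)$; the key point that any product of two atoms lies in $\mathfrak m^2=x\mathfrak m$ and is therefore of the form $U^{p+1}J_0$ with $J_0$ an atom is a correct (if slightly compressed) consequence of part~1; the length gap forces the dichotomy $p=0$ or $p\ge d$; the case $p\ge d$ produces a two-atom witness with $\min(\mathsf L(A_1A_2)\setminus\{2\})\ge 2+d$; and the case $p=0$ cancels the cancellative atom $U$ and descends to level $k-1$ with a distance $e\ge d$ while preserving the gap hypothesis. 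The argument also restricts verbatim to $\mathcal I^*(R)$, since $R$ is local, invertible ideals are principal, and the decomposition $aR=U^n(x^{-n}aR)$ stays inside the principal ideals. What your route buys is a self-contained proof in place of an external citation; what it costs is only length. The one blemish --- shared with the paper's formulation --- is the degenerate half-factorial case, where $\daleth(H)=0$ while $2+\sup\emptyset$ is conventionally $2$, so the asserted equality is really only meaningful when $\Delta(H)\neq\emptyset$; this is a convention issue inherited from the cited proposition, not a gap in your argument.
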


\begin{proof}
1. It follows from Lemma~\ref{4.1} that $\mathcal{I}(R)$ is an atomic monoid. Since $R$ is not a field, we have that $U$ is a non-zero proper ideal of $R$. If $I$ and $J$ are non-zero ideals of $R$ such that $UI=UJ$, then $xI=xJ$, and hence $I=J$. Therefore, $U$ is cancellative. Assume that $U$ is not an atom of $\mathcal{I}(R)$. Then there are some proper $A,B\in\mathcal{I}(R)$ such that $U=AB$. We infer that $xR\subset\mathfrak m^2=x\mathfrak m$. Consequently, $x=xu$ for some $u\in\mathfrak m$, and thus $1=u\in\mathfrak m$, a contradiction. This implies that $U$ is an atom of $\mathcal{I}(R)$. Now let $I$ be a non-zero proper ideal of $R$. Then $I\subset\mathfrak m$, and since $\bigcap_{n\in\N_0}\mathfrak m^n=\{0\}$, there is some $m\in\N$ such that $I\subset\mathfrak m^m$ and $I\not\subset\mathfrak m^{m+1}$. We infer that $I\subset x^{m-1}\mathfrak m$. Set $n=m-1$. Then $n\in\N_0$ and there is some proper $J\in\mathcal{I}(R)$ such that $I=x^nJ=U^nJ$. Assume that $J$ is not an atom of $\mathcal{I}(R)$. Then there are some non-zero proper ideals $A$ and $B$ of $R$ with $J=AB$, and thus $J\subset\mathfrak m^2=x\mathfrak m$. Therefore, $I=x^nJ\subset x^m\mathfrak m=\mathfrak m^{m+1}$, a contradiction. It follows that $J\in\mathcal{A}(\mathcal{I}(R))$.

2. This follows from 1. and from \cite[Proposition 4.1]{Br-Ge-Re20}.
\end{proof}

\begin{theorem}\label{5.9}
Let $R$ be a one-dimensional Mori domain such that for every $\mathfrak m\in\mathfrak X(R)$, $\mathfrak m$ is stable and $(R_{\mathfrak m}\DP\widehat{R_{\mathfrak m}})\not=\{0\}$.
\begin{enumerate}
\item The following statements are equivalent.
\begin{enumerate}
\item[(a)] $\mathcal{I}(R)$ is transfer Krull.
\item[(b)] $\mathcal{I}^*(R)$ is transfer Krull.
\item[(c)] $\mathcal{I}^*(R)$ is half-factorial.
\item[(d)] $\mathcal{I}(R)$ is half-factorial.
\item[(e)] $\mathsf c(\mathcal{I}(R))\leq 2$.
\item[(f)] $\mathsf c(\mathcal{I}^*(R))\leq 2$.
\end{enumerate}
If these conditions hold, then the map $\pi\colon\mathfrak X(\widehat R)\to\mathfrak X(R)$, defined by $\mathfrak P\mapsto\mathfrak P\cap R$, is bijective.
\smallskip
\item $\daleth(\mathcal{I}(R))=2+\sup\Delta(\mathcal{I}(R))$ and $\daleth(\mathcal{I^*}(R))=2+\sup\Delta(\mathcal{I^*}(R))$.
\end{enumerate}
\end{theorem}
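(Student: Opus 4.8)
The plan is to reduce both parts to the localizations $R_{\mathfrak m}$, $\mathfrak m\in\mathfrak X(R)$, by means of Proposition~\ref{4.3}, and then feed in the local results of Sections~\ref{4} and~\ref{5}. First I would record the standing facts. Since $R$ is a one-dimensional Mori domain, $\mathfrak X(R)=\max(R)$ and $R$ has finite character, so $R$ is a weakly Krull Mori domain; and for each $\mathfrak m$ the ring $R_{\mathfrak m}$ is a one-dimensional local Mori domain with $(R_{\mathfrak m}\DP\widehat{R_{\mathfrak m}})\ne\{0\}$, hence $R_{\mathfrak m}^{\bullet}$ is finitely primary by \cite[Proposition 2.10.7]{Ge-HK06a}, therefore strongly primary, so $\bigcap_{n}(\mathfrak m R_{\mathfrak m})^n=\{0\}$ and consequently $\bigcap_n\mathfrak m^n=\{0\}$. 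Stability of $\mathfrak m$ localizes, so $\mathfrak m R_{\mathfrak m}$ is a stable maximal ideal; its endomorphism ring $T=(\mathfrak m R_{\mathfrak m}\DP\mathfrak m R_{\mathfrak m})$ lies between $R_{\mathfrak m}$ and the one-dimensional ring $\widehat{R_{\mathfrak m}}$ (a semilocal principal ideal domain, since $R_{\mathfrak m}^{\bullet}$ is finitely primary), so $T$ is a one-dimensional $G$-domain, hence semilocal; since an invertible ideal of a semilocal domain is principal, $\mathfrak m R_{\mathfrak m}=xT$ for some $x\in\mathfrak m$, and thus $\mathfrak m^2 R_{\mathfrak m}=x\,\mathfrak m R_{\mathfrak m}\subseteq xR_{\mathfrak m}\subsetneq R_{\mathfrak m}$. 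Consequently the $d$-system on $R^{\bullet}$ satisfies the hypotheses of Lemma~\ref{4.1} (so $\mathcal I(R)$ is a unit-cancellative \BF-monoid and $\mathcal I^*(R)$ a \BF-monoid), of Proposition~\ref{4.3}, of Theorem~\ref{4.4}, and, since $\mathfrak m^2$ sits inside the proper invertible ideal of $R$ that localizes to $xR_{\mathfrak m}$ at $\mathfrak m$ and to $R_{\mathfrak n}$ at all other maximal ideals, of Corollary~\ref{4.5}.

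For Part~1 the implications ``half-factorial $\Rightarrow$ transfer Krull'' give (d)$\Rightarrow$(a) and (c)$\Rightarrow$(b), while the fact that $\mathcal I^*(R)$ is a divisor-closed submonoid of $\mathcal I(R)$ gives (a)$\Rightarrow$(b) and (d)$\Rightarrow$(c). The equivalence (c)$\Leftrightarrow$(d) is Corollary~\ref{4.5}, with the proper invertible ideal constructed above. For (b)$\Rightarrow$(c): by Proposition~\ref{4.3}.1, $\mathcal I^*(R)\cong\coprod_{\mathfrak m}(R_{\mathfrak m}^{\bullet})_{\red}$, and the same coproduct represents $\mathcal I^*_v(R)$ (for a local domain the invertible ideals are precisely the principal ones, which also coincide with the $v$-invertible divisorial ideals), so Theorem~\ref{5.1}.4 applies and yields that $\mathcal I^*(R)$ is half-factorial; this closes (a)$\Leftrightarrow$(b)$\Leftrightarrow$(c)$\Leftrightarrow$(d). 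For (e) and (f) I would use that the catenary degree of a coproduct is the supremum of the catenary degrees of the factors (\cite[Proposition 1.6.8]{Ge-HK06a}) together with Proposition~\ref{4.3}.1 to reduce to $\mathsf c(R_{\mathfrak m}^{\bullet})\le 2$ and $\mathsf c(\mathcal I(R_{\mathfrak m}))\le 2$; combining \eqref{s=1}, Proposition~\ref{4.2} and Proposition~\ref{5.8}.1 one sees that these hold precisely when $R_{\mathfrak m}^{\bullet}$, respectively $\mathcal I(R_{\mathfrak m})$, is half-factorial, whereas $\mathsf c\le 2$ conversely forces half-factoriality via the inequality $2+\sup\Delta\le\mathsf c$; with Proposition~\ref{4.3}.2 this gives (e)$\Leftrightarrow$(d) and (f)$\Leftrightarrow$(c). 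Finally, if these conditions hold then every $R_{\mathfrak m}^{\bullet}$ is a half-factorial finitely primary monoid, hence of rank one by \cite[Theorem 3.1.5]{Ge-HK06a}; as the rank of $R_{\mathfrak m}^{\bullet}$ equals $|\{\mathfrak P\in\mathfrak X(\widehat R)\mid\mathfrak P\cap R=\mathfrak m\}|$, the fibres of $\pi$ are singletons and $\pi$ is bijective.

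For Part~2 I would localize once more: $\mathcal I(R)\cong\coprod_{\mathfrak m}\mathcal I(R_{\mathfrak m})$ and $\mathcal I^*(R)\cong\coprod_{\mathfrak m}\mathcal I^*(R_{\mathfrak m})$. Lifting a length set from a single factor to the coproduct gives $\daleth(\mathcal I(R))\ge\daleth(\mathcal I(R_{\mathfrak m}))$ and $\sup\Delta(\mathcal I(R))\ge\sup\Delta(\mathcal I(R_{\mathfrak m}))$ for every $\mathfrak m$, so by Proposition~\ref{5.8}.2 (whose hypotheses hold at each $R_{\mathfrak m}$ by the reductions above) $\daleth(\mathcal I(R))\ge 2+\sup\Delta(\mathcal I(R_{\mathfrak m}))$ for every $\mathfrak m$. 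Since \eqref{basic-inequality-2} gives $\daleth(\mathcal I(R))\le 2+\sup\Delta(\mathcal I(R))$, it remains to prove $\sup\Delta(\mathcal I(R))\le\sup_{\mathfrak m}\sup\Delta(\mathcal I(R_{\mathfrak m}))$, for which I would use the explicit shape of each $\mathcal I(R_{\mathfrak m})$ from Proposition~\ref{5.8}.1 — every ideal is $U_{\mathfrak m}^{\,n}J$ with $U_{\mathfrak m}$ a cancellative atom and $J$ an atom — to show that mixing the local components produces no new distances. The same argument, with $\mathcal I^*(R_{\mathfrak m})\cong(R_{\mathfrak m}^{\bullet})_{\red}$, settles $\mathcal I^*(R)$.

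The main obstacle is the local structural input, and it shows up in two places. The first is extracting, from the bare assumption that $\mathfrak m$ is stable, the principal ideal $xR_{\mathfrak m}\supseteq\mathfrak m^2 R_{\mathfrak m}$: this seems to require semilocality of the endomorphism ring $(\mathfrak m R_{\mathfrak m}\DP\mathfrak m R_{\mathfrak m})$ through the $G$-domain argument rather than stability alone, and one must also descend from $R_{\mathfrak m}$ to an honest invertible ideal of $R$. The second is the distance bookkeeping in Part~2: $\sup\Delta$ of a coproduct is in general strictly larger than the supremum of the factors' $\sup\Delta$, so one really has to exploit the rigidity of $\mathcal I(R_{\mathfrak m})$ coming from $\mathfrak m^2 R_{\mathfrak m}=x\,\mathfrak m R_{\mathfrak m}$.
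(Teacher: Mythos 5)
Your overall strategy --- localize via Proposition~\ref{4.3}, settle (c)$\Leftrightarrow$(d) through Corollary~\ref{4.5} by producing a proper invertible ideal containing $\mathfrak m^2$, get (b)$\Rightarrow$(c) from Theorem~\ref{5.1}.4, and run the catenary-degree and $\daleth$ statements through Proposition~\ref{5.8} --- is the paper's strategy. Two steps, however, do not hold up as written.

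First, the key structural input $\mathfrak m_{\mathfrak m}^2=x\mathfrak m_{\mathfrak m}$. You argue that $T=(\mathfrak m R_{\mathfrak m}\DP\mathfrak m R_{\mathfrak m})$ is ``a one-dimensional $G$-domain, hence semilocal''. That implication is false in general: a one-dimensional domain whose Jacobson radical is non-zero can have infinitely many maximal ideals (for instance, one-dimensional B\'ezout domains in which a fixed non-zero element lies in every maximal ideal, such as the integral closure of $\Z_{(p)}$ in $\overline{\Q}$). The paper's Lemma~\ref{5.3} closes exactly this hole by observing that $T$ is a \emph{Mori} $G$-domain (because $\mathfrak m_{\mathfrak m}$ is divisorial and $R_{\mathfrak m}$ is Mori), so that $\spec(T)$ is finite by \cite[Theorem 2.7.9]{Ge-HK06a}; semilocality, and hence principality of the invertible ideal $\mathfrak m_{\mathfrak m}$ of $T$, follows from that, not from the bare $G$-domain property. (Even the one-dimensionality of $T$ needs an argument in your version.) You should invoke Lemma~\ref{5.3} or reproduce its Mori argument. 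In addition, the descent to $R$ is only asserted: the paper exhibits the invertible ideal explicitly as $xR_{\mathfrak m}\cap R$ and checks that it is $t$-finitely generated and locally principal.

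Second, Part 2 is left unfinished: you prove only the lower bound $\daleth(\mathcal I(R))\ge 2+\sup_{\mathfrak m}\sup\Delta(\mathcal I(R_{\mathfrak m}))$ and defer the inequality $\sup\Delta(\mathcal I(R))\le\sup_{\mathfrak m}\sup\Delta(\mathcal I(R_{\mathfrak m}))$ to an unexecuted argument ``exploiting the rigidity of $\mathcal I(R_{\mathfrak m})$''. No rigidity is needed. In a coproduct of atomic monoids the set of lengths of an element is the sumset of the componentwise sets of lengths, and for sumsets one always has $\sup\Delta(L_1+\cdots+L_n)\le\max_i\sup\Delta(L_i)$: if $a=\sum a_i$ is not maximal, some $a_i$ can be replaced by its successor in $L_i$, which bounds the gap above $a$. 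Likewise $\daleth\big(\coprod_iH_i\big)=\sup_i\daleth(H_i)$, since an element of the coproduct whose set of lengths contains $2$ and has more than one element must be concentrated in a single component. These are precisely the coproduct identities the paper states and uses; without them (or your promised substitute, which you never supply) Part 2 is not proved.
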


\begin{proof}
1. Suppose that Condition (c) holds. By Proposition~\ref{4.3}.2, $\mathcal I^*(R)$ is half-factorial if and only if $R_{\mathfrak p}$ is half-factorial for every $\mathfrak p\in\mathfrak X(R)$. Thus the map $\pi$ is bijective by Theorem~\ref{5.1}.1.

(a) $\Rightarrow$ (b) $\mathcal{I}^*(R)\subset\mathcal{I}(R)$ is a divisor-closed submonoid, and divisor-closed submonoids of transfer Krull monoids are transfer Krull.

(b) $\Rightarrow$ (c) Since $R$ is a one-dimensional Mori domain, we have that $\mathcal I_v^*(R)=\mathcal I^*(R)$, and thus the assertion follows from Theorem~\ref{5.1}.4.

(c) $\Rightarrow$ (d) Since $R$ is a one-dimensional Mori domain, we have that $R$ is of finite character. Furthermore, if $\mathfrak m\in\mathfrak X(R)$, then $R_{\mathfrak m}$ is a one-dimensional local Mori domain with non-zero conductor, and hence $R_{\mathfrak m}$ is finitely primary. By Corollary~\ref{4.5} it remains to show that for every $\mathfrak m\in\mathfrak X(R)$, $\mathfrak m^2$ is contained in a proper invertible ideal of $R$. Let $\mathfrak m\in\mathfrak X(R)$. Since $R$ is a Mori domain and $\mathfrak m(\mathfrak m\DP\mathfrak m^2)=(\mathfrak m\DP\mathfrak m)$, we infer that $\mathfrak m_{\mathfrak m}(\mathfrak m_{\mathfrak m}\DP\mathfrak m_{\mathfrak m}^2)=(\mathfrak m_{\mathfrak m}\DP\mathfrak m_{\mathfrak m})$, i.e., $\mathfrak m_{\mathfrak m}$ is a stable ideal of $R_{\mathfrak m}$. Clearly, $\mathfrak m_{\mathfrak m}$ is a divisorial ideal of $R_{\mathfrak m}$. It follows from Lemma~\ref{5.3} that $\mathfrak m_{\mathfrak m}^2=x\mathfrak m_{\mathfrak m}$ for some $x\in \mathfrak m_{\mathfrak m}$. Observe that $\mathfrak m^2=\mathfrak m_{\mathfrak m}^2\cap R\subset xR_{\mathfrak m}\cap R$. Moreover, $xR_{\mathfrak m}\cap R$ is $t$-finitely generated and locally principal and $xR_{\mathfrak m}\cap R\subset\mathfrak m$, and thus $xR_{\mathfrak m}\cap R$ is a proper invertible ideal of $R$.

(d) $\Rightarrow$ (a) All half-factorial monoids are transfer Krull.

(d) $\Rightarrow$ (e) Let $\mathfrak m\in\mathfrak X(R)$. By Proposition~\ref{4.3}.2 we have that $\mathcal{I}(R_{\mathfrak m})$ is half-factorial. Note that $R_{\mathfrak m}$ is a Mori domain and a G-domain. Since $\mathfrak m$ is stable and $R$ is a Mori domain, we have that $\mathfrak m_{\mathfrak m}$ is a stable ideal of $R_{\mathfrak m}$. Clearly, $\mathfrak m_{\mathfrak m}$ is a divisorial ideal of $R_{\mathfrak m}$. Therefore, $\mathfrak m_{\mathfrak m}^2=x\mathfrak m_{\mathfrak m}$ for some $x\in \mathfrak m_{\mathfrak m}$ by Lemma~\ref{5.3}. Since $R_{\mathfrak m}$ is a one-dimensional local Mori domain, it follows that $\bigcap_{n\in\N_0} \mathfrak m_{\mathfrak m}^n=\{0\}$. We infer by Proposition~\ref{5.8} and \cite[Proposition 4.1.4]{Br-Ge-Re20} that $\mathsf c(\mathcal{I}(R_{\mathfrak m}))\leq 2$. Therefore, $\mathsf c(\mathcal{I}(R))\leq 2$ by Theorem~\ref{5.1}.

(e) $\Rightarrow$ (f) This is obvious, since $\mathcal{I}^*(R)$ is a divisor-closed submonoid of $\mathcal{I}(R)$.

(f) $\Rightarrow$ (c) Since $\mathcal{I}^*(R)$ is cancellative, this follows from \eqref{basic-inequality-1}.

2. If $(H_i)_{i\in I}$ is a family of atomic monoids, then
\[
\sup\Delta\big(\coprod_{i\in I} H_i\big)=\sup\{\sup\Delta(H_i)\mid i\in I\}\quad\text{and}\quad\daleth\big(\coprod_{i\in I} H_i\big)=\sup\{\daleth(H_i)\mid i\in I\}\,.
\]
Thus the claim follows from Propositions~\ref{4.3} and~\ref{5.8}.2.
\end{proof}

Let $R$ be as in Theorem~\ref{5.9}. Clearly, we have $\daleth(\mathcal{I^*}(R))\le\daleth(\mathcal{I}(R))$, but in general we do not have equality.

By Theorem~\ref{3.7}, stable domains with non-zero conductor, that are Mori or weakly Krull, are already orders in Dedekind domains. Thus our next result is formulated for stable orders in Dedekind domains.
Its first part generalizes a result valid for orders in quadratic number fields (\cite[Theorem 1.1]{Br-Ge-Re20}). Note, if $R$ is a semilocal domain, then $\Pic(R)=\boldsymbol 0$. This means that every invertible ideal is principal, whence $\mathcal I^*(R)=\{ aR\mid a\in R^{\bullet}\}\cong(R^{\bullet})_{\red}$. If $R$ is not semilocal, then the statements for $\mathcal I^*(R)$ need not hold for $R$. If $R$ is any order in an algebraic number field, then $\mathsf c(R)\ge\mathsf c\big(\mathcal B(\Pic(R))\big)$ (\cite[Sections 3.4 and 3.7]{Ge-HK06a}). Moreover, $R$ can be transfer Krull without being half-factorial (\cite[Theorems 5.8 and 6.2]{Ge-Ka-Re15a}).

\smallskip
\begin{theorem}\label{5.10}
Let $R$ be a stable order in a Dedekind domain.
\begin{enumerate}
\item The following statements are equivalent.
\begin{enumerate}
\item[(a)] $\mathcal{I}(R)$ is transfer Krull.
\item[(b)] $\mathcal{I}^*(R)$ is transfer Krull.
\item[(c)] $\mathcal{I}^*(R)$ is half-factorial.
\item[(d)] $\mathcal{I}(R)$ is half-factorial.
\item[(e)] $\mathsf c(\mathcal{I}(R))\leq 2$.
\item[(f)] $\mathsf c(\mathcal{I}^*(R))\leq 2$.
\end{enumerate}
\item $\mathsf c_{\mon}\big(\mathcal I^*(R)\big)<\infty$.
\item $\mathcal I^*(R)$ has finite elasticity if and only if $\pi\colon\mathfrak X(\overline R)\to\mathfrak X(R)$ is bijective. If this holds, then the elasticity is accepted.
\end{enumerate}
\end{theorem}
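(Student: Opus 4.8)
The plan is to reduce all three parts to the coproduct decomposition of $\mathcal I^*(R)$ into local pieces and then to read off what is needed from the structure of those pieces established in Section~\ref{5}. Setting up: since $R$ is an order in a Dedekind domain it is one-dimensional noetherian, hence a (weakly Krull) Mori domain, and $\overline R=\widehat R$ is a finitely generated $R$-module, so $(R\DP\overline R)=(R\DP\widehat R)\ne\{0\}$; as $R$ is stable, every nonzero ideal — in particular every $\mathfrak p\in\mathfrak X(R)$ — is stable. Exactly as in the proof of Theorem~\ref{5.1}, all hypotheses of Proposition~\ref{4.3} hold for $R$ with the (finitary) $t$-system, which on a Mori domain is the $v$-system, so
\[
\mathcal I^*(R)=\mathcal I^*_v(R)\ \cong\ \coprod_{\mathfrak p\in\mathfrak X(R)}\mathcal I^*_{v_{\mathfrak p}}(R_{\mathfrak p})\ \cong\ \coprod_{\mathfrak p\in\mathfrak X(R)}(R_{\mathfrak p}^{\bullet})_{\red}\,,
\]
the last step because invertible ideals of a local domain are principal. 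Put $P_0=\{\mathfrak p\in\mathfrak X(R)\mid R_{\mathfrak p}\text{ is not a DVR}\}$; since $(R\DP\overline R)\ne\{0\}$, $P_0$ is finite and $(R_{\mathfrak p}^{\bullet})_{\red}\cong(\N_0,+)$ for $\mathfrak p\notin P_0$. For $\mathfrak p\in P_0$, $R_{\mathfrak p}$ is a one-dimensional local stable domain (Corollary~\ref{3.2}.1) with nonzero conductor, so by Proposition~\ref{5.2} the monoid $R_{\mathfrak p}^{\bullet}$ is finitely primary of rank $s_{\mathfrak p}=|\pi^{-1}(\mathfrak p)|\in\{1,2\}$ and strongly ring-like, and in particular $\widehat{R_{\mathfrak p}}^{\times}/R_{\mathfrak p}^{\times}$ is finite.

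Part 1 is then immediate: $R$ is a one-dimensional Mori domain for which each $\mathfrak m\in\mathfrak X(R)$ is stable and $(R_{\mathfrak m}\DP\widehat{R_{\mathfrak m}})=(R\DP\overline R)_{\mathfrak m}\ne\{0\}$, so the equivalence of (a)--(f) is precisely Theorem~\ref{5.9}.1.

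For Part 3 I would use that the elasticity of a coproduct of atomic monoids is the supremum of the elasticities of the factors. For $\mathfrak p\notin P_0$ we have $\rho((R_{\mathfrak p}^{\bullet})_{\red})=1$; for $\mathfrak p\in P_0$, $\rho(R_{\mathfrak p}^{\bullet})=\infty$ if $s_{\mathfrak p}=2$ by \eqref{s=2} and $\rho(R_{\mathfrak p}^{\bullet})<\infty$ if $s_{\mathfrak p}=1$ by \eqref{s=1}. Since $\overline R$ is integral over $R$ the map $\pi$ is surjective, so $\pi$ is bijective if and only if $s_{\mathfrak p}=1$ for every $\mathfrak p\in\mathfrak X(R)$; hence $\rho(\mathcal I^*(R))<\infty$ if and only if $\pi$ is bijective. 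If $\pi$ is bijective, then $\rho(\mathcal I^*(R))=\max_{\mathfrak p\in P_0}\rho(R_{\mathfrak p}^{\bullet})$ (and $\rho(\mathcal I^*(R))=1$ when $P_0=\emptyset$, in which case $R$ is Dedekind); by the rank-one statement of Proposition~\ref{5.2} each $\rho(R_{\mathfrak p}^{\bullet})$ is accepted, and taking $\mathfrak p_0\in P_0$ realizing the maximum together with some $a_0\in R_{\mathfrak p_0}^{\bullet}$ with $\rho(\mathsf L(a_0))=\rho(R_{\mathfrak p_0}^{\bullet})$, the element of $\mathcal I^*(R)$ equal to $a_0$ in the $\mathfrak p_0$-component and trivial elsewhere witnesses acceptedness.

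For Part 2 I would first note that the factorial factors $(R_{\mathfrak p}^{\bullet})_{\red}\cong(\N_0,+)$ with $\mathfrak p\notin P_0$ contribute a unique factorization to each element of the coproduct and hence do not influence the monotone catenary degree, so $\mathsf c_{\mon}(\mathcal I^*(R))=\mathsf c_{\mon}\bigl(\coprod_{\mathfrak p\in P_0}(R_{\mathfrak p}^{\bullet})_{\red}\bigr)$, a \emph{finite} coproduct whose factors are finitely primary of rank at most two with finite $\widehat{R_{\mathfrak p}}^{\times}/R_{\mathfrak p}^{\times}$ — precisely the situation treated for orders in quadratic number fields in \cite{Br-Ge-Re20}. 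I would then invoke, as done there, that such finitely primary monoids have finite monotone catenary degree and that a finite coproduct of them does too. This last point is the main obstacle: contrary to the ordinary catenary degree and the elasticity, the monotone catenary degree of a coproduct is not simply the supremum over the factors, since a monotone chain of factorizations of a fixed total length cannot shift length freely between components; one must exploit the finiteness of the sets of distances of the factors and of $\widehat{R_{\mathfrak p}}^{\times}/R_{\mathfrak p}^{\times}$ (equivalently, the tameness of the factors) to synchronise monotone chains across the finitely many nontrivial components. Granting this, $\mathsf c_{\mon}(\mathcal I^*(R))<\infty$.
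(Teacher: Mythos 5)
Your Parts 1 and 3 are correct and follow essentially the same route as the paper: Part 1 is exactly the paper's reduction to Theorem~\ref{5.9} (a stable order is a one-dimensional Mori domain in which every $\mathfrak m\in\mathfrak X(R)$ is stable and every localization has non-zero conductor), and Part 3 is the paper's argument via the coproduct decomposition of Proposition~\ref{4.3}, the identity $\rho\big(\mathcal I^*(R)\big)=\sup\{\rho(R_{\mathfrak p})\mid\mathfrak p\in\mathfrak X(R)\}$, and Proposition~\ref{5.2} together with \eqref{s=1} and \eqref{s=2}; your explicit witness for acceptedness is a welcome amount of extra detail.

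Part 2 is where your proof has a genuine gap, and it is exactly at the point you flag yourself with ``Granting this''. The assertion that a finite coproduct of strongly ring-like finitely primary monoids of rank at most two has finite monotone catenary degree is not something you can wave through: as you correctly observe, $\mathsf c_{\mon}$ does not commute with coproducts, and the synchronisation of monotone chains across components is the entire content of the step. The paper does not reprove this; it closes the gap by citing \cite[Theorem 5.13]{Ge-Re19d}, which is stated precisely for weakly Krull Mori domains with non-zero conductor whose localizations at height-one primes are strongly ring-like of rank at most two --- hypotheses you have already verified via Proposition~\ref{5.2}. So the repair is minimal: replace the appeal to the quadratic-number-field case of \cite{Br-Ge-Re20} (which does not cover general stable orders) by a direct application of \cite[Theorem 5.13]{Ge-Re19d}. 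As written, however, the proposal does not prove Part 2.
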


\begin{proof}
1. This is an immediate consequence of Theorem~\ref{5.9}.

2. Since $R$ is an order in a Dedekind domain, $R$ is a weakly Krull Mori domain with non-zero conductor. By Proposition~\ref{5.2}, the localizations $R_{\mathfrak p}$ are strongly ring-like of rank at most two. Thus $\mathcal I^*(R)$ has finite monotone catenary degree by \cite[Theorem 5.13]{Ge-Re19d}.

3. By \cite[Proposition 1.4.5]{Ge-HK06a} and by Proposition~\ref{4.3}, we have
\[
\rho\big(\mathcal I^*(R)\big)=\sup\{\rho(R_{\mathfrak p})\mid\mathfrak p\in\mathfrak X(R)\}\,.
\]
Thus the assertion follows from Proposition~\ref{5.2}.
\end{proof}

In Remark~\ref{5.11} we briefly discuss further arithmetical properties, which follow from the ones given in Theorem~\ref{5.10}. Then we work out, in a series of remarks, that
none of the statements in Theorem~\ref{5.10} holds true in general without the stability assumption.

\begin{remark}[{\bf Structure of sets of lengths and of their unions}]\label{5.11}

1. (Structure of sets of lengths) If $R$ is an order in a Dedekind domain, then sets of lengths of $\mathcal I^*(R)$ are well-structured. They are almost arithmetical multiprogressions with global bounds for all parameters (\cite[Section 4.7]{Ge-HK06a}). This holds without the stability assumption.

2. (Structure of unions of sets of lengths) Let $H$ be an atomic monoid. For every $k\in\N$,
\[
\mathcal U_k(H)=\bigcup_{k\in L\in\mathcal L(H)} L\quad\subset\N
\]
is the {\it union of sets of lengths} containing $k$. The structure theorem for unions of sets of lengths states that there is a bound $M$ such that almost all sets $\mathcal U_k(H)\cap [\min\mathcal U_k(H)+M,\max\mathcal U_k(H)-M]$ are arithmetical progressions with difference $\min\Delta(H)$. Now
every atomic monoid with accepted elasticity satisfies this structure theorem for unions of sets of lengths, and the initial parts $\mathcal U_k(H)\cap [\min\mathcal U_k(H),\min\mathcal U_k(H)+M]$ and the end parts $[\max\mathcal U_k(H)-M,\max\mathcal U_k(H)]$ fulfill a periodicity property (we refer to recent work of Tringali (\cite[Theorem 1.2]{Tr19a}).
\end{remark}

\begin{remark}[{\bf On catenary degrees}]\label{5.12}
Example~\ref{5.5}.2 offers examples of non-stable orders in Dedekind domains whose catenary degree is arbitrarily large. Furthermore, there are finitely primary monoids with arbitrarily large catenary degree (see the discussion after Lemma \ref{5.6}). Non-stable local orders in Dedekind domains may have infinite monotone catenary degree (\cite[Examples 6.3 and 6.5]{Ha09c}).
\end{remark}

\begin{remark}[{\bf Seminormal orders}]\label{5.13}
We compare the arithmetic of stable orders with the arithmetic of seminormal orders in Dedekind domains. Note that stable orders need not be seminormal (all orders in quadratic number fields are stable but not all are seminormal \cite{Do-Fo87}) and seminormal orders need not be stable (see the example given in Remark~\ref{5.15}).

Let $R$ be a seminormal order in a Dedekind domain and let $\pi\colon\mathfrak X(\overline R)\to\mathfrak X(R)$ be defined by $\pi(\mathfrak P)=\mathfrak P\cap R$ for all $\mathfrak P\in\mathfrak X(\overline R)$. If $\pi$ is bijective, then $\mathsf c\big(\mathcal I^*(R)\big)=2$. If $\pi$ is not bijective, then $\mathsf c\big(\mathcal I^*(R)\big)=3$ and $\mathsf c_{\mon}\big(\mathcal I^*(R)\big)\in\{3,5\}$ (\cite[Theorem 5.8]{Ge-Ka-Re15a}). Furthermore, $\mathcal I^*(R)$ is half-factorial if and only if $\pi$ is bijective. For stable orders, only one implication is true (see Theorem~\ref{5.9}).
\end{remark}

\begin{remark}[{\bf Half-factoriality of $\mathcal I^*(R)$ does not imply half-factoriality of $\mathcal I(R)$}]\label{5.14}~

The Statements 1.(c) and 1.(d) of Theorem~\ref{5.10} need not be equivalent for divisorial orders in Dedekind domains. We construct a local divisorial order $R$ in a Dedekind domain such that $\mathcal{I}^*(R)$ is half-factorial, and yet $\mathcal{I}(R)$ is not half-factorial.

Let $L$ be the field with $16$ elements, let $K\subset L$ be the field with $2$ elements, let $y\in L$ be such that $y^4=1+y$ and let $V=(1,y,y^2)_K$. Let $X$ be an indeterminate over $L$ and let $R=K+VX+X^2L\LK X\RK$. We assert that $R$ is a local divisorial half-factorial Cohen-Kaplansky domain such that $\mathcal{I}(R)$ is not half-factorial.

\begin{proof}
By \cite[Example 6.7]{An-Mo92} we have that $R$ is a half-factorial local Cohen-Kaplansky domain, $(1,y,y^2,y^3)$ is a $K$-basis of $L$ and $L=\{ab\mid a,b\in V\}$. Let $\mathfrak m=VX+X^2L\LK X\RK$ and let $I=\langle yX^2,(1+y^3)X^2\rangle_R$. Then $\mathfrak m$ is the maximal ideal of $R$. Note that $\mathfrak m^{-1}=(\mathfrak m\DP\mathfrak m)=\{f\in L\LK X\RK\mid f(V+XL\LK X\RK)\subset V+XL\LK X\RK\}=\{f\in L\LK X\RK\mid f_0V\subset V\}=K+XL\LK X\RK=\langle 1,y^3X\rangle_R$, and hence $R$ is divisorial by \cite[Theorem 3.8]{Ma68a}. By Proposition~\ref{4.2}.2 it is sufficient to show that $I\in\mathcal{A}(\mathcal{I}(R))$ and $I\subset\mathfrak m^2$. Observe that $\mathfrak m^2=X^2L\LK X\RK$, $\mathfrak m^3=X^3L\LK X\RK$ and $I=\{0,y,1+y^3,1+y+y^3\}X^2+X^3L\LK X\RK$. Therefore, $\mathfrak m^3\subset I\subset\mathfrak m^2$. For every ideal $E$ of $R$ let $S(E)=\{a\in V\mid aX+z\in E$ for some $z\in\mathfrak m^2\}$ and $T(E)=\{a\in L\mid aX^2+z\in E$ for some $z\in\mathfrak m^3\}$. Note that $S(E)$ is a $K$-subspace of $V$ and $T(E)$ is a $K$-subspace of $L$.

\smallskip
Claim: If $A$ and $B$ are proper ideals of $R$, then $T(AB)=(S(A)S(B))_K$.

Let $A$ and $B$ be proper ideals of $R$. First let $a\in T(AB)$. Then $aX^2+z\in AB$ for some $z\in\mathfrak m^3$. Therefore, $aX^2+z=\sum_{i=1}^n f_ig_i$ for some $n\in\N$, $f_i\in A$ and $g_i\in B$ for every $i\in [1,n]$. Since $A,B\subset\mathfrak m$, there are some $a_i,b_i\in V$ and $z_i,v_i\in\mathfrak m^2$ for every $i\in [1,n]$ such that $f_i=a_iX+z_i$ and $g_i=b_iX+v_i$ for every $i\in [1,n]$. Consequently, $a_i\in S(A)$ and $b_i\in S(B)$ for all $i\in [1,n]$. Moreover, $aX^2+z=(\sum_{i=1}^n a_ib_i)X^2+\sum_{i=1}^n (a_iv_iX+b_iz_iX+z_iv_i)$. Since $\sum_{i=1}^n (a_iv_iX+b_iz_iX+z_iv_i)\in\mathfrak m^3$ this implies that $a=\sum_{i=1}^n a_ib_i\in (S(A)S(B))_K$.

Now let $a\in (S(A)S(B))_K$. Then $a=\sum_{i=1}^n a_ib_i$ with $n\in\N$ and $a_i\in S(A)$ and $b_i\in S(B)$ for every $i\in [1,n]$. There are some $z_i,v_i\in\mathfrak m^2$ for every $i\in [1,n]$ such that $a_iX+z_i\in A$ and $b_iX+v_i\in B$ for every $i\in [1,n]$. Therefore, $aX^2+\sum_{i=1}^n (a_iv_iX+b_iz_iX+z_iv_i)=\sum_{i=1}^n (a_iX+z_i)(b_iX+v_i)\in AB$. Since $\sum_{i=1}^n (a_iv_iX+b_iz_iX+z_iv_i)\in\mathfrak m^3$, we have that $a\in T(AB)$. This proves the claim.

Assume that $I\not\in\mathcal{A}(\mathcal{I}(R))$. Then there are proper ideals $A$ and $B$ of $R$ such that $I=AB$. It follows by the claim that $\{0,y,1+y^3,1+y+y^3\}=T(I)=(S(A)S(B))_K$. Clearly, $\dim_K(S(A)),\dim_K(S(B))>0$. If $\dim_K(S(A))=\dim_K(S(B))=1$, then $|(S(A)S(B))_K|=2$, a contradiction. Therefore, $\dim_K(S(A))\geq 2$ or $\dim_K(S(B))\geq 2$. Without restriction let $\dim_K(S(A))\geq 2$. There are some non-zero $a\in S(B)$ and some two-dimensional $K$-subspace $W$ of $S(A)$. We infer that $(S(A)S(B))_K\supset aW$ and $4=|(S(A)S(B))_K|\geq |aW|=|W|=4$, and thus $\{0,y,1+y^3,1+y+y^3\}=aW$. Clearly, $a\in\{1,y,1+y,y^2,1+y^2,y+y^2,1+y+y^2\}$. To obtain a contradiction it is sufficient to show that $W\not\subset V$.

\smallskip
Case 1: $a=1$. Then $W=\{0,y,1+y^3,1+y+y^3\}\not\subset V$.

\smallskip
Case 2: $a=y$. Then $W=(1+y^3)\{0,y,1+y^3,1+y+y^3\}=\{0,1,1+y^2+y^3,y^2+y^3\}\not\subset V$.

\smallskip
Case 3: $a=1+y$. Then $W=(y+y^2+y^3)\{0,y,1+y^3,1+y+y^3\}=\{0,1+y+y^2+y^3,1+y+y^2,y^3\}\not\subset V$.

\smallskip
Case 4: $a=y^2$. Then $W=(1+y^2+y^3)\{0,y,1+y^3,1+y+y^3\}=\{0,1+y^3,1+y+y^2+y^3,y+y^2\}\not\subset V$.

\smallskip
Case 5: $a=1+y^2$. Then $W=(1+y+y^3)\{0,y,1+y^3,1+y+y^3\}=\{0,1+y^2,y^2+y^3,1+y^3\}\not\subset V$.

\smallskip
Case 6: $a=y+y^2$. Then $W=(1+y+y^2)\{0,y,1+y^3,1+y+y^3\}=\{0,y+y^2+y^3,y+y^3,y^2\}\not\subset V$.

\smallskip
Case 7: $a=1+y+y^2$. Then $W=(y+y^2)\{0,y,1+y^3,1+y+y^3\}=\{0,y^2+y^3,1+y,1+y+y^2+y^3\}\not\subset V$.
\end{proof}
\end{remark}

\smallskip
\begin{remark}[{\bf Transfer Krull does not imply stability}]\label{5.15}
If $R$ is an order in a Dedekind domain with $(R\DP\overline R)\in\max(R)$ and $\overline R=R {\overline R}^{\times}$, then $R^{\bullet}$ is transfer Krull by \cite[Proposition 3.7.5]{Ge-HK06a}. We provide an example showing that such an order need not be stable.

We construct a seminormal one-dimensional local noetherian domain $R$ such that $\overline{R}=R\overline{R}^{\times}$, $(R\DP\overline{R})\in\max(\overline{R})$, $\overline{R}$ is local, and $R$ has ideals which are not $2$-generated. Thus Corollary~\ref{3.8} implies that $R$ is not stable.

Let $K\subset L$ be a field extension with $3\leq [L\DP K] <\infty$, $X$ be an indeterminate over $L$, and $R=K+XL\LK X\RK$. Observe that $\widehat{R}=L\LK X\RK$ is a completely integrally closed one-dimensional noetherian domain. Let $B\subset L$ be a $K$-basis of $L$. Then $\widehat{R}=\langle B\rangle_R$, and hence $\widehat{R}$ is a finitely generated $R$-module. Since $\widehat{R}$ is noetherian, it follows from the Theorem of Eakin-Nagata that $R$ is noetherian, and hence $\overline{R}=\widehat{R}$.
	
Since $\overline{R}$ is one-dimensional local and $R\subset\overline{R}$ is an integral extension, we have that $R$ itself is local and one-dimensional. Moreover, $\spec(R)=\{\{0\},XL\LK X\RK\}$ and $R$ is transfer Krull by \cite[Theorem 5.8]{Ge-Ka-Re15a}.
	
Now if $x\in\mathsf q(R)$ with $x^2,x^3\in R$, then $x^2,x^3\in\widehat{R}$, and hence $x\in\widehat{R}$ (since $\widehat{R}$ is completely integrally closed), so $x_{0}\in L$ and $x_{0}^2,x_{0}^3\in K$ (whence $x_{0}$ is the constant term of $x$), and thus if $x_{0}=0$, then $x\in R$ and if $x_{0}\neq 0$, then $x_{0}=x_{0}^3x_{0}^{-2}\in K$, hence $x\in R$. Therefore, $R$ is seminormal.
	
Note that $(R\DP\overline{R})=(R\DP\widehat{R})=XL\LK X\RK\neq\{0\}$ and $(R\DP\overline{R})\in\max(\overline{R})$. It is clear that $R\overline{R}^{\times}\subset\overline{R}$. Let $y\in\overline{R}=L\LK X\RK$. We have that $y-y_0\in XL\LK X\RK$ (where $y_0$ is the constant term of $y$). If $y\in R$, then clearly $y\in R\overline{R}^{\times}$. Now let $y\not\in R$. Then $y_0\not\in K$, and hence $y_0\not=0$. Observe that $(y-y_0)y_0^{-1}\in XL\LK X\RK$ and $y_0\in L^{\times}\subset\overline{R}^{\times}$. Therefore, $y=(1+(y-y_0)y_0^{-1})y_0\in R\overline{R}^{\times}$.
	
Assume to the contrary, that $XL\LK X\RK$ is $2$-generated. Therefore, there exist $x,y\in R$ with $XL\LK X\RK=\langle x,y\rangle_R$. Let $x_{1},y_{1}$ be the linear coefficients of $x$ respectively $y$. Then $L=\langle x_{1},y_{1}\rangle_K$ and hence $[L\DP K]\leq 2$, a contradiction.
\end{remark}

\medskip
\noindent
{\bf Acknowledgements.} We would like to thank the anonymous referee for many valuable suggestions and comments which improved the quality of this paper.

\providecommand{\bysame}{\leavevmode\hbox to3em{\hrulefill}\thinspace}
\providecommand{\MR}{\relax\ifhmode\unskip\space\fi MR}
\providecommand{\MRhref}[2]{\href{http://www.ams.org/mathscinet-getitem?mr=#1}{#2}}
\providecommand{\href}[2]{#2}


\begin{thebibliography}{10}

\bibitem{An-An-Za92b}
D.D.~Anderson, D.F.~Anderson, and M.~Zafrullah, \emph{Atomic domains in which almost all atoms are prime}, Commun. Algebra \textbf{20} (1992), 1447 -- 1462.

\bibitem{An-Hu-Pa87}
D.D.~Anderson, J.A.~Huckaba, and I.J.~Papick, \emph{A note on stable domains}, Houston J. Math. \textbf{13} (1987), no.~1, 13 -- 17.

\bibitem{An-Mo-Za92}
D.D.~Anderson, J.~Mott, and M.~Zafrullah, \emph{Finite character representations for integral domains}, Boll. Unione Mat. Ital. \textbf{6} (1992), 613 -- 630.

\bibitem{An-Mo92}
D.D.~Anderson and J.L.~Mott, \emph{Cohen-{K}aplansky domains{\rm \,:} integral domains with a finite number of irreducible elements}, J. Algebra \textbf{148} (1992), 17 -- 41.

\bibitem{Ba-Sm21}
N.R.~Baeth and D.~Smertnig, \emph{Lattices over {B}ass rings and graph agglomerations}, Algebras and Representation Theory, \href{https://doi.org/10.1007/s10468-021-10040-2}{https://doi.org/10.1007/s10468-021-10040-2}.

\bibitem{Ba63a}
H.~Bass, \emph{On the ubiquity of {G}orenstein rings}, Math. Z. \textbf{82} (1963), 8 -- 28.

\bibitem{Ba00a}
S.~Bazzoni, \emph{Divisorial domains}, Forum Math. \textbf{12} (2000), 397 -- 419.

\bibitem{Ba-Sa96a}
S.~Bazzoni and L.~Salce, \emph{Warfield domains}, J. Algebra \textbf{185} (1996), 836 -- 868.

\bibitem{Br-Ge-Re20}
J.~Brantner, A.~Geroldinger, and A.~Reinhart, \emph{On monoids of ideals of orders in quadratic number fields}, in Advances in Rings, Modules, and Factorizations, vol. 321, Springer, 2020, pp.~11 -- 54.

\bibitem{Do-Fo87}
D.E.~Dobbs and M.~Fontana, \emph{Seminormal rings generated by algebraic integers}, Mathematika \textbf{34} (1987), 141 -- 154.

\bibitem{Fa-Ge-Ka-Tr17}
Y.~Fan, A.~Geroldinger, F.~Kainrath, and S.~Tringali, \emph{Arithmetic of commutative semigroups with a focus on semigroups of ideals and modules}, J. Algebra Appl. \textbf{16} (2017), 1750234, 42.

\bibitem{Ga06a}
S.~Gabelli, \emph{Generalized {D}edekind domains}, Multiplicative ideal theory in commutative algebra, Springer, New York, 2006, pp.~189 -- 206.

\bibitem{Ga14a}
\bysame, \emph{Ten problems on stability of domains}, Commutative {A}lgebra, Springer, New York, 2014, pp.~175 -- 193.

\bibitem{Ga-Ro19a}
S.~Gabelli and M.~Roitman, \emph{On finitely stable domains {I}}, J. Commut. Algebra \textbf{11} (2019), 49 -- 67.

\bibitem{Ga-Ro19b}
\bysame, \emph{On finitely stable domains {II}}, J. Commut. Algebra \textbf{12} (2020), 179 -- 198.

\bibitem{G-L-T-Z21}
W.~Gao, C.~Liu, S.~Tringali, and Q.~Zhong, \emph{On half-factoriality of transfer {K}rull monoids}, Commun. Algebra \textbf{49} (2021), 409 -- 420.

\bibitem{GS-ON-We19}
P.A.~Garc\'{\i}a-S\'{a}nchez, C~O'Neill, and G.~Webb, \emph{The computation of factorization invariants for affine semigroups}, J. Algebra Appl. \textbf{18} (2019), 1950019, 21.

\bibitem{Ge16c}
A.~Geroldinger, \emph{Sets of lengths}, Amer. Math. Monthly \textbf{123} (2016), 960 -- 988.

\bibitem{Ge-Go-Tr20}
A.~Geroldinger, F.~Gotti, and S.~Tringali, \emph{On strongly primary monoids, with a focus on {P}uiseux monoids}, J. Algebra \textbf{567} (2021), 310 -- 345.

\bibitem{Ge-Gr-Sc11a}
A.~Geroldinger, D.J.~Grynkiewicz, and W.A.~Schmid, \emph{The catenary degree of {K}rull monoids {I}}, J. Th{\'e}or. Nombres Bordx. \textbf{23} (2011), 137 -- 169.

\bibitem{Ge-HK06a}
A.~Geroldinger and F.~Halter-Koch, \emph{Non-{U}nique {F}actorizations. {A}lgebraic, {C}ombinatorial and {A}nalytic {T}heory}, Pure and Applied Mathematics, vol. 278, Chapman \& Hall/CRC, 2006.

\bibitem{G-HK-H-K03}
A.~Geroldinger, F.~Halter-Koch, W.~Hassler, and F.~Kainrath, \emph{Finitary monoids}, Semigroup Forum \textbf{67} (2003), 1 -- 21.

\bibitem{Ge-Ha-Le07}
A.~Geroldinger, W.~Hassler, and G.~Lettl, \emph{On the arithmetic of strongly primary monoids}, Semigroup Forum \textbf{75} (2007), 567 -- 587.

\bibitem{Ge-Ka-Re15a}
A.~Geroldinger, F.~Kainrath, and A.~Reinhart, \emph{Arithmetic of seminormal weakly {K}rull monoids and domains}, J. Algebra \textbf{444} (2015), 201 -- 245.

\bibitem{Ge-Re19d}
A.~Geroldinger and A.~Reinhart, \emph{The monotone catenary degree of monoids of ideals}, Internat. J. Algebra Comput. \textbf{29} (2019), 419 -- 457.

\bibitem{Ge-Ro20a}
A.~Geroldinger and M.~Roitman, \emph{On strongly primary monoids and domains}, Commun. Algebra \textbf{48} (2020), 4085 -- 4099.

\bibitem{Ge-Zh18a}
A.~Geroldinger and Q.~Zhong, \emph{Long sets of lengths with maximal elasticity}, Can. J. Math. \textbf{70} (2018), 1284 -- 1318.

\bibitem{Ge-Zh20a}
\bysame, \emph{Factorization theory in commutative monoids}, Semigroup Forum \textbf{100} (2020), 22 -- 51.

\bibitem{Go98a}
H.P.~Goeters, \emph{Noetherian stable domains}, J. Algebra \textbf{202} (1998), no.~1, 343 -- 356.

\bibitem{Gr82a}
C.~Greither, \emph{On the two generator problem for the ideals of a one-dimensional ring}, J. Pure Appl. Algebra \textbf{24} (1982), 265 -- 276.

\bibitem{HK98}
F.~Halter-Koch, \emph{Ideal {S}ystems. {A}n {I}ntroduction to {M}ultiplicative {I}deal {T}heory}, Marcel Dekker, 1998.

\bibitem{HK11b}
\bysame, \emph{Multiplicative ideal theory in the context of commutative monoids}, in Commutative {A}lgebra: {N}oetherian and {N}on-{N}oetherian {P}erspectives, Springer, 2011, pp.~203 -- 231.

\bibitem{HK13a}
\bysame, \emph{Quadratic {I}rrationals}, Pure and Applied Mathematics, vol. 306, Chapman \& Hall/CRC, 2013.

\bibitem{HK20a}
\bysame, \emph{An {I}nvitation to {A}lgebraic {N}umbers and {A}lgebraic {F}unctions}, Pure and Applied Mathematics, CRC Press, Boca Raton, FL, 2020.

\bibitem{Ha09c}
W.~Hassler, \emph{Properties of factorizations with successive lengths in one-dimensional local domains}, J. Commut. Algebra \textbf{1} (2009), 237 -- 268.

\bibitem{Ka05b}
F.~Kainrath, \emph{On local half-factorial orders}, in Arithmetical {P}roperties of {C}ommutative {R}ings and {M}onoids, Lect. Notes Pure Appl. Math., vol. 241, Chapman \& Hall/CRC, 2005, pp.~316 -- 324.

\bibitem{Li71a}
J.~Lipman, \emph{Stable ideals and Arf rings}, Amer. J. Math \textbf{93} (1971), 649 -- 685.

\bibitem{Lo06}
K.A.~Loper, \emph{Almost Dedekind domains which are not Dedekind}, Multiplicative ideal theory in commutative algebra, Springer, 2006, pp.~279 -- 292.

\bibitem{Ma68a}
E.~Matlis, \emph{Reflexive domains}, J. Algebra \textbf{8} (1968), 1 -- 33.

\bibitem{Ma70a}
\bysame, \emph{The two-generator problem for ideals}, Michigan Math. J. \textbf{17} (1970), 257 -- 265.

\bibitem{Ol98a}
B.~Olberding, \emph{Globalizing local properties of {P}r{\"u}fer domains}, J. Algebra \textbf{205} (1998), 480 -- 504.

\bibitem{Ol01b}
\bysame, \emph{On the classification of stable domains}, J. Algebra \textbf{243} (2001), 177 -- 197.

\bibitem{Ol01c}
\bysame, \emph{Stability, duality, 2-generated ideals and a canonical decomposition of modules}, Rend. Sem. Mat. Univ. Padova \textbf{106} (2001), 261 -- 290.

\bibitem{Ol01a}
\bysame, \emph{Stability of ideals and its applications}, Ideal theoretic {M}ethods in {C}ommutative {A}lgebra (D.D. Anderson and I.J. Papick, eds.), Lect. Notes Pure Appl. Math., vol. 220, Marcel Dekker, 2001, pp.~319 -- 341.

\bibitem{Ol02a}
\bysame, \emph{On the structure of stable domains}, Commun. Algebra \textbf{30} (2002), 877 -- 895.

\bibitem{Ol14a}
\bysame, \emph{Finitely stable rings}, Commutative algebra, Springer, New York, 2014, pp.~269 -- 291.

\bibitem{Ol14b}
\bysame, \emph{One-dimensional bad {N}oetherian domains}, Trans. Amer. Math. Soc. \textbf{366} (2014), no.~8, 4067 -- 4095.

\bibitem{Ol16a}
\bysame, \emph{One-dimensional stable rings}, J. Algebra \textbf{456} (2016), 93 -- 122.

\bibitem{Ol-Re20a}
B.~Olberding and A.~Reinhart, \emph{Radical factorization in finitary ideal systems}, Commun. Algebra \textbf{48} (2020), 228 -- 253.

\bibitem{Om12a}
M.~Omidali, \emph{The catenary and tame degree of numerical monoids generated by generalized arithmetic sequences}, Forum Math. \textbf{24} (2012), 627 -- 640.

\bibitem{ON-Pe17a}
C.~O'Neill and R.~Pelayo, \emph{Factorization invariants in numerical monoids}, Algebraic and geometric methods in discrete mathematics, Contemp. Math., vol. 685, Amer. Math. Soc., Providence, RI, pp.~231 -- 249.

\bibitem{ON-Pe18a}
\bysame, \emph{Realisable sets of catenary degrees of numerical monoids}, Bull. Australian Math. Soc. \textbf{97} (2018), 240 -- 245.

\bibitem{Ph12b}
A.~Philipp, \emph{A precise result on the arithmetic of non-principal orders in algebraic number fields}, J. Algebra Appl. \textbf{11}, 1250087, 42pp.

\bibitem{Ph15a}
\bysame, \emph{A characterization of arithmetical invariants by the monoid of relations {II}: The monotone catenary degree and applications to semigroup rings}, Semigroup Forum \textbf{90} (2015), 220 -- 250.

\bibitem{Qu76a}
J.~Querr\'{e}, \emph{Intersections d'anneaux int\`egres}, J. Algebra \textbf{43} (1976), no.~1, 55 -- 60.

\bibitem{Sa-Va73}
J.D.~Sally and W.V.~Vasconcelos, \emph{Stable rings and a problem of {B}ass}, Bull. Amer. Math. Soc. \textbf{79} (1973), 574 -- 576.

\bibitem{Tr19a}
S.~Tringali, \emph{Structural properties of subadditive families with applications to factorization theory}, Israel J. Math. \textbf{234} (2019), 1 -- 35.
\end{thebibliography}
\end{document}